\newtheorem{thm}{Theorem}[section]
\newtheorem{cor}[thm]{Corollary}
\newtheorem{prop}[thm]{Proposition}
\newtheorem{lem}[thm]{Lemma}
\newtheorem*{rem*}{Remark}
\theoremstyle{definition}
\newtheorem{defn}[thm]{Definition}
\newtheorem{rem}[thm]{Remark}
\numberwithin{equation}{section}
\newcommand{\interior}[1]{%
  {\kern0pt#1}^{\mathrm{o}}%
}
\begin{document}

\title{Dihedral branched covers of four-manifolds}

\author{Alexandra Kjuchukova}\footnote{University of Wisconsin--Madison, Department of Mathematics, 480 Lincoln Drive, Madison, WI 53706, USA}
\email{kjuchukova@wisc.edu}

 \subjclass[2010]{Primary 57M12; Secondary 57M27}
 
\begin{abstract}

Given a closed oriented PL four-manifold $X$ and a closed surface
$B$ embedded in $X$ with isolated cone singularities, we give a formula
for the signature of an irregular dihedral cover of $X$ branched along
$B$. For $X$ simply-connected, we deduce a necessary condition on the
intersection form of a simply-connected irregular dihedral branched
cover of $(X, B)$. When the singularities on $B$ are two-bridge slice,
we prove that the necessary condition on the intersection form of the
cover is sharp. For $X$ a simply-connected PL four-manifold with
non-zero second Betti number, we construct infinite families of
simply-connected PL manifolds which are irregular dihedral branched
coverings of $X$. Given two four-manifolds $X$ and $Y$ whose
intersection forms are odd, we obtain a necessary and sufficient
condition for $Y$ to be homeomorphic to an irregular dihedral $p$-fold
cover of $X$, branched over a surface with a two-bridge slice
singularity.
\end{abstract}

\maketitle

\section{Introduction}

The classification of all branched covers over a given base is a subject
dating back to Alexander, who proved that every closed orientable PL
$n$-manifold is a PL branched cover of $S^{n}$
\cite{alexander1920note}. Alexander's branching sets are PL subcomplexes
of the sphere; he concludes little else about them. Since 1920, the
natural question of how complicated the branching set needs to be, and
how many sheets are needed, in order to realize all manifolds in a given
dimension as branched covers of the sphere, has received much interest -- see, for instance,~\cite{berstein1978degree} and references therein.
It is a famous theorem in dimension 3 that three-fold dihedral covers
branched along knots suffice \cite{hilden1974every},
\cite{hirsch140offene}, \cite{montesinos1974representation}. The
question is considerably more subtle in dimension four. Piergallini and
Iori, among others, have studied the minimal degree needed to realize
all closed oriented PL four-manifolds as covers of the sphere. The
branching sets they consider are either immersed PL submanifolds with
transverse self-intersections or embedded and non-singular PL surfaces.
Piergallini proved in \cite{piergallini1995four} that every closed
oriented PL four-manifold is a four-fold cover of $S^{4}$ branched over
a over a transversally immersed PL surface. He and Iori later refined
this result to show in \cite{iori2002} that singularities can be
removed by stabilizing to a five-fold cover. In light of these universal
realization theorems, one might wish for equally general methods for
obtaining explicit descriptions of the branching sets needed to realize
particular PL four-manifolds as a five-fold covers of~$S^{4}$. It would
also be of interest to better understand the trade-off between
simplifying the branching set and increasing the degree of a cover. Most
recently, Piergallini and Zuddas~\cite{piergallini2016branched} showed
that closed oriented \emph{topological} four-manifolds are also
five-fold covers of the sphere, if one allows for ``wild'' branching sets
with potentially very pathological topology near isolated points. Still,
the complexity of the branching sets near the wild points retains an air
of mystery.

We assume a complementary approach, taking the point of view of studying
all possible covers over a given base $X$ \emph{in terms of the
branching set} and its embedding into the base. As seen from the main
theorem of \cite{viro1984signature}, if $Y$ is a cover of
$S^{4}$ branched over a closed oriented non-singular embedded surface,
then the signature of $Y$ must be zero. Thus, for example, the existing
results on five-fold covers of the four-sphere implicitly make use of
nonorientable branching sets. In contrast, the constructions presented
here make use of branching sets that are oriented surfaces, embedded in
the base piecewise linearly except for finitely many cone singularities.
These ideas have led to new examples of branched covers of $S^{4}$ and
applications to the Slice--Ribbon Conjecture~\cite{cahn2017singular}.
We work with irregular dihedral covers ({Definition~\ref{def:dihedral}}),
which constitute the most direct generalization of the three-dimensional
results of Hilden, Hirsch and Montesinos as well as four-dimensional
results of Montesinos \cite{montesinos1978}. Dihedral covers are also
the ``simplest'' three-fold covers which give rise to interesting examples where the branching sets are singularly embedded (see {Remark~\ref{cyclic-covers}}).

Our results are not restricted to branched covers of the sphere but
apply to any closed oriented four-manifold base. Given an irregular
dihedral branched cover $f: Y\to X$ between two simply-connected
oriented four-manifolds $X$ and $Y$, we relate the intersection forms
of $X$ and $Y$ via $f$. Singularities for us play a central role, and
we compute the signature of a branched cover in terms of data about
the branching set and its singularity.

We begin by defining the type of covers and singularities considered.
Throughout, $D_{p}$~denotes the dihedral group of order $2p$ and $p$ is
odd.

\begin{defn}
\label{def:dihedral}
Let $f: Y\to X$ be a branched cover with branching set $B\subset X$. If
the unbranched cover $f_{|f^{-1}(X-B)}$ corresponds under the classification of covering spaces to $\phi ^{-1}(
\mathbb{Z}/2\mathbb{Z})$ for some surjective homomorphism $\phi : \pi
_{1}(X-B, x_{0})\to D_{p}$, we say that $f$ is an
\textit{irregular dihedral branched cover} of~$X$.
\end{defn}

Put differently, $\phi $ is the monodromy representation of the
unbranched cover associated to $f$ and meridians of the branching set
$B$ map to reflections in the dihedral group $D_{p}$ (thought of as a
subgroup of the symmetric group $S_{p}$). In particular, the existence
of a dihedral cover over a pair $(X, B)$ is a condition on the
fundamental group of the complement of $B$ in $X$. When a (connected)
dihedral cover over the pair $(S^{3}, \alpha )$ exists for some knot
$\alpha $, we say simply that $\alpha $ admits a dihedral cover.

It is helpful to give a description of the pre-images of a point on the
branching set $B$ of an irregular dihedral cover $f: Y\to X$. The
covering space $Y$ is a $\mathbb{Z}/2\mathbb{Z}$ quotient of the
$2p$-fold \emph{regular} dihedral cover $Z$ corresponding to the kernel
of the homomorphism~$\phi $ in {Definition~\ref{def:dihedral}}. For every
locally flat point $b\in B$ the pre-image $f^{-1} (D_{b})$ of a small
neighborhood $D_{b}$ of $b$ in $X$ contains $\frac{p-1}{2}$ components
of branching index $2$ and one component of branching index $1$. The
index 1 component is the fixed set of the involution $Z\to Z$.

\begin{defn}
\label{def:sing}
Let $X$ be a four-manifold and let $B$ be a closed surface embedded in
$X$. Let $\alpha \subset S^{3}$ be a non-trivial knot. For a given point
$z\in B$, assume there exist a small open disk $D_{z}$ about $z$ in
$X$ such that there is a homeomorphism of pairs $(D_{z} - z, B-z)
\cong (S^{3}\times (0, 1), \alpha \times (0, 1))$. We say the
embedding of $B$ in $X$ has a \textit{singularity of type $\alpha $} at
$z$.
\end{defn}

In other words, the knot $\alpha $ is the link of the singularity of
$B$ at $z$. For the covers considered we assume in addition that
singularity is normal, meaning that the pre-image of $z$ under the covering map is a single point. The presence of a singularity $\alpha $ on the branching set $B$ results in a defect, or correction term, to the signature of the covering manifold. While this defect depends only on $\alpha $, it is computed with the help of an associated knot to $\alpha $, defined below.
 
\begin{defn}
\label{defn:chark}
Let $\alpha \subset S^{3}$ and $\beta \subset S^{3}$ be two knot types.
We say that $\beta $ is a \textit{mod $p$ characteristic knot} for
$\alpha $ if there exists a Seifert surface $V$ for $\alpha $ with
Seifert form $L$ such that $\beta \subset V^{\circ }\subset S^{3}$
represents a non-zero primitive class in $H_{1}(V; \mathbb{Z})$ and
$(L+L^T)\beta\equiv 0\text{~mod~}p$. 
\end{defn}

In~\cite{CS1984linking} Cappell and Shaneson defined characteristic
knots and proved that for $p$ a positive odd square-free integer and
$\alpha $ a non-trivial knot, $\alpha $ admits an irregular dihedral
$p$-fold cover if an only if there exists a knot $\beta $ which is a mod~$p$
characteristic knot for $\alpha $. Furthermore, they gave an
explicit construction of a cobordism, here denoted $W(\alpha , \beta
)$, between a dihedral $p$-fold branched cover of $\alpha $ and a cyclic
$p$-fold branched cover of $\beta $. We recall this construction as
needed in the proof of {Proposition~\ref{prop:signatureW}}.

Throughout this article we adopt the following notation. Let
$\chi $ denote the Euler characteristic and $\sigma $ the signature of
a manifold, and let $e$ be the self-intersection number of an embedded
closed submanifold. Given a positive odd integer $p$ and a knot
$\alpha $ in $S^{3}$ which admits an irregular dihedral $p$-fold cover,
denote by $V$ a Seifert surface for $\alpha $ with symmetrized Seifert
pairing $L_{V} : = L + L^{T}$. Let
$\beta \subset V^{\circ }$ be a mod~$p$ characteristic knot for
$\alpha $. Finally, denote by $\sigma _{\zeta ^{i}}$ the Tristram--Levine
$\zeta ^{i}$-signature of a knot \cite{tristram1969some}, where
$\zeta $ is a primitive $p$-th root of unity.

Our first theorem is a necessary condition for the existence of a
$p$-fold irregular dihedral cover $f: Y\to X$ between two four-manifolds
$X$ and $Y$, with a specified embedded surface $B\subset X$ as its
branching set.

\begin{thm}[Necessary condition]\label{thm:necessary}
Let $X$ and $Y$ be closed oriented PL four-manifolds and let $p$ be an
odd prime. Let $B\subset X$ be a closed connected surface, PL-embedded
in $X$ except for an isolated singularity $z$ of type $\alpha $. If
there exists an irregular dihedral $p$-fold cover $f: Y\rightarrow X$
branched along $B$ with a normal singularity at $z$, then the knot
$\alpha $ admits an irregular dihedral $p$-fold cover and this cover is
$S^{3}$. Furthermore, given any corresponding (see footnote~\ref{foot1})
$\mathrm{mod}~p$ characteristic knot $\beta $ for $\alpha $, the following
formulas hold:
\begin{equation}
\label{eq:chi}
\chi (Y)=p\chi (X)-\frac{p-1}{2}\chi (B)-\frac{p-1}{2},
\end{equation}
and
\begin{equation}
\label{eq:sigma}
\sigma (Y)=p\sigma (X) -\frac{p-1}{4}e(B) - \Xi _{p}(\alpha ),
\end{equation}
where
\begin{equation}
\label{eq:xi-def}
\Xi _{p}(\alpha )= \frac{p^{2}-1}{6p}L_{V}(\beta , \beta ) + \sigma (W(
\alpha , \beta )) + \sum _{i=1}^{p-1} \sigma _{\zeta ^{i}}(\beta ).
\end{equation}

\end{thm}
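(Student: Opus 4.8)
The plan is to analyze the branched cover locally near the singularity and globally on the complement, then splice the two computations together via the Cappell--Shaneson cobordism $W(\alpha,\beta)$. First I would establish the \emph{local} statement: restricting $f$ to the preimage of the cone neighborhood $D_z$ of the singular point, we get a branched cover of $D_z \cong \mathrm{cone}(S^3)$ whose restriction to the boundary $S^3$ is a $p$-fold irregular dihedral cover of $(S^3,\alpha)$. Since $Y$ is a manifold and the singularity is assumed normal, the preimage of the cone point is a single manifold point, so a neighborhood of it is a cone on the boundary $3$-manifold; as $Y$ is a manifold, that boundary $3$-manifold must be $S^3$. This forces $\alpha$ to admit an irregular dihedral $p$-fold cover and identifies that cover as $S^3$, giving the first assertion of the theorem. (Here I use the Cappell--Shaneson result, recalled in the excerpt, that existence of such a cover is equivalent to existence of a mod~$p$ characteristic knot $\beta$.)

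Next I would compute the Euler characteristic. Decompose $X = (X \setminus \mathring{D}_z) \cup D_z$ and pull back via $f$. Over $X \setminus \mathring{D}_z$ the branching set is the honest PL surface $B \setminus \mathring{D}_z$, and there the multiplicativity of $\chi$ for branched covers applies with the standard correction: each sheet contributes $\chi$ of the base minus the branched locus, plus the contributions of the $\frac{p-1}{2}$ double-branched components and one unbranched component over each point of $B$. Summing, and accounting for the cone piece where the preimage of $D_z$ is a single cone (contributing $\chi = 1$, versus $p$ for an honest $p$-fold cover of a disk, a deficit bookkept against $\frac{p-1}{2}$), yields equation~\eqref{eq:chi}. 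This is a direct count and I would present it compactly rather than belabor it.

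The substantive part is the signature formula~\eqref{eq:sigma}. I would build a $5$-manifold cobounding an appropriate combination of pieces and apply Novikov additivity together with the $G$-signature theorem. Concretely: the regular $2p$-fold cover $Z \to X$ restricted over $X \setminus \mathring{D}_z$ has computable equivariant signature contributions from the surface branch locus, which after taking the $\mathbb{Z}/2$ quotient produce the term $p\,\sigma(X) - \frac{p-1}{4} e(B)$ --- this is the Viro--type signature computation for dihedral covers branched along a PL surface (cf.~\cite{viro1984signature}), applied on the complement of the cone. The singular point contributes a correction equal to the signature defect of the local cover of $(S^3, \alpha)$; this is exactly where the associated characteristic knot $\beta$ and the cobordism $W(\alpha,\beta)$ enter. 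Using $W(\alpha,\beta)$, whose signature is analyzed in Proposition~\ref{prop:signatureW}, one trades the dihedral $p$-fold branched cover of $\alpha$ for the cyclic $p$-fold branched cover of $\beta$; the signature of the latter is $\sum_{i=1}^{p-1}\sigma_{\zeta^i}(\beta)$ by the classical formula, and the linking-form term $\frac{p^2-1}{6p} L_V(\beta,\beta)$ arises from the $G$-signature contribution of the fixed-point data of the cyclic action (a lens-space-type $\eta$-invariant / Dedekind-sum computation). Assembling these via Novikov additivity gives $\Xi_p(\alpha)$ as the total local defect, hence~\eqref{eq:sigma}.

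The main obstacle I anticipate is making the gluing rigorous: one must check that the signature contributions from the complementary piece and the cone piece are genuinely additive, which requires controlling the signature of the intermediate $3$-manifold boundaries (they are $S^3$ or branched covers thereof, so their signatures vanish, but this needs the normality hypothesis and the identification of the local cover as $S^3$). A secondary technical point is verifying independence of the formula from the choice of characteristic knot $\beta$ and Seifert surface $V$ --- the statement asserts the formula holds for \emph{any} corresponding $\beta$, so one must show $\Xi_p(\alpha)$ is well-defined, i.e.\ the $\beta$-dependent terms conspire to cancel their variation. I expect this invariance to follow from the fact that the left-hand side $\sigma(Y)$ does not see $\beta$, combined with a cobordism argument comparing two choices of $(V,\beta)$, but pinning it down cleanly is the delicate part of the write-up.
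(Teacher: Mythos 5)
Your first assertion (the local identification of the dihedral cover of $\alpha$ as $S^3$, forced by $Y$ being a manifold at the normal singular point) and the Euler characteristic count match the paper's proof. The signature argument, however, has a genuine gap at its center. You propose to apply "the Viro-type signature computation\dots on the complement of the cone" and read off $p\sigma (X)-\frac{p-1}{4}e(B)$ as the contribution of $f^{-1}(X-\mathring{D}_{z})$. But Viro's formula is a statement about closed branched covers of closed manifolds along closed embedded surfaces; it does not localize over pieces of the base. Novikov additivity only tells you $\sigma (Y)=\sigma (f^{-1}(X-\mathring{D}_{z}))+\sigma (\mathrm{cone}(S^{3}))=\sigma (f^{-1}(X-\mathring{D}_{z}))$, which merely relocates the problem: you must now compute the signature of a branched cover of a four-manifold with boundary, branched along a properly embedded surface with boundary, where neither $e(B)$ nor the closed-manifold formula is available without further structure. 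A relative version of your argument would require Atiyah--Patodi--Singer $\eta$-invariant corrections or Wall non-additivity terms, which you name but do not compute; as written, the step "the complement contributes $p\sigma (X)-\frac{p-1}{4}e(B)$ and the cone contributes $\Xi _{p}(\alpha )$" is an assertion, not a proof.

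The paper's device for closing this gap is what your sketch is missing. Step 1: splice the Cappell--Shaneson cobordism $Z=W(\alpha ,\beta )\cup _{\Sigma }Q$ into $Y$ in place of $f^{-1}(D_{z})$, producing a cover $Y_{1}\to X$ with $\sigma (Y_{1})=\sigma (Y)+\sigma (Z)$ whose branching set now has only a circle $\beta ^{\ast }$ of "standard" non-manifold points (locally $S^{1}\times \top $). Step 2 --- the key move --- double: form $X_{2}=(X-N(\beta ^{\ast }))\cup _{S^{1}\times S^{2}}(X-N(\beta ^{\ast }))$ and likewise $Y_{2}$, via an orientation-reversing identification, so that $\sigma (X_{2})=2\sigma (X)$ and $\sigma (Y_{2})=2(\sigma (Y)+\sigma (Z))$ while the doubled branching set becomes a genuinely PL-embedded closed surface with two components, one covered dihedrally and one cyclically. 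Viro's closed formula then applies verbatim; the self-intersections compute to $e(B_{2}')=2e(B)$ and $e(B_{2}'')=L_{V}(\beta ,\beta )$, and $\sigma (Z)=\sigma (W(\alpha ,\beta ))+\sum _{i=1}^{p-1}\sigma _{\zeta ^{i}}(\beta )$ by Novikov additivity together with Cappell--Shaneson's computation of $\sigma (Q)$. Unwinding gives Equation~(\ref{eq:sigma}); note that the coefficient $\frac{p^{2}-1}{6p}$ (half the usual cyclic-cover coefficient $\frac{p^{2}-1}{3p}$) is an artifact of the doubling, which any $\eta$-invariant route would need to reproduce. Your closing concern about independence of $\beta$ is handled exactly as you suggest, by comparing against the $\beta$-free left-hand side (Proposition~\ref{prop:invariant}), and is not where the difficulty lies.
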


\begin{rem}
The author believes that the above theorem as well as the rest of the
results of this paper extend to the case where $X$ and $Y$ are
topological four-manifolds.
\end{rem}

The main substance of this theorem is finding an expression for
$\Xi _{p}(\alpha )$, the contribution to the signature of $Y$ resulting
from the presence of a singularity of type $\alpha $ on the branching
set. Note that it is straightforward to compute $L_{V}(\beta , \beta
)$ and $ \sum _{i=1}^{p-1} \sigma _{\zeta ^{i}}(\beta )$ from diagrams of
$\alpha $ and $\beta $. A less obvious but essential feature of this
theorem is the fact that the term $\sigma (W(\alpha , \beta ))$ can be
expressed in terms of linking numbers of curves in a dihedral branched
cover of $\alpha $ (see {Proposition~\ref{prop:signatureW}}). A
combinatorial procedure for computing these linking numbers from a
diagram of $\alpha $ is described in Appendix~\ref{appB}, using techniques of
Perko~\cite{perko1964thesis}. This procedure was carried
out in~\cite{cahn2016linking} and implemented in Python.

It is clear from the definition of a characteristic knot that
$\beta $ is not uniquely determined by $\alpha $. While each of the
terms $\frac{p^{2}-1}{6p}L_{V}(\beta , \beta )$, $\sigma (W(\alpha ,
\beta ))$, and $\sum _{i=1}^{p-1} \sigma _{\zeta ^{i}}(\beta )$ depends on
$\beta $, we show in {Proposition~\ref{prop:invariant}} that their sum,
$\Xi _{p}(\alpha )$, is an invariant of $\alpha $ and thus independent
of the choice of characteristic knot.\footnote{Precisely, $\Xi _{p}(
\alpha )$ is an invariant of $\alpha $
\textit{together with a representation} of $\pi _{1}(S^{3} - \alpha , x
_{0})$ onto $D_{p}$. In a lot of cases, the latter is uniquely
determined by $\alpha $, up to the appropriate notion of equivalence.
To each equivalence class of dihedral representations of $\pi _{1}(S
^{3} - \alpha , x_{0})$ corresponds an equivalence class of mod~$p$
characteristic knots for $\alpha $, and $\beta $ can be chosen
arbitrarily within this class. See~\cite{CS1984linking}.\label{foot1}} The author
and Cahn develop a combinatorial method for computing $\Xi _{p}(\alpha
)$ from a Fox $p$-colored diagram of $\alpha $ and apply this method to
specific examples of two-bridge singularities
in~\cite{cahn2017singular}. They also show that, for $\alpha $ a slice
knot which arises as a singularity on a $p$-fold dihedral cover between
four-manifolds, $\Xi _{p}(\alpha )$ gives an obstruction to
$\alpha $ being homotopy ribbon. Precisely, if a slice singularity
$\alpha $ is in fact homotopy ribbon, then $|\Xi _{p}(\alpha )| \leq
\frac{p-1}{2}$ (Theorem~4 of~\cite{cahn2017singular}). Since ribbon
knots are homotopy ribbon, this means in particular that $\Xi _{p}(
\alpha )$ can be used to test potential counter-examples to the
Slice Ribbon Conjecture such as those constructed in~\cite{cha2016casson}.

In the case where the manifold $Y$ are simply-connected,
Equation~{(\ref{eq:chi})} is equivalent to determining the rank of its
intersection form, which is why this (easy to obtain) equation is of
interest. Lastly, we note that {Theorem~\ref{thm:necessary}} generalizes
in the obvious way to the situation where the branching set admits
multiple cone singularities, the signature of the cover picking up a
defect term for each singular point. That is, if the embedding of
$B$ in $X$ has singularities $\alpha _{1}, \dots , \alpha _{k}$, then
\begin{equation*}
\chi (Y)=p\chi (X)-\frac{p-1}{2}\chi (B)-k\frac{p-1}{2}
\end{equation*}
and
\begin{equation*}
\sigma (Y)=p\sigma (X) -\frac{p-1}{4}e(B) -\Sigma _{i=1}^{k} \Xi _{p}(
\alpha _{i}).
\end{equation*}

The following theorem is a partial converse to {Theorem~\ref{thm:necessary}}.

\begin{thm}[Sufficient condition]\label{thm:sufficient}
Let $X$ be a simply-connected closed oriented PL four-manifold. Let
$B\subset X$ be a closed connected surface PL-embedded in $X$ and such
that $\pi _{1}(X-B, x_{0}) \cong \mathbb{Z}/2\mathbb{Z}$. Let $p$ be an
odd prime, and let $\alpha $ be any two-bridge slice knot which admits
a $p$-fold dihedral cover. If $\sigma $ and $\chi $ are two integers
which satisfy Equations~{(\ref{eq:chi})} and {(\ref{eq:sigma})}, respectively,
with respect to $X$, $B$ and $\alpha $, then there exists a
simply-connected four-manifold $Y$ such that $\sigma (Y) = \sigma $,
$\chi (Y) = \chi $ and $Y$ is an irregular dihedral $p$-fold cover of
$X$.
\end{thm}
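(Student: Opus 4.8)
The plan is to realize the desired manifold $Y$ by a two-step procedure: first build \emph{some} irregular dihedral $p$-fold cover $f_0 \colon Y_0 \to X$ branched along $B$ with the prescribed singularity type $\alpha$, then modify $Y_0$ by connect-summing with copies of $\mathbb{CP}^2$, $\overline{\mathbb{CP}^2}$, and/or $S^2 \times S^2$ so as to hit the prescribed values of $\sigma$ and $\chi$ while preserving simple-connectivity and the property of being a dihedral cover of $X$. For the first step, I would use the hypothesis that $\pi_1(X - B, x_0) \cong \mathbb{Z}/2\mathbb{Z}$ together with the fact that $\alpha$ admits a $p$-fold dihedral cover: the meridian of $B$ generates $\pi_1(X-B)$, and one must upgrade the given surjection $\pi_1(S^3 - \alpha) \twoheadrightarrow D_p$ to a surjection $\pi_1(X - B) \twoheadrightarrow D_p$ sending the meridian to a reflection. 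Here the local model near the singularity $z$ (Definition~\ref{def:sing}) must be compatible: the complement of a cone on $\alpha$ in a $4$-ball has fundamental group $\pi_1(S^3 - \alpha)$, so the global representation on $\pi_1(X-B)$ must restrict to the chosen dihedral representation of $\alpha$ near $z$. Since $\alpha$ is two-bridge, its $p$-fold dihedral cover of $S^3$ is again $S^3$ (this is classical for two-bridge knots, and is exactly the hypothesis forced on us by Theorem~\ref{thm:necessary}), so the normality condition and the identification of $f^{-1}(z)$ as a single point can be arranged; the cover $Y_0$ is then a PL four-manifold.

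The crux of the argument is controlling the invariants of the base cover $Y_0$ and showing that it is simply-connected. Simple-connectivity of $Y_0$ should follow from a van Kampen argument: decompose $X = (X - \nu(B)) \cup \nu(B)$, where $\nu(B)$ is a regular neighborhood of $B$ with the cone point, and analyze the induced decomposition of $Y_0$. The piece over $X - \nu(B)$ is the honest unbranched cover corresponding to $\phi^{-1}(\mathbb{Z}/2\mathbb{Z})$; since $\pi_1(X-B) \cong \mathbb{Z}/2\mathbb{Z}$ maps onto $D_p$ — wait, this cannot be onto $D_p$ for $p>1$, so in fact we need the representation to factor through the larger group $\pi_1(X - B - (\text{something}))$; more precisely the correct statement is that the relevant $\pi_1$ is that of the complement of $B$ \emph{including} the cone singularity removed, which is where the knot group of $\alpha$ enters, and this group does surject onto $D_p$. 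The meridional relations and the disks in $\nu(B)$ kill enough of $\pi_1$ of the cover to give $\pi_1(Y_0) = 1$. Once $Y_0$ is known to be simply-connected, $\chi(Y_0)$ and $\sigma(Y_0)$ are computed by the formulas of Theorem~\ref{thm:necessary}, i.e. they equal $p\chi(X) - \frac{p-1}{2}\chi(B) - \frac{p-1}{2}$ and $p\sigma(X) - \frac{p-1}{4}e(B) - \Xi_p(\alpha)$ respectively — but those are precisely the equations that the target pair $(\sigma, \chi)$ is assumed to satisfy. Hence $\chi(Y_0) = \chi$ and $\sigma(Y_0) = \sigma$ \emph{already}, and in fact no stabilization is needed at all; the stabilization remark above is a red herring once one realizes that $\chi$ and $\sigma$ are rigidly determined by $(X, B, \alpha, p)$.

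So the real content reduces to: (i) produce at least one simply-connected dihedral $p$-fold cover $Y_0$ of $X$ branched along $B$ with a normal singularity of type $\alpha$ at $z$, and (ii) verify that the hypotheses ($\alpha$ two-bridge slice, $p$ odd prime) are exactly what is needed to carry out (i). For (i), I would take the standard branched-cover construction: remove an open tubular neighborhood of $B$ (with the cone point) from $X$, pull back the $p$-fold dihedral cover of the knot complement $S^3 - \alpha$ — which is $S^3$ minus $\frac{p-1}{2}$ index-$2$ components and one index-$1$ component, by the local description in the excerpt — across the whole branch locus using the representation $\phi$, and then fill back in via the cone construction (the normal, slice hypothesis on $\alpha$ guaranteeing that the filling is by PL $4$-balls, so $Y_0$ is a closed PL four-manifold). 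The main obstacle, and the step I expect to require the most care, is \textbf{extending the dihedral representation over all of $\pi_1(X - B)$ with the correct local behavior at $z$ and simultaneously verifying $\pi_1(Y_0) = 1$}: one must check that the hypothesis $\pi_1(X - B) \cong \mathbb{Z}/2\mathbb{Z}$ (which pins down the cover away from $z$) is compatible with the knot-group data at $z$, and that the resulting monodromy is surjective onto $D_p$ — this is where two-bridgeness of $\alpha$ (forcing the $p$-fold dihedral cover of $\alpha$ to be $S^3$, hence fillable) and primality of $p$ (so that $D_p$ has no awkward subgroups) are used essentially.
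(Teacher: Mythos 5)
There is a genuine gap, and it begins with a misreading of the hypotheses. In Theorem~\ref{thm:sufficient} the surface $B$ is PL-embedded with \emph{no} singular point, and $\alpha$ is an arbitrary two-bridge slice knot not related to $B$; since $\pi _{1}(X-B, x_{0})\cong \mathbb{Z}/2\mathbb{Z}$ admits no surjection onto $D_{p}$, there is no dihedral $p$-fold cover branched along $B$ as embedded. Your proposal presupposes a singular point $z$ of type $\alpha$ already sitting on $B$ and tries to pull back a cover along the given embedding. The first and indispensable step of the actual argument is to \emph{change the embedding}: one forms the connected sum of pairs $(X, B)\#(S^{4}, S^{2})$, where $S^{2}\subset S^{4}$ is a singular sphere built by gluing a slice disk for $\alpha$ in one four-ball to the cone on $(S^{3}, \alpha )$ in another (Proposition~\ref{covers_of_spheres}). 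This is where sliceness of $\alpha$ enters --- not, as you suggest, to guarantee that the filling over the cone point is a ball (that follows from two-bridgeness, which forces the dihedral cover of $\alpha$ to be $S^{3}$). The resulting surface $B_{1}\cong B$ has the same self-intersection number but a larger complement group, and only then can a dihedral cover exist.

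Second, you flag ``extending the dihedral representation over all of $\pi _{1}(X-B)$ with the correct local behavior at $z$ and simultaneously verifying $\pi _{1}(Y_{0})=1$'' as the main obstacle and leave it unresolved; but that is precisely the content of the theorem. The paper sidesteps any direct analysis of $\pi _{1}(X-B_{1})$ by an explicit gluing: the cover is assembled from the dihedral cover $Z$ of the singular sphere in $S^{4}$ (minus a neighborhood of a locally flat point), $\frac{p-1}{2}$ punctured copies of the double branched cover $\hat{X}$ of $(X, B)$, and one punctured copy of $X$, attached along the $\frac{p+1}{2}$ three-spheres covering the unknot complement. Simple-connectivity then follows piece by piece: $\hat{X}$ is simply-connected by Lemma~\ref{lem:SC_double_cover_exists}, and $Z$ is simply-connected because a two-bridge slice knot is ribbon, hence bounds a homotopy ribbon disk (Lemma~\ref{ribbon_disks}), so that $\pi _{1}(S^{3}-K, x_{0})\twoheadrightarrow \pi _{1}(B^{4}-D, x_{0})$ and the simply-connected dihedral cover $S^{3}$ of $\alpha$ surjects onto the fundamental group of the cover of the disk complement (Lemma~\ref{fox-milnor}). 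None of this machinery --- the ribbon disk, the surjectivity of inclusion on $\pi _{1}$, the Fox--Milnor argument showing that the disk complement even admits a dihedral cover --- appears in your proposal, and without it neither the existence of the cover nor its simple-connectivity is established. (Your stabilization idea is, as you observe yourself, a red herring: $\chi$ and $\sigma$ are rigidly determined.)
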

Note that Equations~{(\ref{eq:chi})} and {(\ref{eq:sigma})} make sense when
$B$ and $\alpha $ are not related. The branching set of the covering map
constructed in the proof of this theorem is a surface $B_{1}\cong B$,
embedded in $X$ with an isolated singularity $z$ of type $\alpha $ and
such that $e(B_{1}) = e(B)$. When restricted to the class of two-bridge
slice singularities, {Theorems~\ref{thm:necessary}
and~\ref{thm:sufficient}} give a necessary and sufficient condition for
a pair of integers $(\sigma , \chi )$ to arise as the signature and
Euler characteristic of a simply-connected dihedral cover over a given
base.

Next, we show that over any indefinite four-manifold $X$, an infinite
family of integer pairs $(\sigma , \chi )$ can be realized as the
signatures and Euler characteristics of simply-connected $p$-fold
dihedral covers over $X$.

\begin{thm}
\label{thm:construction-slice}
Let $X$ be a simply-connected closed oriented PL four-manifold whose
second Betti number is positive. Let $\alpha $ be a two-bridge slice
knot which admits an irregular dihedral $p$-fold cover for $p$ an odd
prime. There exists an infinite family of pairwise non-homeomorphic
simply-connected closed oriented four-manifolds $\{Y_{i}\}_{i=1}^{
\infty }$, each of which is an irregular $p$-fold cover of $X$ branched
over an oriented surface embedded in $X$ with an isolated singularity
of type $\alpha $.
\end{thm}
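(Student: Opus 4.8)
\emph{Proof proposal.} The plan is to obtain the $Y_i$ by applying {Theorem~\ref{thm:sufficient}} repeatedly, varying only the topology of the branching surface downstairs. Since that theorem requires an embedded surface in $X$ whose complement has fundamental group $\mathbb{Z}/2\mathbb{Z}$, the first step is to produce one, using the hypothesis $b_2(X)>0$. Concretely, I would pick a primitive class $t\in H_2(X;\mathbb{Z})$ (it exists since $b_2(X)>0$), represent $2t$ by a connected oriented PL-embedded surface $B_0\subset X$, and note that $H_1(X-B_0)\cong\mathbb{Z}/\langle 2t\cdot H_2(X)\rangle=\mathbb{Z}/2\mathbb{Z}$ already holds because $t$ is primitive; then I would kill the remaining fundamental group of the complement by the standard technique of tubing $B_0$ along disks in $X$ bounded by loops generating $\pi_1(X-B_0,x_0)$ (possible since $\pi_1(X)=1$), arranging $\pi_1(X-B_0,x_0)\cong\mathbb{Z}/2\mathbb{Z}$ without changing $[B_0]$. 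Choosing $[B_0]=2t$ ensures $e(B_0)=(2t)^2=4t^2\equiv 0\pmod 4$, so $\tfrac{p-1}{4}e(B_0)\in\mathbb{Z}$; since $\Xi_p(\alpha)\in\mathbb{Z}$ (it is a sum of signatures of compact $4$-manifolds and Tristram--Levine signatures, cf.\ {(\ref{eq:xi-def})} and {Proposition~\ref{prop:signatureW}}), the right-hand sides of Equations~{(\ref{eq:chi})} and {(\ref{eq:sigma})} are then integers, as {Theorem~\ref{thm:sufficient}} demands.

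Next I would build the family. For each $i\geq 1$ let $B_i$ be the interior connected sum of $B_0$ with $i$ standardly embedded (unknotted) tori, each contained in its own small $4$-ball disjoint from $B_0$. Then $B_i$ is connected and oriented, $\chi(B_i)=\chi(B_0)-2i$, and $[B_i]=[B_0]=2t$, so $e(B_i)=e(B_0)$. Moreover $\pi_1(X-B_i,x_0)\cong\pi_1(X-B_0,x_0)\cong\mathbb{Z}/2\mathbb{Z}$: forming $B_i$ amounts to the connected sum $(X,B_0)\#(S^4,T_i)$ with $T_i$ a standard genus-one (or genus-$i$) surface, and since $\pi_1(S^4-T_i)\cong\mathbb{Z}$ is generated by a meridian, van Kampen amalgamates over the meridian of $B_0$ and leaves the group unchanged. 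Applying {Theorem~\ref{thm:sufficient}} to $(X,B_i,\alpha)$ yields a simply-connected closed oriented PL four-manifold $Y_i$ which is an irregular dihedral $p$-fold cover of $X$ branched over a surface $B_i'\cong B_i$ carrying a single isolated singularity of type $\alpha$ (with $e(B_i')=e(B_i)$), and with $\chi(Y_i)=p\chi(X)-\tfrac{p-1}{2}\chi(B_i)-\tfrac{p-1}{2}$ and $\sigma(Y_i)=p\sigma(X)-\tfrac{p-1}{4}e(B_i)-\Xi_p(\alpha)$.

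It then remains to see that the $Y_i$ are pairwise non-homeomorphic. Since $e(B_i)$ is independent of $i$, all the $Y_i$ share the same signature; but $\chi(B_i)=\chi(B_0)-2i$ gives $\chi(Y_i)=c+(p-1)i$ for a constant $c$, and $p\geq 3$, so the Euler characteristics — equivalently the second Betti numbers $b_2(Y_i)=\chi(Y_i)-2$ — are strictly increasing in $i$. As Euler characteristic is a homeomorphism invariant, the $Y_i$ are pairwise non-homeomorphic, and $\{Y_i\}_{i=1}^{\infty}$ is the desired family.

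I expect the only genuinely delicate point to be {Step~1}: producing an embedded surface in $X$ whose complement has fundamental group \emph{exactly} $\mathbb{Z}/2\mathbb{Z}$, rather than merely first homology $\mathbb{Z}/2\mathbb{Z}$. This is where $b_2(X)>0$ enters (through the primitive class $t$) and where one must either carry out the disk-tubing surgery carefully or invoke a lemma of that type; once such a $B_0$ is in hand, the remainder is bookkeeping with {Theorem~\ref{thm:sufficient}}, the behaviour of $[\,\cdot\,]$ and $\chi$ under connected sum of surfaces, and the integrality of $\Xi_p(\alpha)$.
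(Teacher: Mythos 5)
Your proposal follows essentially the same route as the paper: produce a PL-embedded surface $B_0$ representing twice a primitive class with $\pi_1(X-B_0,x_0)\cong\mathbb{Z}/2\mathbb{Z}$, feed $(X,B_0,\alpha)$ into {Theorem~\ref{thm:sufficient}}, raise the genus by repeated connected sum with standard unknotted tori in $S^4$ (this is exactly {Lemma~\ref{genus_increase}}, including the van Kampen argument over the meridian), and distinguish the resulting covers by Euler characteristic. The one point you rightly flag as delicate is also the one place your sketch is not quite right: you cannot in general ``tube $B_0$ along disks bounded by loops generating $\pi_1(X-B_0,x_0)$'' to kill those loops, since such a disk meets $B_0$ and only exhibits the loop as a product of conjugates of meridians; the paper instead abelianizes $\pi_1(X-F,x_0)$ by Casson finger moves killing the commutators $[\mu,g\mu g^{-1}]$, then removes the resulting double points by replacing the cone on the Hopf link with an annulus, after which normal generation by $\mu$ plus $H_1\cong\mathbb{Z}/2\mathbb{Z}$ gives the desired group. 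With that substitution your argument matches the paper's proof.
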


We remark that for any $p$, infinitely many knots $\alpha $ which
satisfy the hypotheses of the theorem exist, as shown in
{Proposition~\ref{slice_knots_have_covers}}. In~\cite{cahn2017singular},
the author and Cahn used the construction in the proof of
{Theorem~\ref{thm:construction-slice}} to give an infinite family of
three-fold dihedral covers $\mathbb{CP}^{2}\to S^{4}$. Each of these covers is branched along a
two-sphere embedded in $S^{4}$ with one  two-bridge slice
singularity, and the singularities used are pairwise distinct. Note that, as indicated previously, the signature
obstructs the existence of a branched cover $\mathbb{CP}^{2}\to S^{4}$
branched along a non-singular oriented surface.

The construction of infinite families of branched covers given in
{Theorem~\ref{thm:construction-slice}} relies on our ability to vary the
branching set of a dihedral cover. It invites the question, under what
conditions can \textit{a particular manifold} $Y$ be realized as a
$p$-fold dihedral cover over a given base data $(X, B, \alpha )$? In
situations where the manifold $Y$ is (nearly) determined by the rank and
signature of its intersection form, we obtain a complete classification.

\begin{thm}
\label{thm:odd_indefinite}
Let $X$ and $Y$ be simply-connected closed oriented PL four-manifolds
whose intersection forms are odd. Fix an odd square-free integer $p$ and
a two-bridge slice knot $\alpha $ which admits a $p$-fold dihedral
branched cover. Let $B\subset X$ be a PL-embedded surface such that
$\pi _{1}(X-B, x_{0}) \cong \mathbb{Z}/2\mathbb{Z}$. Then, the Euler
characteristic and signature of $Y$ satisfy Equations~{(\ref{eq:chi})}
and~{(\ref{eq:sigma})} with respect to $X$, $B$ and $\alpha $ if and only
if $Y$ is homeomorphic to an irregular $p$-fold dihedral cover of
$X$ branched along a surface $B_{1}$ embedded in $X$ with a singularity
$\alpha $ and such that $B_{1}\cong B$ and $e(B_{1}) = e(B)$.
\end{thm}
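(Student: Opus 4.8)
The proof is by establishing the two implications separately. For the forward direction, suppose $f\colon Y_0\to X$ is an irregular $p$-fold dihedral cover branched along a surface $B_1$ with $B_1\cong B$, $e(B_1)=e(B)$, and a normal singularity of type $\alpha$ (normality being assumed throughout), and with $Y_0$ homeomorphic to $Y$. Since $\chi$ and $\sigma$ are homeomorphism invariants, it is enough to verify (\ref{eq:chi}) and (\ref{eq:sigma}) for $Y_0$; applying Theorem~\ref{thm:necessary} with branching surface $B_1$ and using $\chi(B_1)=\chi(B)$, $e(B_1)=e(B)$, the equations for $(X,B_1,\alpha)$ are literally those for $(X,B,\alpha)$, so $Y_0$, hence $Y$, satisfies them. (Theorem~\ref{thm:necessary} is stated for odd prime $p$, whereas here $p$ is odd square-free; I address this at the end.)

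For the converse, suppose $\chi(Y)$ and $\sigma(Y)$ satisfy (\ref{eq:chi}) and (\ref{eq:sigma}) relative to $X$, $B$, $\alpha$. These equations determine $(\chi,\sigma)$ uniquely from $(X,B,\alpha,p)$, so $(\chi(Y),\sigma(Y))$ is the unique solution. By Theorem~\ref{thm:sufficient} --- whose construction (replace a trivial subdisk of $B$ by the cone on $\alpha$ to produce $B_1\cong B$ with $e(B_1)=e(B)$ and $\pi_1(X-B_1,x_0)\twoheadrightarrow D_p$, then pass to the associated cover and check that it is simply-connected) goes through equally for odd square-free $p$ --- there is a simply-connected closed oriented PL four-manifold $Y'$ which is an irregular $p$-fold dihedral cover $f\colon Y'\to X$ branched along such a $B_1$, with $\chi(Y')=\chi(Y)$ and $\sigma(Y')=\sigma(Y)$.

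It then remains to show $Y'$ is homeomorphic to $Y$. Both are simply-connected, so $b_2(Y')=\chi(Y')-2=\chi(Y)-2=b_2(Y)$ and $\sigma(Y')=\sigma(Y)$, and hence the intersection forms $Q_Y$ and $Q_{Y'}$ have the same rank and signature. Moreover $Q_{Y'}$ is odd: since $Q_X$ is odd, choose $a\in H^2(X;\mathbb{Z})$ with $\langle a\cup a,[X]\rangle$ odd; as $f$ has degree $p$ we have $f_*[Y']=p[X]$, whence $\langle f^*a\cup f^*a,[Y']\rangle=p\langle a\cup a,[X]\rangle$ is odd (because $p$ is odd), so $f^*a$ has odd self-intersection in $Y'$. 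Now $Q_Y$ is odd by hypothesis and $Q_{Y'}$ is odd by the above, and $Y$, $Y'$ are PL, hence smooth; so if these forms are definite, Donaldson's diagonalization theorem forces each of $Q_Y$, $Q_{Y'}$ to be $\langle 1\rangle^{\oplus b_2}$ or $\langle -1\rangle^{\oplus b_2}$, the sign fixed by that of the common signature, and if they are indefinite, the classification of indefinite unimodular forms forces each to be $b_+\langle 1\rangle\oplus b_-\langle -1\rangle$ with $b_\pm$ fixed by $b_2$ and $\sigma$. In either case $Q_Y\cong Q_{Y'}$, so by Freedman's classification of simply-connected closed topological four-manifolds --- which attaches a unique homeomorphism type to each odd unimodular form --- $Y$ is homeomorphic to $Y'$, a cover of exactly the type required.

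The conceptual core is the one-line observation that an odd-degree branched cover of a four-manifold with odd intersection form again has odd intersection form; this is the sole place the hypotheses on $Q_X$ and on the parity of $p$ enter, and it is what promotes the numerical equalities $\chi(Y)=\chi(Y')$, $\sigma(Y)=\sigma(Y')$ to a homeomorphism, via the rigidity of odd unimodular forms (Donaldson in the definite case, the algebraic classification otherwise). The remaining task --- which I expect to be the most laborious, though it introduces no new ideas --- is to verify that Theorems~\ref{thm:necessary} and~\ref{thm:sufficient} hold for $p$ odd square-free, not merely prime: the characteristic-knot theory and the cobordism $W(\alpha,\beta)$ of \cite{CS1984linking} are already developed at that generality, so one need only re-run the signature computation behind Theorem~\ref{thm:necessary} and the covering construction behind Theorem~\ref{thm:sufficient}, checking that every step survives.
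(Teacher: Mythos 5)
Your proof is correct and follows essentially the same route as the paper's: the forward direction via Theorem~\ref{thm:necessary} together with $B_1\cong B$, $e(B_1)=e(B)$, and the converse by running the construction of Theorem~\ref{thm:sufficient} to produce a simply-connected cover $Z$ with the same $\chi$ and $\sigma$ as $Y$, then matching the intersection forms (Donaldson in the definite case, the classification of indefinite unimodular forms otherwise) and invoking Freedman. The only local difference is how the oddness of the constructed cover's form is established --- you pull back a class of odd square and use that the degree $p$ is odd, whereas the paper notes that $Z$ contains a punctured copy of $X$ as a connected summand so that $Q_X$ sits inside $Q_Z$ as a direct summand; both are valid, and your flag that Theorems~\ref{thm:necessary} and~\ref{thm:sufficient} are stated for $p$ prime while this theorem permits $p$ odd square-free is a fair point which the paper's own proof passes over without comment.
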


The rest of this article is organized as follows. In
Section~\ref{sec:necessary} we prove {Theorem~\ref{thm:necessary}}, give
a formula for $\sigma (W(\alpha , \beta ))$ in terms of linking numbers
in a branched cover of $\alpha $, and show that the defect on the
signature arising from a singularity on the branching set is an
invariant of the singularity type. Section~\ref{sec:sufficient} is
dedicated to the proofs of {Theorems~\ref{thm:sufficient},
\ref{thm:construction-slice} and~\ref{thm:odd_indefinite}}. In Appendix~\ref{appA}
we study characteristic knots of two-bridge knots. Appendix~\ref{appB} lays out
a procedure for calculating linking numbers in a branched cover of a
knot $\alpha $ in terms of a diagram of $\alpha $.

\section{Signatures of dihedral covers}%
\label{sec:necessary}
Our strategy in proving {Theorem~\ref{thm:necessary}} is to resolve the
singularity on the branching set and reduce the computation of the
signature to the case of a PL embedded surface. Then, Novikov
additivity~\cite{novikov1970pontrjagin} implies that the difference between the signatures of the smooth and singular covers is
given by the signature of the manifold used to resolve the singularity.
The final step is to compute the signature of this manifold and prove we can express it in terms of invariants of the singularity type.

\begin{proof}{Proof of {Theorem~\ref{thm:necessary}}}
The assertion that $\alpha $ admits an irregular dihedral $p$-fold cover
and this cover is the three-sphere is verified by considering the local
picture around the singular point. The existence of a $p$-fold dihedral
cover $f: Y\to X$ over the pair $(X, B)$ implies straight away that the
knot $\alpha $ itself admits a $p$-fold dihedral cover $M$. Indeed,
 consider $f^{-1}(\partial N(z))=:M$,
where $z\in B\subset X$ is the singular point on the branching set and
$N(z)$ denotes a small neighborhood. Since by assumption there is a
homeomorphism of pairs
\begin{equation*}
(\partial N(z), B\cap \partial N(z))\cong (S^{3}, \alpha ),
\end{equation*}
the restriction of $f$ to $M$ is a $p$-fold dihedral cover of $\alpha$, as claimed. It is
connected since $z$ is normal. In particular, this means that the knot
group of $\alpha $ surjects onto the dihedral group $D_{p}$.
Furthermore, over $N(z)$ lies the cone on $M$. Since by assumption
$Y$ is a manifold, $M$ is homeomorphic to the three-sphere.

We begin by verifying Equation~{(\ref{eq:chi})}, a straight-forward
computation. Let $N(B)$ denote a regular neighborhood of $B$ in $X$.
Then, we can write
\begin{equation*}
X = (X - N(B))\bigcup _{\partial N(B)} N(B).
\end{equation*}
Since $\partial N(B)$ is a closed oriented three-manifold, we have $\chi (\partial N(B))=0$. This gives:
\begin{equation*}
\chi (X) = \chi (X-N(B)) + \chi (N(B)) = \chi (X-B) + \chi (B).
\end{equation*}
We can further break down this equation as:
\begin{equation*}
\chi (X)
= \chi (X-B) + \chi (B - z) + 1.
\end{equation*}

Similarly, denoting $B':=f^{-1}(B)$ and $z' : = f^{-1}(z)$, we have,
\begin{equation*}
\chi (Y) = \chi (Y-B') + \chi (B' - z') +
1.
\end{equation*}

We know that $f|_{Y-B'}: Y-B'\to X-B$ is a $p$-to-one covering map,
$f|_{B'-z'}: B'-z'\to B-z$ is a $\frac{p+1}{2}$-to-one covering map,
and, of course, $f|_{z'}: z'\to z$ is one-to-one. Therefore,
\begin{equation*}
\chi (Y) = p\chi (X-B) + \frac{p+1}{2}(\chi (B) - 1) +1 = p\chi (X)-
\frac{p-1}{2}\chi (B)-\frac{p-1}{2},
\end{equation*}
as claimed.

Now we turn to the computation of $\sigma (Y)$, a
considerably harder task. We devise a geometric procedure for the
resolution of the singularity on the branched cover. The singularity is
resolved in two stages. At the start, the branching set has one
singular point, in a neighborhood of which the branching set can be
described in terms the knot $\alpha $. Our first step will be to replace
this singularity by a circle's worth of ``standard'' (that is, independent
of the knot type $\alpha $) non-manifold points on the branching set.
The second step will be to excise these ``standard'' singularities and
construct a new cover whose branching set is a PL submanifold
of the base. We carry out these two steps in detail below. In the last stage of the proof, we calculate the effect of the resolution of singularities on the signatures of the four-manifolds involved.

\textit{Step 1.} Let $D_{z}\subset X$ be a neighborhood of the singular
point $z$ such that $D_{z}\cap B$ is the cone on
$\alpha $. As we already established, $\alpha $ admits a $p$-fold
dihedral cover. Equivalently, if $V$ is any Seifert surface for
$\alpha $, there exists a mod~$p$ characteristic knot $\beta \subset
V^\circ$ (see {Definition~\ref{defn:chark}}). Let $W(\alpha , \beta )$ be the
manifold constructed in \cite{CS1984linking} as a cobordism between
a $p$-fold dihedral cover of $(S^{3}, \alpha )$ and a $p$-fold cyclic
cover of $(S^{3}, \beta )$. By construction of $W(\alpha , \beta )$, which is recalled in the
proof of {Proposition~\ref{prop:homologyW}}, there is a $p$-fold branched
covering map
\begin{equation*}
h_{1}: W(\alpha , \beta ) \to S^{3}\times [0, 1],
\end{equation*}
which restricts to a $p$-fold dihedral cover of $(S^{3}\times \{0\}, \alpha )$
and to a $p$-fold cyclic cover of $(S^{3}\times \{1\}, \beta )$. Let
\begin{equation*}
h_{2}: Q\to D^{4}
\end{equation*}
be a $p$-fold cyclic cover of the closed four-ball branched over a
pushed-in Seifert surface $V'$ for $\beta $, as constructed in Theorem~5 of \cite{CS1975branched}. Denote by $\Sigma $ the $p$-fold cyclic
cover of $(S^{3}, \beta )$. By construction, $\partial Q\cong \Sigma
$ and, similarly, $W(\alpha , \beta )$ has one boundary component
homeomorphic to $\Sigma $. Moreover, for $i=1, 2$, the map
\begin{equation*}
h_{i}|_{\Sigma }: \Sigma \to S^{3}
\end{equation*}
is the $p$-fold cyclic cover branched along $\beta $. Therefore, we can
construct a branched cover
%
\begin{equation}
\label{define_h}
h_{1}\cup h_{2}: W(\alpha , \beta )\bigcup _{\Sigma }Q\longrightarrow
S^{3}\times [0, 1]\bigcup _{S^{3}\times \{1\}} D^{4}.
\end{equation}
We denote $W(\alpha , \beta )\bigcup _{\Sigma }Q$ by $Z$ for short, and
the map $h_{1}\cup h_{2}$ by $h$. Thus, we can rewrite
Equation~{(\ref{define_h})} as
\begin{equation*}
h:Z\to D^{4}.
\end{equation*}
This map is a $p$-fold branched cover whose restriction to the boundary
of $Z$ a $p$-fold irregular dihedral cover of $(S^{3}, \alpha )$. So,
denoting the branching set of $h$ by $T$, we have,
%
\begin{equation}
\label{define_T}
T\cong \alpha \times [0, \frac{1}{2}]
\bigcup _{\alpha \times \{\frac{1}{2}\}} V\times \{\frac{1}{2}\}
\bigcup _{\beta \times \{\frac{1}{2}\}} \beta \times [\frac{1}{2}, 1]
\bigcup _{\beta \times \{1\}} V'.
\end{equation}
We see from this description that $T$ is a two-dimensional PL subcomplex
of $D^{4}$ which is a manifold away from the curve $\beta \times \{
\frac{1}{2}\}$. Observe that $\beta \times \{\frac{1}{2}\}$ is embedded
in the interior of $V\times \{\frac{1}{2}\}$. Therefore, in a small
neighborhood of the curve $\beta \times \{\frac{1}{2}\}$, the branching
set is homeomorphic to the Cartesian product of $S^{1}$ and the letter
``$\top $''. (For more details on this construction we once again refer
the reader to \cite{CS1984linking}.)

The idea is to use the map $h$ to construct a new cover of the manifold
$X$ which will differ from the original cover $f$ only over a
neighborhood of the singularity $z\in B$. Specifically, let
$D_{z}':= f^{-1}(D_{z})$ and observe that the restrictions of the maps
$f$ and $h$ to the boundaries of $Y- \interior{D}_{z}$ and $Z$,
respectively, are the $p$-fold irregular dihedral branched
cover\footnote{We use the phrase ``\textit{the} dihedral cover of
$\alpha $'' somewhat liberally throughout this paper. As noted
previously, dihedral covers of $\alpha $ are in bijective correspondence
with equivalence classes of characteristic knots~$\beta $. Naturally,
if $\alpha $ admits multiple non-equivalent dihedral covers, we choose
the one determined by $f$ to construct $Z$.} of $(S^{3}, \alpha )$,
which is again $S^{3}$. We thus obtain a new branched covering
map
%
\begin{equation}
\label{define-f1}
f\cup h: (Y-\interior{D_z'})\bigcup _{S^{3}} Z \longrightarrow (X - \interior{D}
_{z}) \bigcup _{S^{3}} D^{4}.
\end{equation}
Denote the covering manifold $(Y-\interior{D_z'})\bigcup _{S^{3}} Z$ above  by $Y_{1}$ and the map
$f\cup h$ by $f_{1}$. Note that, by Novikov
additivity~\cite{novikov1970pontrjagin}, and since $D_{z}'$ is a
four-ball, $\sigma (Y_{1})=\sigma (Y) + \sigma (Z)$.

Now consider the base space of the branched covering map~\ref{define-f1},
\begin{equation*}
X_{1}:= (X - \interior{D}_{z}) \bigcup _{S^{3}} D^{4}.
\end{equation*}
Since $X_{1}\cong X$, we will continue to denote this space by $X$.
Lastly, denote the branching set of $f_{1}$ by $B_{1}$ and remark that
%
\begin{equation}
\label{define-B1}
B_{1}\cong B - \interior{N(z)} \bigcup _{\alpha \times \{0\}} T.
\end{equation}
In other words, we replaced the cone on $\alpha $ by $T$. As prescribed,
$B_{1}$ has a circle's worth of non-manifold points -- they are the points
corresponding to $\beta \times \{\frac{1}{2}\}$ in
Equation~{(\ref{define_T})} -- regardless of the choice of the knot
$\alpha $.

\textit{Step 2.} Denote by $\beta ^{\ast }$ the curve of non-manifold
points on $T$. We have, $\beta ^{\ast }\subset T\subset D^{4}
\subset X$. Let $N(\beta ^{\ast })$ be a small open tubular neighborhood
of $\beta ^{\ast }$ in $X$. We give $N(\beta ^{\ast })$ the following
trivialization. For every $b\in \beta ^{\ast }$, let $\vec{n}_{1}(b)$ be
the positive normal to $\beta $ in $V$ at the point $b$,
$\vec{n}_{2}(b)$ the positive normal to $V$ in $S^{3}\times \{
\frac{1}{2}\}$, and $\vec{n}_{3}(b)$ the positive normal to
$S^{3}$ in the product structure $S^{3}\times I$. Clearly, $\{\vec{n}
_{1}(b), \vec{n}_{2}(b), \vec{n}_{3}(b)\}$ are linearly independent for
all $b\in \beta ^{\ast }$, so they give a framing of $\beta ^{\ast }$ in
$X$.

We use the above framing to identify $\overline{N(\beta ^{\ast })}$ with
$S^{1} \times B^{3}$ and $\partial \overline{N(\beta ^{\ast })}$ with $\beta ^{
\ast }\times S^{2}$. Now, we construct a new closed oriented
four-manifold, denoted $X_{2}$, as follows:
\begin{equation*}
X_{2} = \bigl (X - N(\beta ^{\ast })\bigr )\bigcup _{S^{1}\times S^{2}}
\bigl (X - N(\beta ^{\ast })\bigr ).
\end{equation*}
The identification of the two copies of $\partial (X - N(\beta ^{
\ast }))$ is done by the homeomorphism
\begin{equation*}
\phi : S^{1}\times S^{2}\to S^{1}\times S^{2}
\end{equation*}
given by the formula
\begin{equation*}
\phi (e^{i\theta }, y)=(e^{-i\theta }, y).
\end{equation*}
In particular, $\phi $ reverses orientation on $S^{1}\times S^{2}$, so
the manifold $X_{2}$ can be given an orientation which restricts to the
original orientations on both copies of $X - N(\beta ^{\ast })$.
Therefore, by Novikov additivity we obtain
%
\begin{equation}
\label{sigma-X}
\sigma (X_{2}) = 2\sigma (X - N(\beta ^{\ast })) = 2\sigma
(X).
\end{equation}
Note that, since $\phi $ restricts to the identity on the $S^{2}$
factor, it identifies the boundary of the branching set $T-N(\beta
^{\ast })$ in one copy of $X - N(\beta ^{\ast })$ with the boundary of
branching set in the other copy of $X - N(\beta ^{\ast })$. Thus, the
image of the branching set after this identification has the form
%
\begin{equation}
\label{surface-mess}
\bigl (B_{1} -N(\beta ^{\ast })\bigr )\bigcup _{3 S^{1}}\bigl (B_{1} -N(
\beta ^{\ast })\bigr ) =: B_{2}.
\end{equation}
Here the fact that the union is taken along three circles corresponds
to the fact that the intersection of $\partial \overline{N(\beta ^{\ast })}$ and $B_{1}$ consists of three closed curves, one for each
``boundary point'' of the letter ``$\top $''.

Note that, since $\phi $ reverses the orientation on each boundary
circle, the orientations of the two copies of $(B_{1} -N(\beta ^{
\ast }))$ can be combined to obtain a compatible orientation on~$B_{2}$. Furthermore, the positive normal to the oriented surface
$(V-N(\beta ^{\ast }))\cup _{\phi _{|}}(V-N(\beta ^{\ast }))$ inside the
three-manifold $(S^{3}\times \{\frac{1}{2}\} -N(\beta ^{\ast }))
\cup _{\phi _{|}} (S^{3}\times \{\frac{1}{2}\} -N(\beta ^{\ast }))$
restricts to the normals to $V$ in each corresponding copy of
$S^{3}$. This observation will come into use shortly.

Recalling the definition of $B_{1}$, namely $B_{1}=(B-\interior{D}
_{z})\bigcup _{\alpha }(T-N(\beta ^{\ast }))$, we can describe
$B_{2} = (B_{1}-N(\beta ^{\ast }))\cup _{3S^{1}}(B_{1}-N(\beta ^{\ast }))$
in more detail as follows:
%
\begin{equation}
\label{surface-bigger-mess}
B_{2}=\bigl ( (B-\interior{D}_{z})\cup _{\alpha }(T-N(\beta ^{\ast }))
\bigr ) \bigcup _{3 S^{1}}\bigl ((B-\interior{D}_{z})\cup _{\alpha }(T-N(
\beta ^{\ast }))\bigr ).
\end{equation}
By construction, $B_{2}$ is embedded piecewise-linearly in $X_{2}$ --
that is, all singularities have been resolved. In addition,
$B_{2}$ has two connected components, since deleting a neighborhood of
$\beta ^{\ast }$ disconnects $T$. Thus, two copies of $(T-N(\beta ^{\ast }))$ gives four disjoint surjaces with boundary. Attaching along the
three curves in $(S^{1}\times S^{2})\cap (T-N(\beta ^{\ast }))$ via $\phi _{|}$ has the effect of pairing off each of these four surfaces with
boundary and its homeomorphic copy. This produces two closed surfaces which
we denote $B_{2}'$ and $B_{2}''$. Here, $B_{2}'$ is the component of
$B_{2}$ obtained by identifying two copies of $(B-\interior{D}_{z})
\cup _{\alpha }(V-\beta )$ along $S^{1}\amalg S^{1}$, where $(V-\beta )$
denotes the complement in $V$ of a small open neighborhood of
$\beta $. In turn, $B_{2}''$ is the component of $B_{2}$ obtained by
identifying two copies\footnote{It would be more consistent with our
earlier notation to say that $B_{2}''$ is obtained from two copies of
$\beta \times [\frac{1}{2},1]\cup _{\beta \times \{1\}}V'$, which, of
course, is a surface homeomorphic to $V'$.} of $V'$ along $S^{1}$. By
construction, the cover over $B_{2}'$ is $p$-fold dihedral, whereas the
cover over $B_{2}''$ is $p$-fold cyclic. That is, a point in
$B_{2}'$ has $\frac{p+1}{2}$ pre-images, all but one of branching index
2, whereas a point in $B_{2}''$ has one pre-image of index $p$. This
distinction becomes relevant when we compute the signature of the
branched cover whose branching set consists of $B_{2}'$ and
$B_{2}''$ (Equation~{(\ref{eq1})}).

Now our aim is to construct a $p$-fold branched cover of $(X_{2}, B
_{2})$ from the covers $f$ of $(X, B)$ and $h$ of $(D^{4}, T)$.
Moreover, we require that the cover we construct be a manifold. We are
helped greatly in this task by the observation that
\begin{equation*}
N':= h^{-1}(N(\beta ^{\ast }))\cong S^{1}\times B^{3},
\end{equation*}
even though the branching set of $h$ is non-manifold along $\beta ^{
\ast }$. (A nice explanation of this rather surprising fact can be found
on pp.~173--174 of~\cite{CS1984linking}.) Here, the trivialization of the
normal bundle $N'$ of $h^{-1}(\beta ^{\ast })$ is given by the pull-back
of the trivialization of $N(\beta ^{\ast })$. We construct the
covering manifold as
\begin{equation*}
Y_{2}:=\bigl (Y_{1}-N'\bigr )\bigcup _{S^{1}\times S^{2}}\bigl (Y_{1}-N'
\bigr ).
\end{equation*}
The identification along the boundary $S^{1}\times S^{2}$ is again done
by $\phi $, so, once more, $Y_{2}$~can be given an orientation which
restricts in each copy of $(Y_{1}-N')$ to the orientation compatible
with the given orientation on $Y$. In particular,
%
\begin{equation}
\label{signature-Y2}
\sigma (Y_{2})=\sigma \bigl ((Y_{1}-N')\cup _{S^{1}\times S^{2}}(Y_{1}-N')
\bigr ) = 2\sigma (Y_{1}) = 2(\sigma (Y)+ \sigma (Z)).
\end{equation}
Recall that both $Y_{2}$ and $X_{2}$ were constructed from copies of
$(Y_{1}-N')$ and $(X - N(\beta ^{\ast }))$ by gluing via $\phi$. Therefore, the
restrictions of $f_{1}$ to the two copies of $(Y_{1}-N')$,
\begin{equation*}
f_{1}|:(Y_{1}-N')\to (X - N(\beta ^{\ast })),
\end{equation*}
can be glued to obtain a map
\begin{equation*}
f_{2}: \bigl ( (Y_{1}-N')\cup _{S^{1}\times S^{2}}(Y_{1}-N')\bigr )
\to \bigl (X - N(\beta ^{\ast })\bigr )\cup _{S^{1}\times S^{2}} \bigl (X -
N(\beta ^{\ast })\bigr ),
\end{equation*}
written for short as
\begin{equation*}
f_{2}: Y_{2} \to X_{2}.
\end{equation*}
This is the branched cover we will use in the final step of the proof
to compute the signature of the original manifold $Y$.

\emph{Step 3.} To complete the proof, what remains is to compute the
effect this construction has on the signatures of the base and covering
manifolds. By Viro's formula~\cite{viro1984signature} for the
signature of a branched cover, we have
%
\begin{equation}
\label{eq1}
\sigma (Y_{2}) = p\sigma (X_{2}) - \frac{p-1}{4}e(B_{2}') - \frac{p
^{2}-1}{3p}e(B_{2}'').
\end{equation}
Recall that from Equations~{(\ref{sigma-X})} and~{(\ref{signature-Y2})} we have
%
\begin{equation}
\label{eq2}
\sigma (X_{2})=2\sigma (X)
\end{equation}
and
%
\begin{equation}
\label{eq3}
\sigma (Y)=\frac{1}{2}\sigma (Y_{2})-\sigma (Z).
\end{equation}
Also, by Novikov additivity,
%
\begin{equation}
\label{eq4}
\sigma (Z, S^{3}) = \sigma (W(\alpha ,\beta ), S^{3}\cup \Sigma ) +
\sigma (Q, \Sigma ) = \sigma (W(\alpha ,\beta )) + \sum _{i=1}^{p-1}
\sigma _{\zeta ^{i}}(\beta ).
\end{equation}
In the last step, we have expressed the signature of $Q$ in terms of
Tristram--Levine signatures of $\beta $, using Theorem~5
of~\cite{CS1975branched}. We have also shortened $\sigma (W(\alpha
,\beta ), S^{3}\cup \Sigma )$ to $\sigma (W(\alpha ,\beta ))$. Now we
combine Equations~{(\ref{eq1})}, {(\ref{eq2})}, {(\ref{eq3})} and {(\ref{eq4})} in order
to express $\sigma (Y)$ in terms of data about $X$, the branching set,
the singularity $\alpha $ and its characteristic knot $\beta $. After
simplifying, we obtain,
%
\begin{equation}
\label{euler-mess}
\sigma (Y)=p\sigma (X) -\frac{1}{2}\bigl (\frac{p-1}{4}e(B_{2}') + \frac{p
^{2}-1}{3p}e(B_{2}'')\bigr ) -\sigma (W(\alpha , \beta )) - \sum _{i=1}
^{p-1} \sigma _{\zeta ^{i}}(\beta ).
\end{equation}
To arrive at Equation~{(\ref{eq:sigma})}, what remains is to compute the
self-intersection numbers of $B_{2}'$ and $B_{2}''$ in $X_{2}$ and
relate them to that of $B$ in $X$.

We denote the intersection number of two submanifolds by ``$\centerdot
$'', and the push-off of a submanifold $S$ along a normal $\vec{u}$ by
$\vec{u}(S)$. For brevity, we also denote $B-\interior{D}_{z}$, the
complement in $B$ of a small open neighborhood of the singularity
$z$, by $B_{z}$.

Note that if $\vec{v}$ is a continuous extension (not necessarily
non-vanishing) to $B_{z}$ of the normal to $V$ in $S^{3}=\partial D
_{z}$ such that $B_{z}$ and $\vec{v}(B_{z})$ are transverse, then by
definition
%
\vspace*{-2pt}
\begin{equation}
\label{define-e}
e(B)=(B_{z}\cup _{\alpha }V)\centerdot \vec{v}(B_{z}\cup _{\alpha }V).
\end{equation}
Now, $V\subset D_{z}$ and $\vec{v}(V)\subset D_{z}$, whereas
$B_{z}\cap D_{z} = \alpha $ and $\vec{v}$ can be chosen so that
$\vec{v}(B_{z})\cap D_{z}=\vec{v}(\alpha )$. In particular, $V$ is
disjoint from both $\vec{v}(V)$ and $\vec{v}(B_{z})$, and $B_{z}$ is
disjoint from $\vec{v}(V)$. Therefore, Equation~{(\ref{define-e})}
simplifies to
%
\vspace*{-2pt}
\begin{equation}
\label{simpler}
e(B)=B_{z}\centerdot \vec{v}(B_{z}),
\end{equation}
where the right hand side represents the intersection number of
transverse submanifolds with disjoint boundary in $X-\interior{D}_{z}$.
Recall that the surface $B_{2}'$ is obtained from two copies of
$B_{z}\cup _{\alpha }(V - \beta )$ attached by a homeomorphism
$\phi _{|}$ on their boundary $\beta _{1} \amalg \beta _{2}$, reversing
orientation on each component. Recall also that the restriction to
$S^{3}\times \{\frac{1}{2}\} \cap \partial N(\beta ^{\ast })$ of the
positive normal to $V$ in $S^{3}\times \frac{1}{2}$ (and thus of
$\vec{v}$), is preserved by the gluing homeomorphism~$\phi _{|}$.
Therefore, the two copies of the normal $\vec{v}$ to $B_{z}\cup _{
\alpha }(V - \beta )$ can be combined obtain a normal, which we also
denote $\vec{v}$, to $B_{2}'$ in $X_{2}$. Then,
%
\vspace*{-2pt}
\begin{equation}
\label{B2}
B_{2}'= B_{z}\cup _{\alpha }(V-\beta ) \cup _{\beta _{1}\amalg \beta _{2}}
(V-\beta )\cup _{\alpha }B_{z}.
\end{equation}
Since by the argument above $V-\beta $ and $\vec{v}(V-\beta )$
contribute nothing to the self-intersection $B_{2}'\centerdot \vec{v}(B
_{2}')$, we have
%
\vspace*{-2pt}
\begin{equation}
\label{eB2'}
e(B_{2}')= 2(B_{z}\centerdot \vec{v}(B_{z})) = 2e(B).
\end{equation}

Next, recall that $B_{2}''=V'\cup _{\beta }V'$ and $\vec{n}_{1}$ is the
normal to $\beta $ in $V$. Denote by $\vec{v'}$ a continuous extension
(not necessarily nowhere-zero) to $V'$ of the normal $\vec{n}_{1}$ so
that $V'$ and $\vec{v'}(V')$ are transverse. We have,
%
\vspace*{-2pt}
\begin{equation}
\label{eB2''}
e(B_{2}'')= 2(V'\centerdot \vec{v'}(V')) = 2lk(\beta , \vec{v'}(
\beta ))=2lk(\beta , \vec{n}_{1}(\beta ))= L_{V}(\beta , \beta ).
\end{equation}
Recall that $L_{V}$ denotes the symmetrized linking form on $V$, the
Seifert surface for $\alpha $. The last equality follows from the fact
that $\vec{n}_{1}$ and $\vec{n}_{2}$, the normal to $\beta $ determined
by~$V$, are everywhere linearly independent, so $lk(\beta , \vec{n}
_{1}(\beta ))= lk(\beta , \vec{n}_{2}(\beta ))$.

Substituting for $e(B_{2}')$ from Equation~{(\ref{eB2'})} and for
$e(B_{2}'')$ from Equation~{(\ref{eB2''})}, we can rewrite
Equation~{(\ref{euler-mess})} as
%
\vspace*{-2pt}
\begin{equation}
\label{signY}
\begin{split}
\sigma (Y) &= p\sigma (X) -\frac{p-1}{4}e(B) - \frac{p^{2}-1}{6p}L_{V}(
\beta , \beta ) - \sigma (W(\alpha , \beta )) - \sum _{i=1}^{p-1}
\sigma _{\zeta ^{i}}(\beta ) \\ & = p\sigma (X) -\frac{p-1}{4}e(B) - \Xi_p(\alpha).
\end{split}
\end{equation}
With that, the proof is complete.
\goodbreak
\end{proof}

\begin{rem}
The property that a $p$-fold dihedral cover of a knot $\alpha $ is
homeomorphic to the three-sphere can be regarded as a condition for
$\alpha $ to be an allowable singularity on the branching set of an
irregular $p$-fold dihedral cover between four-manifolds. The condition
is satisfied, for example, for all two-bridge knots and any odd $p$ (see
the proof of {Lemma~\ref{fox-milnor}}) and can be disregarded if one
allows the covering space to be a stratified space. In this case, an
analogous formula can be obtained, using intersection homology signature
or Novikov signature of a singular space.
\end{rem}

\begin{rem}
\label{cyclic-covers}
It is natural to consider computing signatures of cyclic branched covers using the same ideas. Indeed, our techniques would apply and the arguments would be considerably simpler: in the notation
of the proof of the last theorem, only the manifold $Q$ would be needed
to resolve the singularity. However, it is a consequence of the Smith Conjecture~\cite{morgan1984smith} that no non-trivial
knot can arise as a singularity on a cyclic cover between
four-manifolds. This is why the case of cyclic covers is not
considered in this work. Our methods, however, are applicable in a
scenario where stratified spaces are allowed as the covers.
\end{rem}

Although we have arrived at the desired equation, the formula we have
obtained does not quite startle with its usefulness, as long as the term
$\sigma (W(\alpha , \beta ))$ remains obscure. As stated in the
introduction, we aim to compute the defect to $\sigma (Y)$ in terms of
the singularity type $\alpha $. That is, we need to express
$\sigma (W(\alpha , \beta ))$ explicitly in terms of some computable
invariants of $\alpha $. It turns out that we can give a formula for
$\sigma (W(\alpha , \beta ))$ using linking numbers of curves in the
irregular dihedral $p$-fold branched cover of $\alpha $. To this end,
we first compute the second homology group of this manifold: this is
the content of {Proposition~\ref{prop:homologyW}}. In
{Corollary~\ref{basis}}, we give a basis for this homology group in terms
of lifts to a dihedral cover of $\alpha$ of curves in the chosen Seifert surface $V$. Finally, in
{Proposition~\ref{prop:signatureW}}, we give an explicit formula for the
term $\sigma (W(\alpha , \beta ))$ using linking numbers of the above
curves.

\begin{prop}
\label{prop:homologyW}
Let $\alpha \subset S^{3}$ be a knot which admits a $p$-fold irregular
dihedral cover $M$ for some odd prime $p$. Let $V$ be a Seifert surface
for $\alpha $ and let $\beta \subset V$ be a $\mathrm{mod}\ p$ characteristic knot
for $\alpha $. Let $\Sigma$ be the $p$-fold cyclic cover of $\beta $. Let
$W(\alpha , \beta )$, here denoted $W$, be the cobordism between $M$ and
$\Sigma $ constructed in~\cite{CS1984linking} and used in the proof
of {Theorem~\ref{thm:necessary}}. Denote by $V-\beta $ the surface $V$
with a small open neighborhood of $\beta $ removed, and by $\beta _{1}$
and $\beta _{2}$ the two boundary components of $V-\beta $ that are
parallel to $\beta $. Then
%
\begin{equation}
H_{2}(W, M; \mathbb{Z})\cong \mathbb{Z}^{\frac{p-1}{2}} \oplus (H_{1}
(V-\beta ; \mathbb{Z}) /([\beta _{1}], [\beta _{2}]))^{\frac{p-1}{2}}.
\end{equation}
\end{prop}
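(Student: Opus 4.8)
The plan is to compute $H_2(W, M;\mathbb Z)$ directly from the explicit construction of $W = W(\alpha,\beta)$ recalled from \cite{CS1984linking}, using the long exact sequence of the pair and a Mayer--Vietoris decomposition that mirrors how $W$ is assembled. Recall that $W$ is built as a $p$-fold branched cover $h_1\colon W\to S^3\times[0,1]$ whose branching set is $T_0 := \alpha\times[0,\tfrac12] \cup_{\alpha} (V\times\{\tfrac12\}) \cup_{\beta}(\beta\times[\tfrac12,1])$; away from the branching set this is an honest $p$-fold cover determined by the dihedral representation of $\pi_1(S^3-\alpha)$, which near $V\times\{\tfrac12\}$ is engineered (via the characteristic condition $(L+L^T)\beta\equiv 0 \bmod p$) to become cyclic along the $\beta$-strand. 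So first I would set up $W$ as $M\times[0,\tfrac12]$ (the part over $S^3\times[0,\tfrac12]$, which deformation retracts to $M$) glued to the part $W'$ lying over $S^3\times[\tfrac12,1]$, the latter being obtained from a neighborhood of the preimage of $V\times\{\tfrac12\}$ by attaching handles/collars corresponding to pushing $\beta$ off toward $S^3\times\{1\}$.

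Next I would identify the homotopy type of the relevant pieces. The key local model is that the preimage of a bicollar neighborhood of $V\times\{\tfrac12\}$ under $h_1$: over the complement of $\beta$ in $V$ the cover is the restriction of the dihedral cover, which unwinds into $\tfrac{p-1}{2}$ index-$2$ sheets plus one index-$1$ sheet, so the preimage of $(V-\beta)\times\{\tfrac12\}$ is a union of copies of $(V-\beta)$ indexed by the double cosets; meanwhile along $\beta$ the monodromy is cyclic of order $p$, giving a single branch sheet there. Cutting $V$ along $\beta$ and tracking how the $\tfrac{p-1}{2}$ non-trivial sheets are glued to each other across the cut (they get cyclically permuted, the gluing controlled by the Seifert form) is exactly the computation that produces the two summands: the $\mathbb Z^{(p-1)/2}$ comes from the "vertical" classes — the cores of the index-$2$ components of the branch locus paired with the collar direction, one per non-trivial sheet — and the $(H_1(V-\beta)/([\beta_1],[\beta_2]))^{(p-1)/2}$ comes from lifting a basis of $H_1(V-\beta)$ to each of the $\tfrac{p-1}{2}$ non-trivial sheets, the quotient by the boundary classes $[\beta_1],[\beta_2]$ arising because those curves bound in the full $W$ (they cap off either against the index-$1$ sheet or against the cyclic branch disk pushed toward $S^3\times\{1\}$, and in any case they become null-homologous after the gluing to $M\times[0,\tfrac12]$ and the passage to the quotient $H_2(W,M)$).

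Concretely I would run the long exact sequence of $(W,M)$: since $M\simeq S^3$, $H_1(M)=H_3(M)=0$ and $H_2(M)=0$, so $H_2(W,M)\cong H_2(W)$ modulo the image of $H_2(M)=0$, i.e.\ $H_2(W,M)\cong H_2(W)/\mathrm{im}(H_2(M)) = H_2(W)$ up to the (vanishing) boundary terms — actually more carefully $H_2(W,M)$ sits in $0\to H_2(W)\to H_2(W,M)\to H_1(M)=0$, using $H_2(M)=0$ and $M$ connected, so $H_2(W,M)\cong H_2(W)$. Then I compute $H_2(W)$ by Mayer--Vietoris for $W = (M\times[0,\tfrac12]) \cup_{M} W'$, where $W'$ retracts onto the preimage of $V\times\{\tfrac12\}$; the overlap is $M\times\{\tfrac12\}\simeq S^3$, contributing nothing, so $H_2(W)\cong H_2(W') = H_2$ of the branched-cover neighborhood of $V$, which by the sheet analysis above is $\mathbb Z^{(p-1)/2}\oplus (H_1(V-\beta)/([\beta_1],[\beta_2]))^{(p-1)/2}$. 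I would double-check the identification $H_1(V-\beta)/([\beta_1],[\beta_2]) \cong H_1(V-\beta)/([\beta_1])$ (since $[\beta_1]+[\beta_2]=0$ in $H_1(V-\beta)$, as $\beta_1\cup\beta_2$ bounds the annulus $N(\beta)\cap V$) and keep the statement as written for symmetry, and verify all the sheet-permutation signs using the explicit Cappell--Shaneson gluing.

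\textbf{Main obstacle.} The delicate point is the bookkeeping of how the $\tfrac{p-1}{2}$ non-trivial sheets of the dihedral cover are glued across the cut surface $V$ cut along $\beta$, and in particular showing that the boundary curves $[\beta_1],[\beta_2]$ (and nothing more) become null-homologous in $W$ once the collar toward $S^3\times\{1\}$ and the cyclic branching along $\beta$ are accounted for; getting this exactly right — rather than an extension or a proper sub/quotient of the claimed group — is where the characteristic-knot hypothesis $(L+L^T)\beta\equiv 0\bmod p$ is genuinely used, and it is the step I expect to require the most care. Everything else (the long exact sequence, the Mayer--Vietoris collapse using $M\simeq S^3$) is routine.
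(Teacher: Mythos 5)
There is a genuine gap, and it is located precisely at the step you flag as ``routine.'' Your reduction runs $H_2(W,M)\cong H_2(W)\cong H_2(W')$, where $W'$ is the part of $W$ lying over a neighborhood of $V\times\{\tfrac12\}$, and you then assert that the sheet analysis of this ``branched-cover neighborhood of $V$'' yields the claimed group. But the preimage of (a bicollar of) $V\times\{\tfrac12\}$ is the space the paper calls $R=J/\bar h$, which deformation retracts onto $T/\bar h$ --- $\tfrac{p+1}{2}$ copies of $V$ glued along $\beta$ --- and this retracts further onto a one-complex, so $H_2(R;\mathbb Z)=0$ (Equation~(\ref{H2R}) in the paper). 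Your localization therefore computes $0$, not $\mathbb Z^{\frac{p-1}{2}}\oplus(H_1(V-\beta)/([\beta_1],[\beta_2]))^{\frac{p-1}{2}}$. The second homology of $W$ is not absolute homology of the preimage of $V$: it is carried by \emph{relative} classes, namely cylinders in $R$ whose boundaries are curves in $M\cap R=\partial R-V_0$ that die in $H_1(R)$ but are capped off inside $M$. The paper's proof makes this precise by excising down to $H_2(W,M)\cong H_2(R,\partial R-V_0)$ and then identifying this, via the long exact sequence of the pair and $H_2(R)=0$, with $\ker\bigl(i_*:H_1(\partial R-V_0;\mathbb Z)\to H_1(R;\mathbb Z)\bigr)$; the entire content of the proposition is the computation of that kernel, which your outline never actually performs. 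Your Mayer--Vietoris decomposition, with overlap a single copy of $M$, cannot see this relative structure: the piece over $S^3\times[\tfrac12,1]$ is itself a cobordism from $M$ to $\Sigma$ containing all of the topology, so the decomposition is essentially vacuous.

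Two further problems. First, you invoke $M\simeq S^3$ to kill $H_1(M)$ and $H_2(M)$ in the long exact sequence of $(W,M)$, but the proposition does not assume the dihedral cover $M$ is a sphere (that hypothesis enters only later, in {Proposition~\ref{prop:signatureW}} and {Corollary~\ref{pi1W}}); the paper's excision argument needs no such assumption. Second, your aside that $[\beta_1]+[\beta_2]=0$ in $H_1(V-\beta;\mathbb Z)$ is false: the three boundary components of $V-\beta$ satisfy $[\alpha]+[\beta_1]+[\beta_2]=0$, and $[\alpha]$ is nonzero there since $\beta$ is non-separating, so quotienting by $([\beta_1],[\beta_2])$ drops the rank by two (to $2g-2$, matching the paper's $r$), not by one. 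The overall strategy --- exploit the explicit Cappell--Shaneson construction and track the $\tfrac{p-1}{2}$ identified sheets --- is the right instinct, but the reduction to an absolute $H_2$ computation is the wrong mechanism and would not produce the stated group.
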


\begin{proof}
Since Cappell and Shaneson's construction of $W$ is essential to our
computation, we review it here. Let $f:\Sigma \rightarrow S^{3}$ be the
cyclic $p$-fold cover of $\beta $. Since $p$ is prime, it is well known
that $\Sigma $ is a rational homology sphere~\cite{rolfsen1976knots}.
Let
\begin{equation*}
f\times 1_{I}: \Sigma \times [0, 1]\rightarrow S^{3}\times [0,1]
\end{equation*}
be the induced cyclic branched cover of $S^{3}\times [0,1]$. Next, let
\begin{equation*}
J:= f^{-1} (V\times [-\epsilon , \epsilon ]\times \{1\})
\end{equation*}
be the pre-image of a closed tubular neighborhood $V\times [-\epsilon
, \epsilon ]\times \{1\}$ of $V\times \{1\}$ in $S^{3}\times \{1\}$, and
let $T$ be its ``core'', namely $T:= f^{-1} (V\times \{0\}\times \{1\})$
with
\begin{equation*}
T\subset J\subset \Sigma \times \{1\}.
\end{equation*}
Then $J$ deformation-retracts to $T$, and $T$ consists of $p$ copies of
$V$ identified along $\beta $ and permuted cyclically by the group of
covering transformations of $f$.

Consider the involution $\bar{h}$ of $J$ defined in
\cite{CS1984linking} as the lift of the map
\begin{equation*}
h: V\times [-\epsilon , \epsilon ]\rightarrow V\times [-\epsilon ,
\epsilon ],
\end{equation*}
\begin{equation*}
h(u,t)\mapsto (u,-t)
\end{equation*}
fixing a chosen copy of $V$ in $f^{-1}(V\times \{0\}\times \{1\})$. Let
$q$ be the quotient map defined as
\begin{equation*}
q: \Sigma \rightarrow \Sigma /\{x\sim \bar{h}(x) | x\in J\} = \Sigma
/ \bar{h}.
\end{equation*}
Similarly, let
\begin{equation*}
W:= (\Sigma \times I) / (x\sim \bar{h}(x) | x\in J\subset \Sigma
\times \{1\})
\end{equation*}
and let
\begin{equation*}
M:=(\Sigma -J^{\circ })/\bar{h}.
\end{equation*}
By definition,
$\Sigma /\bar{h} = M\cup (J/\bar{h})$. As shown in
\cite{CS1984linking}, $M$ is the $p$-fold irregular dihedral cover of
$\alpha $ and $W$ is a cobordism between $M$ and the cyclic $p$-fold
cover $\Sigma = \Sigma \times \{0\}$ of $\beta $.

This completes the description of the construction of the pair
$(W, M)$ whose second homology group we are about to compute.

Note that $W$ is by definition the mapping cylinder of the quotient map
$q$. Let $R:= J/ \bar{h}$. We have
\begin{equation*}
H_{2}(W, M;\mathbb{Z})\cong H_{2}(M\cup R, M;\mathbb{Z})\cong H_{2}(R,
M\cap R;\mathbb{Z}),
\end{equation*}
where the second isomorphism is excision, and the first follows from the
fact that $W$ deformation-retracts onto $\Sigma / \bar{h}= M\cup R$.
Since
\begin{equation*}
M\cap R = \partial R - V_{0}
\end{equation*}
(following the notation of \cite{CS1984linking}, $V_{0}$ is the copy
of $V$ in $T$ fixed by $\bar{h}$), we can rewrite the above isomorphism
as
%
\begin{equation}
\label{eq:5}
H_{2}(W, M;\mathbb{Z})\cong H_{2}(R, \partial R - V_{0};\mathbb{Z}).
\end{equation}
The relevant portion of the long exact sequence of the pair
$(R, \partial R - V_{0})$ is
%
\begin{equation}
\label{LES-pair}
H_{2}(R;\mathbb{Z}) \rightarrow H_{2}(R, \partial R - V_{0};
\mathbb{Z})\rightarrow H_{1}( \partial R - V_{0};\mathbb{Z}) \rightarrow
H_{1}(R;\mathbb{Z}).
\end{equation}

We will shortly show that $H_{2}(R;\mathbb{Z}) = 0$ (see
Equation~{(\ref{H2R})}). Assuming this for the moment, the above exact
sequence, combined with Equation~{(\ref{eq:5})}, gives
%
\begin{equation}
H_{2}(W, M;\mathbb{Z})
\cong \ker (i_{\ast }: H_{1}( \partial R - V
_{0};\mathbb{Z}) \rightarrow H_{1}(R;\mathbb{Z}) ).
\end{equation}

Our goal, therefore, is compute this kernel. Note, furthermore, that we
are not simply interested in its rank over $\mathbb{Z}$; we want to
write down an explicit basis for $\ker (i_{\ast })$ in terms of lifts
to $M$ of curves in the complement of $\alpha \subset S^{3}$.

Recall that $V$ is a surface with boundary and that, by definition,
$\beta $ represents a non-zero primitive class in $H_{1}(V;
\mathbb{Z})$. Therefore, $\beta $ can be completed to a one-dimensional
subcomplex $C\vee \beta $ which $V$ deformation-retracts to, where
$C$ is the wedge of $2g-1$ circles, and $g$ the genus of $V$. Moreover,
we can perform the deformation retraction of $V$ onto such a one-complex
simultaneously on each copy of $V$ contained in $T$, fixing the curve
of intersection $\beta $. Therefore, $T$ deformation-retracts to a
one-complex containing $\beta $ wedged to $p$ copies of $C$, where
\begin{equation*}
H_{1}(C;\mathbb{Z})\cong H_{1}(V;\mathbb{Z})/[\beta ].
\end{equation*}
It follows that
\begin{equation*}
H_{2}(J; \mathbb{Z})\cong H_{2}(T; \mathbb{Z})\cong 0
\end{equation*}
and
\begin{equation*}
H_{1}(J; \mathbb{Z})\cong H_{1}(T; \mathbb{Z})\cong \oplus _{p} (H_{1}(V;
\mathbb{Z}) / [\beta ])\oplus \mathbb{Z},
\end{equation*}
where the singled-out copy of $\mathbb{Z}$ is generated by $[\beta ]$.

Furthermore, since the deformation--retraction of $J$ onto $T$ can be
chosen to commute with $\bar{h}$, $J/\bar{h}=R$ deformation-retracts to
$T/\bar{h}$, which is isomorphic to $\frac{p+1}{2}$ copies of $V$
identified along $\beta $. (This isomorphism is seen from the fact that
$V_{0}$ is fixed by $\bar{h}$, and the remaining $p-1$ copies of $V$ in
$T$ become pairwise identified in the quotient. All copies of
$\beta $ are identified to a single circle in both $T$ and $T/\bar{h}$.)
Therefore,
%
\begin{equation}
H_{1}(R;\mathbb{Z})\cong (H_{1}(V;\mathbb{Z})/[\beta ])^{
\frac{p+1}{2}}\oplus \mathbb{Z}.
\end{equation}
By the same reasoning as above, we can also conclude that $T/\bar{h}$
deformation-retracts to a one-complex, so, as claimed,
%
\begin{equation}
\label{H2R}
H_{2}(R; \mathbb{Z})=H_{2}(J/\bar{h}; \mathbb{Z})\cong H_{2}(T/
\bar{h}; \mathbb{Z})\cong 0.
\end{equation}

Next, we examine $\partial (J)$ and $\partial R$. To start,
$\partial (V\times [0,1]) \cong V\cup _{\alpha }V$. Therefore,
$\partial (J)$ can be thought of as the union of $p$ copies of
$(V-\beta )\cup _{\alpha }(V-\beta )$, which we label $V_{i}^{+}
\cup _{\alpha _{i}} V_{i}^{-}$, $0\leq i<p$, with further identifications
we now describe. Denote the copy of $\beta $ lying in $V_{i}^{\pm }$ by
$\beta _{i}^{\pm }$, and cut each $V_{i}^{\pm }$ along $\beta _{i}^{
\pm }\subset V_{i}^{\pm }$. Now, $V_{i}^{\pm }- \eta (\beta _{i}^{
\pm })$ is a connected surface with three boundary components,
$\alpha _{i}$, $\beta _{i, 1}^{\pm }$ and $\beta _{i, 2}^{\pm }$, where the
$\beta _{i,j}^{\pm }\subset V_{i}^{\pm }$ are labeled in such a way that
$\beta _{i,j}^{+}$ and $\beta _{i,j}^{-}$ correspond to $\beta _{i, j}
\times \{1\}$ and $\beta _{i, j}\times \{0\}$ in $V_{i}\times [0, 1]$;
that is, the projection map $V_{i}\times [0, 1]\to V_{i}$ sends
$\beta _{i,j}^{+}$ and $\beta _{i,j}^{-}$ to the same boundary component
of $V-\eta (\beta )$.

Now, the covering translation $\tau $ on $J$ acts in a neighborhood of
$\beta $ as rotation by $\frac{2\pi i}{p}$ degrees. With the labeling
described above, $\tau $ carries $\beta _{i, j}^{\pm }$ to $
\beta _{i+1\text{mod~}p, j}^{\pm }$. Moreover, in $\partial (J)$, we have the
following identifications: $\beta _{i,1}^{+}\sim
\beta _{i+\frac{p+1}{2}\text{mod~}p,2}^{+}$ and $\beta _{i,1}^{-}\sim
\beta _{i+\frac{p-1}{2}\text{mod~} p,2}^{-}$. Put differently, we can think of
$\partial (J)$ as obtained from $2p$ disjoint copies of $V-\beta $,
labeled $V_{i}^{\pm }- \beta _{i}^{\pm }$, by gluing $\alpha _{i}^{+}$ to
$\alpha _{i}^{-}$ and the $\beta _{i, j}^{\pm }$-s according to the
identifications specified above. Thus, $\partial (J)$ is a closed
surface. By considering its Euler characteristic, we find that its genus
is $(2g-1)p + 1$. In addition, from the above decomposition of
$\partial (J)$ we find that
%
\begin{equation}
H_{1}(\partial (J); \mathbb{Z})\cong ((H_{1}(V - \beta ; \mathbb{Z}))
/ ( [\beta _{1}], [\beta _{2}] ))^{2p}\oplus \mathbb{Z}^{2p+2}.
\end{equation}

Recall that $R$ is a $\mathbb{Z}/2\mathbb{Z}$ quotient of $J$, where the
$\mathbb{Z}/2\mathbb{Z}$ action fixes $V_{0}\times I$ and pairs off
$V_{i}^{+}$ with $V_{p-i}^{-}$ for $1\leq i\leq \frac{p-1}{2}$. It
follows that $\partial R - V_{0}$ is a surface of genus $p(g-1)+
\frac{p+1}{2}$ and we have,
%
\begin{equation}
\label{homology-boundary}
H_{1}(\partial R - V_{0}; \mathbb{Z})\cong ((H_{1}(V - \beta ;
\mathbb{Z})) / ([\beta _{1}], [\beta _{2}]))^{p}\oplus \mathbb{Z}^{p+1}.
\end{equation}

Recall that our aim is to compute
\begin{equation*}
\ker (i_{\ast }: H_{1}( \partial R - V_{0};\mathbb{Z}) \rightarrow H
_{1}(R;\mathbb{Z})).
\end{equation*}

Again, the idea behind writing $H_{1}(\partial R - V_{0};
\mathbb{Z})$ as in Equation~{(\ref{homology-boundary})} is to obtain a
convenient basis for this kernel, and to relate this basis to a basis
for the homology of $V$. Specifically, a virtue of our expression for $H_{1}( \partial R - V_{0};\mathbb{Z})$ is that a basis
for $H_{1}(V -
\beta ; \mathbb{Z})) / ([\beta _{1}], [\beta _{2}])$ can be extended to a basis for both $H_{1}(V;
\mathbb{Z})$ and $H_{1}(V-\beta ; \mathbb{Z})$. In particular, the
inclusion $i:\partial R-V_{0}\to R$ induces an injection ${i_{\ast }}
_{|}: H_{1}(V - \beta ; \mathbb{Z}) / ([\beta _{1}], [\beta _{2}])
\to H_{1}(R; \mathbb{Z})$ for each copy of $V - \beta \subset \partial
R-V_{0}$. Furthermore, for each $V_{j}\subset T/\bar{h}\simeq R$, the
inclusion $i: V_{j} \to R$ induces an injection on $H_{1}(V - \beta ;
\mathbb{Z}) / ([\beta _{1}], [\beta _{2}])$.

With this in mind, using Equations~{(\ref{H2R})} and {(\ref{homology-boundary})}, we rewrite
\begin{equation*}
i_{\ast }: H_{1}(\partial R - V_{0};\mathbb{Z}) \rightarrow H_{1}(R;
\mathbb{Z})
\end{equation*}
as
\begin{equation*}
i_{\ast }: \bigl ((H_{1}(V - \beta ; \mathbb{Z})) / ([\beta _{1}], [
\beta _{2}])\bigr )^{p}\oplus \mathbb{Z}^{p+1} \rightarrow (H_{1}(V;
\mathbb{Z})/[\beta ])^{\frac{p+1}{2}}\oplus \mathbb{Z}.
\end{equation*}

Note that $i_{\ast }$ maps the copy of $(H_{1}(V - \beta ; \mathbb{Z}))
/ ( [\beta _{1}], [\beta _{2}])$ coming from $V_{0}^{+}$ isomorphically
onto its image, and it ``pairs off'' the remaining $p-1$ copies of
$(H_{1}(V -\nobreak  \beta ; \mathbb{Z})) /\allowbreak  ( [\beta _{1}], [\beta _{2}])$,
including each of them into one of the remaining $\frac{p-1}{2}$ copies
of $H_{1}(V;\mathbb{Z})/[\beta ]$ in $R$. This contributes $H_{1}(V -
\beta ; \mathbb{Z})^{\frac{p-1}{2}}$ to $\ker (i_{\ast })$. In addition,
the generators for the $\mathbb{Z}^{p+1}$ summand in $H_{1}( \partial
R - V_{0};\mathbb{Z})$ can be chosen as follows. There are $
\mathbb{Z}^{\frac{p+1}{2}}$ generators which correspond to copies of
$\beta $ lying in the various copies of $V-\beta \subset (\partial R -
V_{0})$; they all map to the single $[\beta ]$ in the image,
contributing $\mathbb{Z}^{\frac{p-1}{2}}$ to $\ker i_{\ast }$. Finally,
there are an additional $\mathbb{Z}^{\frac{p+1}{2}}$ generators of the
$\mathbb{Z}^{p+1}$ summand in $H_{1}( \partial R - V_{0};\mathbb{Z})$
which are mapped injectively by $i_{\ast }$, onto classes in
$T/\bar{h}$ which are not in the image of $i_{\ast }(H_{1}(V - \beta
; \mathbb{Z})$ for any copy of $V$. Consequently, as we claimed,
\begin{equation*}
\ker (i_{\ast }) \cong (H_{1}(V - \beta ; \mathbb{Z})/([\beta _{1}], [
\beta _{2}]))^{\frac{p-1}{2}} \oplus \mathbb{Z}^{\frac{p-1}{2}}.\qedhere
\end{equation*}
\end{proof}

Furthermore, the above argument allows us to describe a basis for this kernel.

\begin{cor}
\label{basis}
Assume the notation of {Proposition~\ref{prop:homologyW}}. Further, let
$w^{1}, w^{2}, ... , w^{r}$ be a basis for $H_{1} (V-\beta ;
\mathbb{Z})/([\beta _{1}], [\beta _{2}])$, where $r=2g-2$ and $g$ is
the genus of $V$. Denote by $w^{i, \pm }_{j}, i\in \{1, ..., r\}, j
\in \{1, ..., p\}$ the pre-images of the $w^{i}$ lying in $f^{-1}(V
\times \{\pm 1\})$ so that $w^{i,\pm }_{j} \subset V^{\pm }_{j}$,
$f(w^{i,\pm }_{j})=w^{i}_{j}$ and $\tau w_{k}^{\pm }= w_{k+1\text{mod~} p}
^{\pm }$. Next, let $\bar{h}(w^{i,\pm }_{j})=:\overline{w}^{i, \pm }
_{j}\in M$. Finally, denote by $\overline{\beta }_{k}, k=0,1, ...,
\frac{p-1}{2}$ the $\frac{p+1}{2}$ generators of $H_{1}(\partial R - V
_{0};\mathbb{Z})$ which are represented by copies of $\beta $. Then, a
basis for $\ker i_{\ast }$ is given by:
%
\begin{equation}
\label{generators}
\{[\overline{w}^{i, +}_{k} - \overline{w}^{i, -}_{k}], [\overline{
\beta }_{k} -\overline{\beta }_{k-1}]\}_{i=1, ..., r; ~k= 1, ..., 
\frac{p-1}{2}}
\end{equation}

\end{cor}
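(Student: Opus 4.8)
The plan is to read off the basis for $\ker i_\ast$ directly from the proof of {Proposition~\ref{prop:homologyW}}, where the kernel was already shown to decompose as $(H_1(V-\beta;\mathbb{Z})/([\beta_1],[\beta_2]))^{\frac{p-1}{2}}\oplus\mathbb{Z}^{\frac{p-1}{2}}$. The two summands arise from two distinct sources in the long exact sequence analysis, and I would treat them separately.

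First I would handle the $(H_1(V-\beta;\mathbb{Z})/([\beta_1],[\beta_2]))^{\frac{p-1}{2}}$ part. In the proof, this came from the fact that $i_\ast$ ``pairs off'' the $p-1$ copies of $H_1(V-\beta)/([\beta_1],[\beta_2])$ coming from $V_i^+$ and $V_{p-i}^-$ (for $1\le i\le \frac{p-1}{2}$), sending both copies into the same summand $H_1(V;\mathbb{Z})/[\beta]$ of $H_1(R;\mathbb{Z})$ via injections. Hence for each basis element $w^i$ of $H_1(V-\beta)/([\beta_1],[\beta_2])$ and each $k\in\{1,\dots,\frac{p-1}{2}\}$, the difference of its two lifts into the paired copies maps to zero. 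After passing to the quotient $\Sigma/\bar h$, these lifts are the classes $\overline{w}^{i,+}_k$ and $\overline{w}^{i,-}_k$ in $M$ (equivalently in $\partial R - V_0\subset R$), so $[\overline{w}^{i,+}_k - \overline{w}^{i,-}_k]$ lies in $\ker i_\ast$. Since the $w^i$ form a basis and the injections on each copy are independent, these $r\cdot\frac{p-1}{2}$ classes are linearly independent and span the first summand.

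For the $\mathbb{Z}^{\frac{p-1}{2}}$ part, I would use the analysis of the $\mathbb{Z}^{p+1}$ summand in $H_1(\partial R - V_0;\mathbb{Z})$. Among its generators, $\frac{p+1}{2}$ are represented by copies of $\beta$ — these are the $\overline{\beta}_k$, $k=0,1,\dots,\frac{p-1}{2}$ — and all of them map to the single generator $[\beta]\in H_1(R;\mathbb{Z})$. Therefore the $\frac{p-1}{2}$ differences $[\overline{\beta}_k - \overline{\beta}_{k-1}]$, $k=1,\dots,\frac{p-1}{2}$, lie in $\ker i_\ast$; they are linearly independent since the $\overline{\beta}_k$ are independent in $H_1(\partial R - V_0;\mathbb{Z})$, and they span the rank-$\frac{p-1}{2}$ kernel contribution coming from the $\beta$-classes (the remaining $\frac{p+1}{2}$ generators of $\mathbb{Z}^{p+1}$ map injectively and contribute nothing). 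Combining the two lists gives exactly $r\cdot\frac{p-1}{2} + \frac{p-1}{2}$ classes, matching the rank of $\ker i_\ast$ computed in {Proposition~\ref{prop:homologyW}}, so linear independence together with the correct count forces these to be a basis.

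The main obstacle is bookkeeping rather than substance: one must be careful that the labeling conventions for the lifts $w^{i,\pm}_j$ (with $\tau w_k^\pm = w_{k+1\bmod p}^\pm$) and the identifications $\beta_{i,1}^+\sim\beta_{i+\frac{p+1}{2},2}^+$, $\beta_{i,1}^-\sim\beta_{i+\frac{p-1}{2},2}^-$ in $\partial(J)$ are consistent with the $\bar h$-pairing $V_i^+\leftrightarrow V_{p-i}^-$, so that the indices $k$ in {(\ref{generators})} genuinely enumerate the $\frac{p-1}{2}$ paired copies in $\partial R - V_0$ and the $\frac{p+1}{2}$ distinct $\beta$-curves. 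Once the indexing is pinned down, the statement follows immediately from the exact-sequence computation already carried out, with no further homological input needed.
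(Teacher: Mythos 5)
Your argument is exactly the one the paper intends: the printed proof of this corollary is a single sentence deferring to the last paragraph of the proof of Proposition~\ref{prop:homologyW}, and your proposal simply unpacks that paragraph — the paired-off copies of $H_{1}(V-\beta;\mathbb{Z})/([\beta_{1}],[\beta_{2}])$ yielding the differences $[\overline{w}^{i,+}_{k}-\overline{w}^{i,-}_{k}]$, the $\frac{p+1}{2}$ copies of $\beta$ all mapping to a single $[\beta]$ yielding the differences $[\overline{\beta}_{k}-\overline{\beta}_{k-1}]$, and a rank count to conclude. This matches the paper's route, with the bookkeeping caveat you raise being the only point requiring care.
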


\begin{proof}
The statement follows from the last paragraph of the proof of
{Proposition~\ref{prop:homologyW}}.
\end{proof}

\begin{rem}
The heavy notation we have had to resort to here deserves a comment.
Since $\bar{h}$ identifies $V_{j}^{+}$ with $V_{p-j}^{-}$, we have
$\overline{w}^{i,\pm }_{j}=\overline{w}^{i,\mp }_{p-j}$. Secondly, there
are many choices of $\frac{p+1}{2}$ curves $\beta _{k}$ so that the
classes $[\beta _{k}]$ are independent generators of $H_{1}( \partial
R - V_{0};\mathbb{Z})$. We note that it is possible to impose the extra
condition that $\overline{\beta }_{k}-\overline{\beta }_{k-1}$, $k=1, 2,
..., \frac{p-1}{2}$ bounds a cylinder $\beta \times [-1, 1]$ in $R$. We
do this by choosing for the $\overline{\beta }_{k}$-s ``consecutive''
copies of~$\beta $ as we move counter-clockwise in $\partial R$,
starting, for instance, with the copy of~$\beta _{1}$ lying
in~$V_{0}^{+}$. This observation will allow us to simplify the proof of
{Proposition~\ref{prop:signatureW}}.
\end{rem}

We are now ready to give a formula for the signature of $W$.

\begin{prop}
\label{prop:signatureW}
Let $\alpha $ be a knot which admits an irregular dihedral $p$-fold
cover for an odd prime $p$. In addition, assume that this cover is
$S^{3}$. Using the notation of {Corollary~\ref{basis}}, let $A$ be the
matrix of linking numbers in $S^{3}$ of the following set of links:
%
\begin{equation}
\label{basis-kernel}
S:=\{[\overline{w}^{i, +}_{k} - \overline{w}^{i, -}_{k}], [\overline{
\beta }_{k} -\overline{\beta }_{k-1}]\}_{i=1, ..., r; ~k= 1, ...,
\frac{p-1}{2}},
\end{equation}
where the orientation of each curve is compatible with a chosen
orientation on the corresponding curve, $w^{i}$ or $\beta $, in $V$.
Then, $\sigma (W) = \sigma (A)$.

\end{prop}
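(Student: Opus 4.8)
The plan is to identify the intersection form on $H_2(W, M; \mathbb{Z})$ with the linking matrix $A$, and then to deduce $\sigma(W) = \sigma(A)$ from this identification together with a count of ranks. By Proposition~\ref{prop:homologyW} and Corollary~\ref{basis}, the group $H_2(W, M; \mathbb{Z})$ is free of rank $r \cdot \tfrac{p-1}{2} + \tfrac{p-1}{2}$, with an explicit basis obtained from the curves in $S$: each generator of $\ker i_\ast \subset H_1(\partial R - V_0; \mathbb{Z})$ of the form $[\overline{w}^{i,+}_k - \overline{w}^{i,-}_k]$ or $[\overline{\beta}_k - \overline{\beta}_{k-1}]$ bounds a relative $2$-chain in $(R, \partial R - V_0) \simeq (W, M)$. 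First I would fix, for each such boundary class, an explicit surface it bounds in $W$: for the classes $[\overline{\beta}_k - \overline{\beta}_{k-1}]$ this is the cylinder $\beta \times [-1,1] \subset R$ noted in the Remark after Corollary~\ref{basis}, while for $[\overline{w}^{i,+}_k - \overline{w}^{i,-}_k]$ one uses the product region $w^i \times I$ in the relevant copy of $V \times I$ inside $J$, pushed into $R = J/\bar h$. These bounding surfaces give explicit representatives of the basis of $H_2(W, M;\mathbb{Z})$ guaranteed by Corollary~\ref{basis}.

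The key step is then to compute the intersection pairing $H_2(W, M) \times H_2(W, M) \to \mathbb{Z}$ (equivalently, via Poincaré–Lefschetz duality, the pairing $H_2(W, M) \times H_2(W, \partial W - M) \to \mathbb{Z}$, using $M \sqcup \Sigma = \partial W$ and $H_*(W, \Sigma; \mathbb{Q}) \cong H_*(W, M; \mathbb{Q})$ since $\Sigma$ is a rational homology sphere), and to show it is given by the matrix $A$ of $S^3$-linking numbers of the curves in $S$. The mechanism is the standard one relating intersection numbers of relative $2$-cycles in a $4$-manifold to linking numbers of their boundary $1$-cycles: if $F_1, F_2$ are the chosen bounding surfaces for curves $c_1, c_2 \in S$, then $F_1 \cdot F_2$ equals the linking number of $c_1$ with $c_2$ computed in the boundary three-manifold, after pushing one surface off the other and using that the only contributions come from a collar of $\partial W$. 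Here I would push everything into a collar $\partial W \times [0,1]$ of the $M$-boundary, where $W$ is cyclically covered over $S^3 \times [0,1]$; one reduces the count to linking numbers of the curves $\overline{w}^{i,\pm}_k$ and $\overline{\beta}_k$ downstairs in $S^3$, since the curves in $S$ are built from lifts of curves in the Seifert surface $V \subset S^3$ and the relative $2$-chains descend from product chains upstairs. One must be careful about framings: the chosen trivialization of the normal bundle of $\beta^\ast$ (the normals $\vec n_1, \vec n_2, \vec n_3$ from the proof of Theorem~\ref{thm:necessary}), and the compatibility of orientations fixed in the statement, ensure the sign conventions make the self-intersection entries $A_{ii}$ come out as linking numbers of a curve with a specific framed pushoff, matching $L_V$ where relevant.

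Finally, once the intersection form on $H_2(W, M; \mathbb{Z})$ is identified with $A$, I would conclude by the definition of the signature of a $4$-manifold with boundary: $\sigma(W)$ is the signature of the intersection pairing on the image of $H_2(W, \partial W) \to H_2(W)$, equivalently (over $\mathbb{Q}$, and $\Sigma$ being a $\mathbb{Q}HS^3$) the signature of the pairing on $H_2(W, M; \mathbb{Q})$, which is exactly $\sigma(A)$. A small point to check here is nondegeneracy/the handling of the radical: if $A$ is degenerate this just reflects the kernel of the map from absolute to relative homology and does not affect the signature, since signature is insensitive to the radical. The main obstacle I anticipate is the second step — carefully setting up the explicit bounding surfaces in $W$ and tracking orientations and framings through the quotient $\Sigma \to \Sigma/\bar h$ and the cyclic cover $\Sigma \to S^3$ so that the geometric intersection count is unambiguously the $S^3$-linking matrix $A$ with the correct signs; the rank bookkeeping and the final signature deduction are then routine.
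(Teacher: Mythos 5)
Your proposal follows essentially the same route as the paper's proof: represent the basis classes of $H_2(W,M;\mathbb{Z})$ from Corollary~\ref{basis} by product cylinders properly embedded in $R\subset W$, cap them off with Seifert surfaces in $M\cong S^{3}$ pushed into a collar, and identify each intersection number with the linking number in $M$ of the corresponding boundary curves, so that the intersection matrix is $A$ and $\sigma(W)=\sigma(A)$. The details you defer as the anticipated obstacle (the case analysis for disjoint curves, for distinct intersecting cylinders in the same $V_{j}\times I$, and for the diagonal entries via the pushoff framing in $V_{j}$) are exactly the content the paper's proof supplies.
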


\begin{proof}
We wish to compute the intersection form on the image $i_{\ast }(H
_{2}(W; \mathbb{Z}))$ in $H_{2}(W, S^{3}\cup \Sigma ; \mathbb{Z})$.
Since $p$ is prime, $\Sigma $ is a rational homology
sphere~\cite{rolfsen1976knots}. It follows that
\begin{equation*}
H_{2}(W, M; \mathbb{Z})\cong i_{\ast }(H_{2}(W; \mathbb{Z}))\subset H
_{2}(W, S^{3}\cup \Sigma ; \mathbb{Z}).
\end{equation*}
By {Proposition~\ref{prop:homologyW}} we already know that
\begin{equation*}
H_{2}(W, M; \mathbb{Z})\cong \ker (i_{\ast }: H_{1}( \partial R - V
_{0};\mathbb{Z}) \rightarrow H_{1}(R;\mathbb{Z})) =: K.
\end{equation*}
Furthermore, by {Corollary~\ref{basis}}, the set of classes in
$S\subset H_{1}( \partial R - V_{0};\mathbb{Z})$ defined above forms a
basis for $K$. We use the isomorphism $H_{2}(W, M; \mathbb{Z})\cong K$
to obtain an explicit basis for $H_{2}(W, M; \mathbb{Z})$ consisting of
surfaces with boundary which are properly embedded in $W$.

Recall that the isomorphism between $H_{2}(W, M; \mathbb{Z})$ and
$K$ is given by Equation~{(\ref{eq:5})}, together with the boundary map
$\partial $ in the long exact sequence~{(\ref{LES-pair})}. By our choice of
basis for $K$, for any element $u\in S$, $u$ is the boundary of a
cylinder $S^{1}\times I =: U$ properly embedded in $(R, \partial R- V
_{0})\subset (W, M)$. We use this cylinder to represent the class
$[U]\in H_{2}(W, M; \mathbb{Z})$ corresponding to $u$ under the above
isomorphism $H_{2}(W, M; \mathbb{Z})\cong K$. Next, given $u_{1}, u
_{2}\in S$, we can write $u_{1} = a_{1} - a_{2}$ and $u_{2} = b_{1} -
b_{2}$, where $a_{i}, b_{i}$ are oriented curves in the dihedral cover
$M$. Denote by $U_{i}, i=1, 2$, the cylinder $S^{1}\times I \subset V
_{j}\times I$ with $\partial U_{i}=u_{i}$. (Here $V_{j}$ denotes the
lift of $V$ for which $a_{1}-a_{2}$, respectively $b_{1}-b_{2}$, lies
on the boundary of $V_{j}\times I$.) Now, if $F_{i}$ is any Seifert
surface for $u_{i}$ in $M\cong S^{3}$, we have $U_{i}\cup _{u_{i}} F
_{i}\in H_{2}(W; \mathbb{Z})$ and $i_{\ast }: H_{2}(W; \mathbb{Z})
\to H_{2}(W, S^{3}\cup \Sigma ; \mathbb{Z})$ carries $U_{i}\cup _{u
_{i}} F_{i}$ to $U_{i}$. We fix two Seifert surfaces, $F_{1}$ and
$F_{2}$, for $u_1$ and $u_2$, respectively, and use the classes $U_{1}\cup _{u_{1}} F_{1}$ and $U_{2}
\cup _{u_{2}} F_{2}$ to compute the intersection number $U_{1}\centerdot
U_{2}$.

By giving $W$ a little collar, $M\times [0, \epsilon ]$, and ``pushing 
in'' $U_{2}\cup _{u_{2}} F_{2}$ ever so slightly, we can assume that
$F_{2}$ lies in $M\times \{\epsilon \}$, and $U_{2}\cup _{u_{2}} F_{2}$
is disjoint from $M \times [0, \epsilon )$. Since $F_{1}\subset M
\times \{0\}$, in order to compute $U_{1}\centerdot U_{2}$, it now
suffices to consider the intersection of~$U_{1}$ with $U_{2}\cup _{u
_{2}} F_{2}$. We consider several cases. If the curves $a_{1}$ and
$b_{1}$ are disjoint, then so are $U_{1}=a_{1}\times I$ and
$U_{2}=b_{2}\times I$, regardless of whether the $a_{i}$, $b_{i}$ live
in the same lift of~$V_{j}$ or in different lifts. In this case, the
intersection is simply $U_{1}\centerdot F_{2} = lk (a_{1}-a_{2}, b
_{1}-b_{2})$, since $F_{2}\subset M\times \{\epsilon \}$, $U_{1}
\cap (M\times \{\epsilon \})= a_{1}-a_{2}$ and $F_{2}$ is a Seifert 
surface for $b_{1}-b_{2}$, so that, putting everything together, we have
$U_{1}\centerdot F_{2} = (a_{1}-a_{2})\cap F_{2} = lk (a_{1}-a_{2}, b
_{1}-b_{2})$ by definition. Secondly, $U_{1}$ and $U_{2}$ can be
distinct but intersecting cylinders. This can only happen if both live
in the same lift of $V\times I$, which we again denote $V_{j}\times I$.
In this case, we use the normal to $V_{j}\times I$ in $W$ to push off
$U_{1}$ away from $V_{j}\times I$ and thus from $U_{2}$. Again, we find
that $U_{1}\centerdot U_{2} = U_{1}\centerdot F_{2} = lk (a_{1}-a_{2},
b_{1}-b_{2})$. Lastly, we consider the case where $U_{1}=U_{2}$. For
some choice of $j$ we have $U_{1}\subset V_{j}\times I$ with
$\partial U_{1} = (a_{1} - a_{2}) \subset V_{j}\times \{0, 1\}$. We can
push $a_{1}$ off itself using its (positive, say) normal in
$V_{j}\times \{0\}$. This push-off extends across $U_{1} = (a_{1}
\times I) \subset (V_{j}\times I)$, so the cylinder can be made disjoint
from itself. Again, we conclude that $U_{1}\centerdot U_{2}= U_{1}
\centerdot F_{2} = lk (a_{1}-a_{2}, a_{1}-a_{2})$, where the
self-linking number is computed using the normal to $a_{1}-a_{2}$ in
$V_{j}\times \{0, 1\}$. Therefore, the matrix of linking numbers between
elements of our basis for $K$ is also the intersection matrix for
$W=(W(\alpha , \beta ))$. This completes the proof.
\end{proof}

\begin{rem}
We note that the self-linking with respect to the normal to
$a_{1}-a_{2}$ in $V_{j}\times \{0, 1\}$ is equal to the self-linking
with respect to the restriction to $a_{1}-a_{2}$ of the normal to
$V_{j}\times \{0, 1\}$ in $M\cong S^{3}$, since the two vectors are
everywhere linearly independent. This is useful for computations, since
the normal to $V_{j}$ in the dihedral cover is just the lift of the
normal to $V$ in $S_{3}$.
\end{rem}

The Proof of {Proposition~\ref{prop:homologyW}} also allows us to compute
the fundamental group of the manifold $W(\alpha , \beta )$ for knots
$\alpha $ which can arise as singularities of dihedral branched covers
between four-manifolds.

\begin{cor}
\label{pi1W}
Let $p$ be an odd prime and let $\alpha $ be a knot which admits a
$p$-fold irregular dihedral cover. Assume moreover that this cover
homeomorphic to $S^{3}$. Let $\beta $ be a characteristic knot for
$\alpha $ and let $W(\alpha , \beta )$ be the cobordism between
$S^{3}$ and the $p$-fold cyclic cover of $\beta $ constructed
in~\cite{CS1984linking}. Then $W(\alpha , \beta )$ is
simply-connected.
\end{cor}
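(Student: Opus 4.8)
The plan is to exploit the explicit description of $W=W(\alpha,\beta)$ built up in the proof of Proposition~\ref{prop:homologyW}, namely that $W$ deformation-retracts onto $\Sigma/\bar h = M \cup R$, where $R = J/\bar h$ and $M$ is the $p$-fold irregular dihedral cover of $\alpha$. By hypothesis $M\cong S^3$, so $M$ is simply-connected; thus by van Kampen it suffices to understand $\pi_1(R)$ and $\pi_1(M\cap R)$ and show the inclusion $M\cap R \hookrightarrow R$ induces a surjection on $\pi_1$. First I would recall that $R$ deformation-retracts onto $T/\bar h$, which is $\frac{p+1}{2}$ copies of the Seifert surface $V$ all identified along the single curve $\beta$; likewise $M\cap R = \partial R - V_0$ is the closed surface described in the proof, built from copies of $V-\beta$ glued along the $\alpha_i$ and $\beta_{i,j}$ curves.

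Next I would run van Kampen on $W \simeq M \cup_{M\cap R} R$. Since $\pi_1(M)=1$, we get $\pi_1(W) \cong \pi_1(R)/N$, where $N$ is the normal closure of the image of $\pi_1(M\cap R)\to \pi_1(R)$. So the crux is to show that $\pi_1(M\cap R) \to \pi_1(R)$ is surjective: then $N=\pi_1(R)$ and $\pi_1(W)=1$. Here I would use the wedge-of-circles model: $R\simeq T/\bar h$ is homotopy equivalent to a one-complex, a wedge of $\beta$ with $\frac{p+1}{2}$ copies of $C$ (the wedge of $2g-1$ circles onto which $V$ deformation-retracts rel $\beta$), so $\pi_1(R)$ is free on these loops. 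The surface $\partial R - V_0$ contains, for each copy $V_j$ of $V$ inside $T/\bar h$, an embedded copy of $V-\beta$; the inclusion $V-\beta \hookrightarrow V$ is $\pi_1$-surjective (removing a neighborhood of a curve only adds generators), and since $V_j\hookrightarrow R$ is, on $\pi_1$, the inclusion of one free factor $\pi_1(C_j)$ plus the shared loop $\beta$, running over all $j$ hits every free generator of $\pi_1(R)$. The loop $\beta$ itself is visible as a boundary curve in $\partial R - V_0$ as well. Hence the image of $\pi_1(\partial R - V_0)$ already contains a generating set for $\pi_1(R)$, giving the desired surjectivity.

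The main obstacle I anticipate is bookkeeping: making precise that the inclusion $\partial R - V_0 \hookrightarrow R$, composed with the deformation retraction $R \simeq T/\bar h$, really does hit each free factor $\pi_1(C_j)$ and the loop $[\beta]$ — i.e. that no generator of $\pi_1(R)$ is "lost" in passing to the boundary. This is essentially a careful reading of the identification pattern for the $V_i^\pm - \beta_i^\pm$ and the action of $\bar h$ (pairing $V_i^+$ with $V_{p-i}^-$, fixing $V_0$) already laid out in the proof of Proposition~\ref{prop:homologyW}; one checks that each $V_j \subset T/\bar h$ has a preimage copy of $V - \beta$ sitting inside $\partial R - V_0$ whose $\pi_1$ maps onto $\pi_1(V_j)$, and that these together with $[\beta]$ generate. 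An alternative, slicker route — which I would mention as a remark — is to instead apply van Kampen to $W = (W - N') \cup N'$ where $N' = S^1\times B^3$ is the tubular neighborhood of $h^{-1}(\beta^*)$ identified in the proof of Theorem~\ref{thm:necessary}; but the retraction argument above seems the most direct given what has already been set up.
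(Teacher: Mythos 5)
Your overall architecture is exactly the paper's: deformation-retract $W$ onto $M\cup R$ with $M\cap R=\partial R - V_{0}$, use $\pi _{1}(M)=1$, and reduce everything to the surjectivity of $i_{\ast }\colon \pi _{1}(\partial R - V_{0})\to \pi _{1}(R)$, which the paper simply cites from the analysis in Proposition~\ref{prop:homologyW}. The problem is in your justification of that surjectivity. The claim that ``the inclusion $V-\beta \hookrightarrow V$ is $\pi _{1}$-surjective (removing a neighborhood of a curve only adds generators)'' is false here: since $[\beta ]$ is a non-zero primitive class in $H_{1}(V;\mathbb{Z})$, the curve $\beta $ is non-separating, and every loop supported in $V-\beta $ has zero algebraic intersection number with $\beta $. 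Because the intersection pairing on $H_{1}(V;\mathbb{Z})\cong \mathbb{Z}^{2g}$ is unimodular, there is a class dual to $[\beta ]$ which therefore cannot lie in the image of $H_{1}(V-\beta ;\mathbb{Z})\to H_{1}(V;\mathbb{Z})$ (the image has rank $2g-1$). So the map already fails to be surjective on $H_{1}$, a fortiori on $\pi _{1}$, and ``running over all $j$'' misses, in each free factor of $\pi _{1}(R)$, precisely the generator dual to $\beta $.

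The conclusion you are after is nevertheless true, and the repair is contained in the paper's own homology computation: the missing generators are carried by loops in $\partial R - V_{0}$ that are \emph{not} contained in any single copy of $V-\beta $ but instead traverse the identification circles $\beta _{i,j}^{\pm }$ between different copies. These are exactly the ``additional $\mathbb{Z}^{\frac{p+1}{2}}$ generators'' of $H_{1}(\partial R - V_{0};\mathbb{Z})$ singled out at the end of the proof of Proposition~\ref{prop:homologyW}, which are shown to map onto classes in $T/\bar{h}$ not hit by any $i_{\ast }(H_{1}(V-\beta ;\mathbb{Z}))$. Once you include these crossing loops in your generating set (together with the copies of $V-\beta $ and the boundary copy of $\beta $), the surjectivity of $\pi _{1}(\partial R - V_{0})\to \pi _{1}(R)$ follows, and the remainder of your van Kampen argument is identical to the paper's proof.
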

\begin{proof}
We assume the notation of the proof of {Proposition~\ref{prop:homologyW}}.
(In this notation, the additional assumption of this Corollary is that
$M\cong S^{3}$.) We have seen that $W(\alpha , \beta )$ is homotopy
equivalent to $M\cup R$ and that $M\cap R= \partial R - V_{0}$. We also
know that $i_{\ast }: \pi _{1}(\partial R - V_{0}, a_{0}) \to \pi _{1}(R,
a_{0})$ is surjective. On the other hand, any loop in $ \pi _{1}(
\partial R - V_{0}, a_{0}) = \pi _{1}(M\cap R, a_{0})$ is contractible
in $M$ since $\pi _{1}(M; a_{0}) = 0$. Therefore, by van Kampen's
Theorem, $\pi _{1}(M\cup R, a_{0}) = 0 = \pi _{1}(W(\alpha , \beta ), a
_{0})$.
\end{proof}

Finally, we show that the defect to the signature of a branched cover
arising from the presence of a singularity $\alpha $ is an invariant of
the knot type $\alpha $.

\begin{prop}\label{prop:invariant}
Let $p$ be an odd square-free integer, and let $\alpha \subset S^{3}$
be knot which arises as the singularity of an irregular dihedral
$p$-fold cover between four-manifolds. Assume that $p^{2}$ does
not\footnote{One could allow $p^{2}$ to divide $\Delta (-1)$. In this
case, $\Xi _{p}$ would not necessarily be an invariant of the knot type
$\alpha $ but, rather, of $\alpha $ together with a specified
representation of $\pi _{1}(S^{3}-\alpha , x_{0}) \twoheadrightarrow D
_{p}$.} divide $\Delta (-1)$, where $\Delta (t)$ is the Alexander
polynomial of $\alpha $. In the notation of {Theorem~\ref{thm:necessary}},
the integer $ \Xi _{p}(\alpha )$, defined as
%
\begin{equation}
\label{valueXi}
\Xi _{p}(\alpha )= \frac{p^{2}-1}{6p}L_{V}(\beta , \beta ) + \sigma (W(
\alpha , \beta )) + \sum _{i=1}^{p-1} \sigma _{\zeta ^{i}}(\beta )
\end{equation}
is an invariant of the knot type $\alpha $.
\end{prop}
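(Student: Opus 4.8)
The plan is to show that $\Xi_p(\alpha)$, as computed from Equation~(\ref{valueXi}), does not depend on the choice of characteristic knot $\beta$ (within the equivalence class determined by a fixed dihedral representation). The starting point is {Theorem~\ref{thm:necessary}}: if $\alpha$ arises as a singularity of a dihedral $p$-fold cover $f\colon Y\to X$ between four-manifolds, then for \emph{any} corresponding characteristic knot $\beta$, Equation~(\ref{eq:sigma}) holds with the \emph{same} left-hand side $\sigma(Y)$ and the \emph{same} terms $p\sigma(X)$ and $\frac{p-1}{4}e(B)$. Hence $\Xi_p(\alpha)$ is forced to equal $p\sigma(X)-\frac{p-1}{4}e(B)-\sigma(Y)$, which is manifestly independent of $\beta$. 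So the heart of the argument is purely to exhibit \emph{one} pair $(X,B,f)$ realizing $\alpha$ as a singularity, so that this rigidity argument has something to bite on; everything else is bookkeeping about which representations/characteristic-knot equivalence classes are involved.

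First I would construct a concrete four-manifold cover with $\alpha$ as an isolated singularity on the branching set. The natural candidate: take $B=S^2\subset S^4$ to be the spun knot of $\alpha$, or more simply, take $X$ to be a branched cover obtained from the cone construction. Concretely, recall from the proof of {Theorem~\ref{thm:necessary}} that starting from $W(\alpha,\beta)$ and $Q$ we built $h\colon Z\to D^4$, a $p$-fold branched cover with $\partial Z=S^3$ and $\partial(\text{base})=S^3$, restricting on the boundary to the dihedral cover of $\alpha$ — which by hypothesis (and by the square-free condition) is $S^3$. Capping off: glue $Z$ to the cone on its boundary $S^3$, and glue $D^4$ to the cone on $\alpha\subset S^3$; the resulting map is a $p$-fold dihedral cover $\hat Z\to S^4$ branched over a $2$-sphere $\hat B$ with a single cone singularity of type $\alpha$ (and $\hat Z$ is a closed PL four-manifold because $M\cong S^3$ implies the cone point has a manifold neighborhood upstairs). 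This realizes the setup of {Theorem~\ref{thm:necessary}} with $X=S^4$, so $\sigma(X)=0$ and $e(\hat B)$ is computed from the construction; the rigidity argument above then pins down $\Xi_p(\alpha)=-\sigma(\hat Z)$, which does not see $\beta$.

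The step I expect to be the main obstacle — or at least the one needing care — is handling the \emph{non-uniqueness of the dihedral representation} and matching equivalence classes of characteristic knots, i.e.\ making precise the content of footnote~\ref{foot1}. Two different characteristic knots $\beta,\beta'$ for $\alpha$ may correspond to genuinely inequivalent surjections $\pi_1(S^3-\alpha)\twoheadrightarrow D_p$, in which case there is no reason for $\Xi_p$ to agree; the claim is only that $\Xi_p$ is an invariant of $\alpha$ \emph{together with} the representation, and it is a true invariant of $\alpha$ alone exactly when the representation is unique up to equivalence (which is where the hypothesis $p^2\nmid\Delta(-1)$ enters — by Fox's theorem / the structure of $H_1$ of the double branched cover, the number of conjugacy classes of $\mathbb{Z}/p$-colorings is controlled by $\Delta(-1)$, and $p^2\nmid\Delta(-1)$ forces a unique $D_p$-representation up to equivalence). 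So in the writeup I would: (i) fix a dihedral representation $\phi$, (ii) recall from \cite{CS1984linking} that the characteristic knots corresponding to $\phi$ form a single equivalence class under the moves generated by handle-slides/stabilizations of the Seifert surface $V$, (iii) observe that the branched-cover construction above can be run with \emph{any} $\beta$ in that class and always produces a cover of $(S^4,\hat B)$ realizing the \emph{same} singularity $\alpha$ with the representation $\phi$, so {Theorem~\ref{thm:necessary}} applied to that fixed cover forces $\Xi_p(\alpha)$ to be independent of the choice of $\beta$ within the class, and (iv) invoke $p^2\nmid\Delta(-1)$ to conclude there is only one such class, hence $\Xi_p(\alpha)$ is a knot invariant. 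A secondary technical point is checking that $e(\hat B)$ — the only other $\beta$-free quantity besides $\sigma(\hat Z)$ appearing in the identity — is itself well defined and independent of $\beta$; but $\hat B$ and $\hat Z$ depend only on $\alpha$ and $\phi$ up to the ambiguity already accounted for, so this is immediate once the construction is in place.
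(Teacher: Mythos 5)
Your central rigidity argument is exactly the paper's: Theorem~\ref{thm:necessary} holds for \emph{any} characteristic knot $\beta$ corresponding to the fixed dihedral representation, so $\Xi_p(\alpha)$ is forced to equal $p\sigma(X)-\frac{p-1}{4}e(B)-\sigma(Y)$, a quantity that does not see $\beta$; conversely, formula~(\ref{valueXi}) does not see the cover, so the value is also independent of which cover realizes $\alpha$ as a singularity; and the hypothesis $p^2\nmid\Delta(-1)$ pins down the dihedral representation (hence the equivalence class of characteristic knots) uniquely. This is precisely how the paper argues.

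The one step of your writeup that would fail is the explicit construction of a cover $\hat Z\to S^4$ by coning off $h\colon Z\to D^4$. The branching set of $h$ is the two-complex $T$ of Equation~(\ref{define_T}), which contains the pushed-in Seifert surface $V'$ for $\beta$ and is non-manifold along the entire circle $\beta\times\{\frac{1}{2}\}$ (locally $S^1\times$``$\top$''). Adjoining the cone on $\alpha$ therefore does not yield a two-sphere with a single cone singularity of type $\alpha$, so Theorem~\ref{thm:necessary} does not apply to $\hat Z\to S^4$ as you claim; eliminating that circle of non-manifold points is exactly what Steps 1--2 of the proof of Theorem~\ref{thm:necessary} are for, and building a genuine cover of $(S^4,S^2)$ with singularity $\alpha$ requires additional hypotheses (e.g.\ $\alpha$ slice, as in Proposition~\ref{covers_of_spheres}). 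Fortunately this construction is unnecessary: the existence of \emph{some} cover realizing $\alpha$ as a singularity is part of the hypothesis of the proposition. Once you delete the capping-off detour and simply invoke that hypothesis, your argument is complete and coincides with the paper's.
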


\begin{proof}
Since $\alpha $ arises as a singularity of an irregular dihedral
$p$-fold cover, by {Theorem~\ref{thm:necessary}}, $\alpha $ itself admits
an irregular dihedral $p$-fold cover. Since $p^{2}$ does not divide
$\Delta (-1)$, this cover is unique (see footnote on p.~166
of~\cite{CS1984linking} or, for a more thorough discussion,~\cite{fox1970metacyclic}).

When both $\alpha $ and $\beta $ are fixed, it is clear that each of the
terms $\frac{p^{2}-1}{6p} L_{V}(\beta , \beta )$, $\sigma (W(\alpha ,
\beta ))$ and $\sum _{i=1}^{p-1} \sigma _{\zeta ^{i}}(\beta )$ is
well-defined. We will show that their sum is in fact independent
of the choice of $\beta $.

Let $f: Y\to X$ be an irregular dihedral $p$-fold cover, branched over
an oriented surface $B\subset X$, embedded in $X$ with a unique
singularity of type $\alpha $. Such a cover exists by assumption. Then
\begin{equation*}
\Xi _{p}(\alpha) = p\sigma (X) -\frac{p-1}{2}e(B) - \sigma (Y),
\end{equation*}
a formula independent of the choice of $\beta $.

A priori, however, it might be possible for another branched cover
$f': Y'\to X'$, whose branching set also has a singularity of type
$\alpha $, to produce a different value of $\Xi _{p}$. This does not
occur. By the proof of {Theorem~\ref{thm:necessary}}, any choice of
characteristic knot $\beta $ can be used to compute the defect
$\Xi _{p}(\alpha )$ to the signature of $Y$. Using the same $\beta $ and
Equation~{(\ref{valueXi})} to compute this signature defect for two
different covers, for instance $Y$ and $Y'$, shows that $\Xi _{p}(
\alpha )$ does not vary with the choice of branched cover and indeed
depends only on $\alpha $.
\end{proof}

\section{Constructing dihedral covers}%
\label{sec:sufficient}
In this section, we describe a method for constructing an irregular
$p$-fold dihedral cover of a simply-connected four-manifold
$X$. We use this construction to prove {Theorem~\ref{thm:sufficient}},
which is a partial converse to {Theorem~\ref{thm:necessary}}. 
Precisely,
{Theorem~\ref{thm:sufficient}} establishes that, when two-bridge slice
singularities are considered, all pairs of integers $(\sigma , \chi )$ afforded by the
necessary condition ({Theorem~\ref{thm:necessary}}) as the signature and
Euler characteristic of a $p$-fold irregular dihedral cover of a given
base manifold $X$ with specified branching set $B$ are indeed realized
as the signature and Euler characteristic of a $p$-fold irregular
dihedral cover over $X$.

A main ingredient of the proof is constructing an irregular dihedral
cover of $S^{4}$ branched over a singular two-sphere with a given
singularity ({Proposition~\ref{covers_of_spheres}}). By taking a connected
sum with this singular two-sphere, we can introduce a singularity to
a PL embedded surface $B\subset X$ without changing its homeomorphism type
or that of the ambient manifold. The dihedral cover of $X$ is
constructed from the irregular dihedral cover of $(S^{4}, S^{2})$,
together with several copies of the double branched cover of $X$ over
the locally flat surface $B$.

We begin with a simple lemma.

\begin{lem}
\label{lem:SC_double_cover_exists}
Let $X$ be four-manifold and let $B\subset X$ be an embedded connected
surface such that $\pi _{1}(X-B, x_{0})\cong \mathbb{Z}/2\mathbb{Z}$. The
double branched cover of $(X, B)$ is simply-connected.
\end{lem}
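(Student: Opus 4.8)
The plan is to identify the double branched cover explicitly in terms of the covering-space theory of the complement, and then reconstruct the branched cover by gluing back a neighborhood of the branching set. Let $\widetilde{X}$ denote the double cover of $(X,B)$ branched along $B$. First I would observe that since $\pi_1(X-B,x_0)\cong\mathbb{Z}/2\mathbb{Z}$, the unbranched double cover $\widetilde{X-B}$ is, up to deck transformation, \emph{the} connected double cover of $X-B$; its fundamental group is the kernel of the surjection $\pi_1(X-B,x_0)\twoheadrightarrow\mathbb{Z}/2\mathbb{Z}$, which is the commutator-like index-two subgroup. But the only index-two subgroup of $\mathbb{Z}/2\mathbb{Z}$ is the trivial group, so $\pi_1(\widetilde{X-B},\tilde{x}_0)=1$, i.e. the unbranched double cover of the complement is already simply-connected.

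Next I would recover $\widetilde{X}$ from $\widetilde{X-B}$ by filling in a neighborhood of the branching set. Writing $N(B)$ for a regular neighborhood of $B$ in $X$ and $\widetilde{N(B)}$ for its preimage in $\widetilde{X}$, we have a decomposition $\widetilde{X}=\widetilde{X-\interior{N(B)}}\cup_{\partial}\widetilde{N(B)}$, where $\widetilde{X-\interior{N(B)}}$ is the restriction of the unbranched cover (hence simply-connected, being a deformation retract of $\widetilde{X-B}$) and $\widetilde{N(B)}\to N(B)$ is the fiberwise double branched cover of the disk bundle over $B$, branched along the zero section. Since $N(B)$ deformation retracts to $B$ and the branched double cover of a $2$-disk over its center is again a $2$-disk (with the map $z\mapsto z^2$), the branched cover $\widetilde{N(B)}$ is itself a disk bundle over a surface — in fact a (possibly twisted) double cover $\widehat{B}$ of $B$ — so $\pi_1(\widetilde{N(B)})\cong\pi_1(\widehat{B})$ surjects onto $\pi_1(N(B))\cong\pi_1(B)$ with image of index one or two. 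Then van Kampen's theorem applied to this decomposition gives $\pi_1(\widetilde{X})$ as a pushout of $\pi_1(\widetilde{X-\interior{N(B)}})=1$, $\pi_1(\widetilde{N(B)})$, and $\pi_1$ of the common boundary $\partial\widetilde{N(B)}$. The crucial point is that every loop coming from $\pi_1(\widetilde{N(B)})$ is killed: a generator of $\pi_1(\widehat{B})$ maps, under inclusion into $\widetilde{X-\interior{N(B)}}$ followed by the identification with the simply-connected unbranched complement cover, to a null-homotopic loop — because the meridian-quotient relation and simple-connectivity of the complement cover force it. Hence $\pi_1(\widetilde{X})=1$.

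I expect the main obstacle to be the bookkeeping at the boundary $\partial\widetilde{N(B)}\to\partial N(B)$: one must check carefully that the loops generating $\pi_1(\widetilde{N(B)})$ do in fact become trivial in the van Kampen pushout, i.e. that no new $\pi_1$ is created by the gluing. The cleanest way around this is probably to argue directly that $\widetilde{X}-\widehat{B}$ deformation retracts onto (or is homotopy equivalent to) the unbranched cover $\widetilde{X-B}$, which is simply-connected, and that adding back the branch locus $\widehat{B}$ — a codimension-two submanifold — can only \emph{add} relations to $\pi_1$ (since meridians of $\widehat{B}$, which generate the kernel of $\pi_1(\widetilde{X}-\widehat{B})\to\pi_1(\widetilde{X})$, are killed). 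Since $\pi_1(\widetilde{X}-\widehat{B})=\pi_1(\widetilde{X-B})=1$ already, it follows immediately that $\pi_1(\widetilde{X})=1$. This reduces the entire lemma to the first observation about index-two subgroups of $\mathbb{Z}/2\mathbb{Z}$ together with the standard fact that filling in a codimension-two branch locus is $\pi_1$-surjective, so I would present it in that streamlined form.
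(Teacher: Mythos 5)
Your proposal is correct and follows essentially the same route as the paper: the unbranched double cover of the complement is the universal cover (the index-two subgroup of $\mathbb{Z}/2\mathbb{Z}$ being trivial), and then van Kampen applied to the decomposition along $\partial N(\hat{B})$ --- equivalently, the fact that filling in the codimension-two branch locus is $\pi_1$-surjective --- gives simple connectivity. One small correction: since meridians of $B$ map to the nontrivial element of $\mathbb{Z}/2\mathbb{Z}$, each point of $B$ has a single preimage, so the branch locus upstairs is homeomorphic to $B$ rather than a double cover of it; this does not affect your argument.
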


\begin{proof}
Since $\pi _{1}(X- B, x_{0})\cong \mathbb{Z}/2\mathbb{Z}$, a double cover
of $X$ branched along $B$ exists. We denote this cover by $\hat{X}$ and denote by $\hat{B}$ the (homeomorphic) pre-image of $B$ under the
covering map. We apply van Kampen's theorem to $\hat{X} = (\hat{X} -
\hat{B})\cup _{\partial N(\hat{B})} N(\hat{B})$, where $N(\hat{B})$
denotes a small tubular neighborhood of $\hat{B}$. Being the universal
cover of $(X- B)$, $(\hat{X} - \hat{B})$ is simply connected, so
$i_{\ast }: \pi _{1}(\partial N(\hat{B}), b_{0})\rightarrow \pi _{1}(
\hat{X}-\hat{B}, b_{0})$ is the zero homomorphism. In addition, $i_{\ast }: \pi
_{1}(\partial N(\hat{B}), b_{0})\rightarrow \pi _{1}(N(\hat{B}), b_{0})$
is surjective, since every element in $\pi _{1}(N(\hat{B}), b_{0})$ can
be represented by a loop which is disjoint from the 0-section and which
is therefore homotopic to a loop in $\partial N(\hat{B})$. It follows
from van Kampen's Theorem that $\hat{X}$ is simply-connected.
\end{proof}

Next, we prove a couple of lemmas concerning the singularities which we
will be introduced to the branching sets in the construction of
dihedral covers. In {Lemma~\ref{ribbon_disks}} we recall a well-known fact
about the fundamental groups of complements of ribbon disks.
{Lemma~\ref{fox-milnor}} allows us to extend a dihedral cover of a
two-bridge slice knot to a cover of a disk it bounds.

\begin{lem}
\label{ribbon_disks}
Let $K\subset S^{3} =\partial B^{4}$ be a ribbon knot and let $D'\subset S^{3}$ be a
ribbon disk for $K$. Then, there exists $D\subset B^{4}$, a slice disk
for $K$, such that the map $i_{\ast }: \pi _{1} (S^{3}-K, x_{0})
\to \pi _{1}(B^{4}-D, x_{0})$ induced by inclusion is surjective.
\end{lem}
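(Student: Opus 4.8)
The plan is to realize $D$ concretely and then read $\pi_1$ of its complement off the ribbon structure. First I would push the interior of the immersed ribbon disk $D'\subset S^3$ slightly into the interior of $B^4$, resolving each ribbon singularity; this produces an embedded slice disk $D\subset B^4$ with $\partial D = K$. By the definition of a ribbon disk, the radial Morse function on $B^4$ (with $K = D\cap \partial B^4$ at the top level) can be arranged to restrict to $D$ with critical points of index $0$ and $1$ only, i.e.\ with no local maxima; equivalently, $D$ is obtained from a collection of $m$ disjoint, unknotted, unlinked disks pushed into $B^4$ by attaching $m-1$ bands.

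Next I would compute $\pi_1(B^4-D)$ by passing to the homotopy-equivalent complement $B^4 - \mathrm{int}\,N(D)$ of a closed tubular neighborhood of $D$. The absence of local maxima on $D$ means, after dualizing the handle decomposition of the pair $(B^4, N(D))$, that $B^4 - \mathrm{int}\,N(D)$ is obtained from $(S^3 - N(K))\times[0,1]$ by attaching handles of index $\ge 2$ only. Since attaching a handle of index at least $2$ induces a surjection on $\pi_1$ under the inclusion of the original manifold, the map $\pi_1(S^3 - N(K)) = \pi_1\bigl((S^3 - N(K))\times[0,1]\bigr) \to \pi_1(B^4 - \mathrm{int}\,N(D))$ is surjective. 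The deformation retractions $S^3 - K \simeq S^3 - N(K)$ and $B^4 - D \simeq B^4 - \mathrm{int}\,N(D)$ are compatible with inclusion, so $i_*\colon \pi_1(S^3 - K, x_0)\to \pi_1(B^4 - D, x_0)$ is surjective, which is the claim.

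An equivalent and perhaps more transparent route avoids handle calculus: the ribbon structure on $D$ directly produces a presentation of $\pi_1(B^4 - D)$ whose generators are meridians of the arcs of a diagram of $K$, with one relation per band, each of the conjugation form $x_i = w x_j w^{-1}$. Every such generator lies in the image of $i_*$ and these generators generate $\pi_1(B^4-D)$, so $i_*$ is onto. In either approach, the one point requiring care is the bookkeeping that identifies the handles (respectively, the presentation) of $B^4 - \mathrm{int}\,N(D)$: one must verify that pushing in the ribbon disk and dualizing genuinely eliminates all $0$- and $1$-handles from the complement built over the knot exterior, which is the standard but slightly fiddly description of ribbon disk complements. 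Everything else is formal.
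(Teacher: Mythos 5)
Your proof is correct and takes essentially the same route as the paper: push the ribbon disk into $B^4$ so that the radial function restricted to $D$ has no local maxima, then observe that no new $\pi_1$ generators can appear in the complement. The paper phrases this last step via Fox's cross-section (motion picture) method rather than your dual handle decomposition, but these are two formulations of the same computation.
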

\begin{proof}
Since $D'$ is ribbon, we can push the interior of $D'$ into the interior
of $B^{4}$ to obtain a slice disk $D$ with the property that $g$, the
radial function on $B^{4}$, is Morse when restricted to $D$ and has no
local maxima on the interior of $D$. Computing the fundamental group of
the complement of $D$ in $B^{4}$ by cross-sections
as outlined in~\cite{fox1962quick}, we start with $\pi _{1} (\partial B^{4}-
\partial D, x_{0}) = \pi _{1}(S^{3}- K, x_{0})$ and proceed to introduce
new generators or relations at each critical point of $g$. Since $g$ has
no maxima, no new generators are introduced, implying that $i_{\ast }:
\pi _{1}(S^{3}- K, x_{0})\rightarrow \pi _{1} (B^{4}-D, x_{0})$ is a
surjection.
\end{proof}

In the notation of the above lemma, a disk $D$ with the property that
\begin{equation*}
i_{\ast }: \pi _{1} (S^{3}-K, x_{0}) \to \pi _{1}(B^{4}-D, x_{0})
\end{equation*}
is a surjection is called a \emph{homotopy ribbon disk}. Thus, the lemma can
be rephrased by saying ribbon knots admit homotopy ribbon disks.

\begin{lem}
\label{fox-milnor}
Let $K\subset S^{3}\subset \partial B^{4}$ be a slice knot and let
$D\subset B^{4}$ be a slice disk for $K$. Let $p>1$ be an odd
square-free integer. If the pair $(S^{3}, K)$ admits an irregular $p$-fold
dihedral cover, then the pair $(B^{4}, D)$ admits one as well.
Furthermore, if $K$ is a two-bridge knot, $D$ can be chosen PL and such
that the irregular dihedral cover of $B^{4}$ branched along $D$ is
simply-connected.
\end{lem}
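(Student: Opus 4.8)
The plan is to handle the two assertions separately. For the first claim (that a $p$-fold dihedral cover of $(S^3,K)$ extends over the slice disk $D$), I would reason directly in terms of the monodromy representation. By Definition~\ref{def:dihedral}, the given dihedral cover of $(S^3,K)$ corresponds to a surjection $\phi: \pi_1(S^3-K,x_0)\to D_p$ sending meridians of $K$ to reflections. To build a dihedral cover of $(B^4,D)$ branched along $D$, it suffices to produce a homomorphism $\psi:\pi_1(B^4-D,x_0)\to D_p$ sending meridians of $D$ to reflections and restricting to $\phi$ under the inclusion-induced map $i_*:\pi_1(S^3-K,x_0)\to\pi_1(B^4-D,x_0)$. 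The key input is that when $K$ is slice, the complement $B^4-D$ has the homology of a circle, and in fact the inclusion induces an isomorphism $H_1(S^3-K;\mathbb{Z})\xrightarrow{\ \cong\ } H_1(B^4-D;\mathbb{Z})\cong\mathbb{Z}$. However, $\phi$ is generally not abelian, so I cannot simply factor through homology. Instead I would invoke the standard fact (via the half-lives-half-dies argument, or directly from the Fox--Milnor condition / the structure of slice-disk complements) that the kernel of $i_*$ is normally generated by commutators lying in the second derived subgroup of the knot group, together with the observation that the relevant quotient controlling dihedral representations — namely $\pi_1/\pi_1''$, whose structure governs metacyclic representations by Fox~\cite{fox1970metacyclic} — is detected by the homology of the $p$-fold cyclic branched cover $\Sigma$. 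Since $K$ is slice, $\Sigma$ bounds a rational homology ball (the $p$-fold cyclic branched cover $Q$ of $B^4$ over the pushed-in slice surface, as in \cite{CS1975branched}), and "half" of $H_1(\Sigma)$ dies; one then checks that the characteristic-knot / metacyclic data survives this, so $\phi$ descends compatibly to $\pi_1(B^4-D,x_0)$.

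Concretely, I expect the cleanest route is: take a Seifert surface $V$ for $K$ and a mod~$p$ characteristic knot $\beta\subset V^\circ$ realizing $\phi$ (which exists by \cite{CS1984linking} since $(S^3,K)$ admits the dihedral cover); then reuse the Cappell--Shaneson cobordism $W(K,\beta)$ together with the $p$-fold cyclic cover $Q$ of $(B^4,V')$ over a pushed-in Seifert surface for $\beta$, glued along $\Sigma$ exactly as in the proof of Theorem~\ref{thm:necessary} (the construction of $Z=W(\alpha,\beta)\cup_\Sigma Q$). This produces a $p$-fold branched cover of $D^4$ whose branching set is the PL complex $T$ of Equation~\eqref{define_T}; when $K=\alpha$ is slice one arranges $V'$ to cap things off so that $T$ becomes a slice \emph{disk} rather than a surface with the extra ``$\top$''-singularity — this is where sliceness is used. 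I would need to verify that the resulting cover of $(B^4,D)$ restricts on the boundary to the originally given dihedral cover of $(S^3,K)$, which follows from the boundary behavior of $h_1$ and $h_2$ recorded in Step~1 of the proof of Theorem~\ref{thm:necessary}.

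For the refinement when $K$ is two-bridge: first, every two-bridge knot has a genus-one Seifert surface, hence $V$ can be taken to be a once-punctured torus, making $\beta$ and the construction above completely explicit and PL; this also shows the dihedral cover of $(S^3,K)$ is $S^3$ (one computes that the double, hence the dihedral, branched cover of a two-bridge knot is a lens space quotient giving $S^3$ in the relevant normalization — this is the assertion referenced parenthetically in the Remark after Theorem~\ref{thm:necessary}). Then I would invoke Lemma~\ref{ribbon_disks}: two-bridge slice knots are ribbon (by Lisca's theorem, or more classically for the relevant families), so $K$ bounds a homotopy ribbon disk $D$ with $i_*:\pi_1(S^3-K,x_0)\to\pi_1(B^4-D,x_0)$ \emph{surjective}. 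Surjectivity of $i_*$ forces $\pi_1(B^4-D,x_0)$ to be a quotient of the knot group, so the dihedral representation $\phi$ automatically induces one on $\pi_1(B^4-D,x_0)$ (solving the extension problem from the first part for free), and the corresponding branched cover $\tilde B^4$ is simply-connected by a van Kampen argument identical in spirit to Lemma~\ref{lem:SC_double_cover_exists}: $\tilde B^4 = (\tilde B^4 - \tilde D)\cup_{\partial N(\tilde D)} N(\tilde D)$, where $\pi_1(\tilde B^4-\tilde D)$ is the appropriate finite-index subgroup of $\pi_1(B^4-D)$ and the meridional loops generating $\pi_1(N(\tilde D))$ die in the cover. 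The main obstacle is the first part — extending the (non-abelian) monodromy $\phi$ over the slice-disk complement — since sliceness only directly gives homological control; the two-bridge case sidesteps this entirely via the homotopy-ribbon property, which is why it yields the stronger PL and simple-connectivity conclusions.
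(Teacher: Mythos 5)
There are two genuine gaps here, one in each half of your argument. For the first claim, the paper's mechanism is entirely different from both of your proposed routes, and the label of the lemma gives it away: one uses the Fox--Milnor factorization $\Delta_{K}(-1)=\pm(\Delta_{D}(-1))^{2}$. Since $K$ admits a dihedral cover, $|\Delta_{K}(-1)|=|H_{1}(\hat{S};\mathbb{Z})|\equiv 0 \bmod p$; since $p$ is square-free, this forces $\Delta_{D}(-1)\equiv 0\bmod p$, hence $H_{1}(\hat{B};\mathbb{Z})$ (the double branched cover of $(B^{4},D)$) surjects onto $\mathbb{Z}/p\mathbb{Z}$, and one builds the irregular dihedral cover of $(B^{4},D)$ as a $\mathbb{Z}/2\mathbb{Z}$ quotient of the $p$-fold cyclic cover of $\hat{B}$. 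Your proposal never uses the square-free hypothesis, which is exactly what makes this step work, and neither of your substitutes closes the gap: the ``metacyclic data survives half-lives-half-dies'' route is left unverified at precisely the point where the content lies, while the $Z=W(\alpha,\beta)\cup_{\Sigma}Q$ route produces a branched cover of $D^{4}$ whose branching set is the singular complex $T$ of Equation~(\ref{define_T}) --- not the given slice disk $D$ --- and there is no evident way to ``arrange $V'$ to cap things off'' so that $T$ becomes $D$.

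For the two-bridge refinement, your claim that surjectivity of $i_{\ast}:\pi_{1}(S^{3}-K,x_{0})\to\pi_{1}(B^{4}-D,x_{0})$ makes the dihedral representation $\phi$ descend ``automatically'' is backwards: if $i_{\ast}$ is surjective then $\pi_{1}(B^{4}-D,x_{0})$ is a quotient of the knot group, so representations of the disk complement pull back to the knot group, not the other way around; $\phi$ descends only if $\ker i_{\ast}\subset\ker\phi$, which surjectivity does not give. The paper sidesteps this by obtaining the cover of $(B^{4},D)$ from the Alexander-polynomial argument above and then using the homotopy-ribbon property only for the simple-connectivity statement (surjectivity of $\pi_{1}$ of the unbranched covers, plus the fact that the boundary cover is $S^{3}$) --- that last step of yours is essentially the paper's. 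Two smaller errors: two-bridge knots do not in general bound genus-one Seifert surfaces (the surfaces in Appendix~A have genus $3$), and the paper's proof that the irregular dihedral cover of a two-bridge knot is $S^{3}$ is the bridge-sphere/Heegaard-genus-zero argument rather than a statement about lens space quotients.
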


\begin{proof}
Let $\Delta _{K}(t)$ denote the Alexander polynomial of $K$ and
$\Delta _{D}(t)$ that of $D$. Denote by $\hat{S}$ the double branched
cover of the pair $(S^{3}, K)$ and by $\hat{K}$ the pre-image of $K$
under the covering map. It is well known that $|\Delta _{K}(-1)| = |(H
_{1} (\hat{S}; \mathbb{Z})|$ \cite{seifert1935geschlecht}. Similarly,
denote by $\hat{B}$ the double cover of $B^{4}$ branched along $D$ and
by $\hat{D}$ the pre-image of $D$. As above, we have $|\Delta _{D}(-1)|=
|(H_{1} (\hat{B}; \mathbb{Z})|$, since $\pm \Delta _{D}(-1)$ is the
determinant of a presentation matrix for the first homology of the
double branched cover of $D$. (Denote by $B_{\infty }$ the infinite
cyclic cover of the disk complement $B^{4}-D$. Regard $H_{1}(B_{
\infty }; \mathbb{Z})$ as a $\mathbb{Z}[\tau , \tau ^{-1}]$-module, where
the action of $\tau $ is that induced by a generator of the group of
covering translations. Then $\Delta _{D}(t)$ is the characteristic
polynomial of this action and $H_{1}(\hat{B}; \mathbb{Z})\cong Coker
\{1-\tau ^{2}: H_{1}(B_{\infty }; \mathbb{Z})\to H_{1}(B_{\infty };
\mathbb{Z})\}$. For a thorough exposition on the homology of cyclic
covers of a homology $S^{1}$, see~\cite{stevens1996homology}.)

Since $K$ admits a dihedral cover, $H_{1} (\hat{S}; \mathbb{Z})$
surjects onto $\mathbb{Z}/p\mathbb{Z}$~\cite{CS1984linking}. It
follows that $\Delta _{K}(-1)\equiv 0 \mod p$. Since $D$ is a slice disk
for $K$, by results of Fox and Milnor \cite{fox1966singularities} we
have $\Delta _{K}(-1) = \pm (\Delta _{D}(-1))^{2}$, so $(\Delta _{D}(-1))^{2}
\equiv 0 \mod p$. Since $p$ is square-free by assumption, we conclude that
$\Delta _{D}(-1) \equiv 0 \mod p$ as well. Then $H_{1} (\hat{B};
\mathbb{Z})$ surjects onto $\mathbb{Z}/p\mathbb{Z}$, and thus
$\hat{B}$ admits a $p$-fold cyclic cover $T$ with $\partial T = : N$.
This cover $T$ is the {regular} dihedral $2p$-fold branched cover
of $(B^{4}, D)$. Let $Z$ be the quotient of $T$ by the action of any
$\mathbb{Z}/2\mathbb{Z}$ subgroup of $D_{p}$. Then $Z$ is the desired
irregular dihedral $p$-fold cover of $(B^{4}, D)$. Its boundary, which
we denote by $U$, is the irregular dihedral $p$-fold cover of $K$.

Now assume in addition that $K$ is a two-bridge knot. In this case it
is well-known that $U$ is in fact $S^{3}$. Indeed, the pre-image
$S^{\ast }$ of a bridge sphere for $K$ is a dihedral cover of~$S^{2}$ branched over four points, so $S^{\ast }$ has Euler
characteristic
\begin{equation*}
\chi (S^{\ast }) = p (\chi (S^{2}) - 4) + 4\frac{p+1}{2} = 2.
\end{equation*}
A bridge sphere bounds a trivial tangle to either side, and the cover
of a trivial tangle is a handlebody. Therefore, $S^{\ast }$ is a
Heegaard surface for $U$, and, since the genus of $S^{\ast }$ is zero,
$U\cong S^{3}$.

Since $K$ is two-bridge slice, it is ribbon. Hence, by
{Lemma~\ref{ribbon_disks}}, the slice disk $D$ for $K$ can be chosen to
be PL and homotopy ribbon, i.e. such that $\pi _{1} (S^{3}-K, x_{0})\xrightarrow{i
_{\ast }} \pi _{1}(B^{4}-D, x_{0})$ is a surjection. Therefore, given a
homomorphism $\psi : \pi _{1} (B^{4}-D, x_{0}) \rightarrow D_{2p}$, the
pre-image $(\psi \circ i_{\ast })^{-1}(\mathbb{Z}/2\mathbb{Z})$ surjects
onto $\psi ^{-1}(\mathbb{Z}/2\mathbb{Z})$ by $i_{\ast }$. This implies
that the inclusion of the unbranched cover associated to $U$ into the
unbranched cover associated to $Z$ induces a surjection on fundamental
groups. Since the branching set of $U$ is a subset of the branching set
of $Z$, it follows that $\pi _{1}(U, x_{0})\xrightarrow{i_{\ast }}
\pi _{1}(Z, x_{0})$ is also a surjection. But $\pi _{1}(U, x_{0}) = 0$,
and we conclude that the irregular dihedral cover of the pair
$(B^{4}, D)$ is simply-connected.
\end{proof}

\begin{prop}
\label{covers_of_spheres}
Let $p>1$ be an odd square-free integer and let $K\subset S^{3}$ be a
slice knot such that the pair $(S^{3}, K)$ admits an irregular dihedral $p$-fold
 cover. Then there exists an embedded two-sphere $S^{2}\subset
S^{4}$ such that the pair $(S^{4}, S^{2})$ admits an irregular dihedral $p$-fold
 cover $W$ and $S^{2}\subset S^{4}$ is locally flat except at
one point where it has a singularity of type $K$. Moreover, if $K$ a
two-bridge knot, $W$ is a simply-connected manifold.
\end{prop}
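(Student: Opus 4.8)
The plan is to build the branched cover of $(S^4, S^2)$ by cutting and pasting a standard cover of $(S^4, S^2_{\mathrm{unknot}})$ along a ball in which the interesting singularity lives. First I would take the unknotted two-sphere $S^2_0 \subset S^4$ and note that $\pi_1(S^4 - S^2_0) \cong \mathbb{Z}$; this does \emph{not} surject onto $D_p$, so instead I start from the double branched cover. The key idea is to realize $S^4$ as the double of $B^4$ along $S^3$, with $S^2_0$ the double of the trivial disk $D_0 \subset B^4$. I then replace one copy of the trivial pair $(B^4, D_0)$ by the pair $(B^4, D)$, where $D$ is the PL homotopy ribbon slice disk for $K$ furnished by {Lemma~\ref{fox-milnor}} (using that $K$ is slice, that $(S^3,K)$ admits a dihedral cover, and that $p$ is odd square-free). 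Since $\partial(B^4, D) = (S^3, K)$ on both sides, gluing the two copies of $B^4$ along their boundary three-spheres (via a homeomorphism carrying $K$ to $K$) yields a pair homeomorphic to $(S^4, S^2)$, where the new $S^2$ is the PL doubled disk $D \cup_K D$ — locally flat everywhere except at the single "equatorial" point where the double has a cone singularity on $K$, i.e.\ exactly a singularity of type $K$ in the sense of {Definition~\ref{def:sing}}.

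Next I would produce the dihedral cover. By {Lemma~\ref{fox-milnor}}, the pair $(B^4, D)$ admits an irregular dihedral $p$-fold cover $Z$, restricting over the boundary to the irregular dihedral $p$-fold cover $U$ of $(S^3, K)$. On the other side I take $Z_0$, the irregular dihedral $p$-fold cover of $(B^4, D_0)$ — this is just a standard cover of the trivial disk, and it also restricts over the boundary to the dihedral $p$-fold cover of $(S^3, K)$ (the dihedral representation is determined by where meridians go, which is dictated by $\phi$; since $K$ is two-bridge or at least admits the relevant representation, the restrictions to the boundary agree up to the appropriate equivalence, and we may choose the gluing of the two $B^4$'s to intertwine them). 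Gluing $Z$ to $Z_0$ along their common boundary cover of $(S^3, K)$ gives a four-manifold $W$ together with a branched covering map $W \to S^4$ whose branching set is exactly the singular $S^2$ constructed above; this is an irregular dihedral $p$-fold cover by construction, since the monodromy is assembled from the dihedral monodromies on the two pieces and meridians of the branching set still map to reflections.

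For the final "moreover": when $K$ is two-bridge, {Lemma~\ref{fox-milnor}} tells us that $U \cong S^3$ and that the cover $Z$ of $(B^4, D)$ is simply-connected; and the cover $Z_0$ of the trivial pair $(B^4, D_0)$ is easily seen to be simply-connected as well (it is built from a handlebody-type picture, or one applies the same van Kampen argument). Since $W = Z \cup_{S^3} Z_0$ is glued along the simply-connected three-manifold $U \cong S^3$, van Kampen's theorem immediately gives $\pi_1(W) = 0$, so $W$ is a simply-connected manifold, as claimed.

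The main obstacle I expect is the \emph{matching of the boundary covers}: one must check that the irregular dihedral $p$-fold cover of $(S^3, K)$ arising as $\partial Z$ and the one arising as $\partial Z_0$ are genuinely identifiable by an orientation-reversing homeomorphism of $S^3$ carrying $K$ to $K$ and intertwining the two dihedral monodromy representations, so that the covers glue to a manifold (no non-manifold points appear along the gluing locus) and the glued-up monodromy still sends meridians to reflections. This is where the fact that the dihedral representation of $\pi_1(S^3 - K)$ is determined (up to equivalence, especially for two-bridge $K$) by the pair $(S^3,K)$, together with the surjectivity statement in {Lemma~\ref{ribbon_disks}}, does the real work; everything else is a routine assembly of covering spaces and a van Kampen computation.
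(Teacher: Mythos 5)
Your construction of the singular two-sphere does not work, and this is a genuine gap rather than a presentational issue. If you double the pair $(B^{4}, D)$ along $(S^{3}, K)$ --- which is what ``the PL doubled disk $D\cup_{K} D$'' describes --- the resulting sphere is PL locally flat \emph{everywhere}: near each point of the equatorial knot $K$ the local picture is the double of a locally flat properly embedded disk in a half-space, which is the standard pair $(\mathbb{R}^{4}, \mathbb{R}^{2})$. There is no ``single equatorial point''; the equator is the whole circle $K$, and no point of $D\cup_{K} D$ has link of pairs equal to $(S^{3},K)$, so no singularity of type $K$ in the sense of Definition~\ref{def:sing} is produced. The alternative reading of your first paragraph --- gluing $(B^{4}, D)$ to the trivial pair $(B^{4}, D_{0})$ --- fails even earlier, since $\partial(B^{4},D_{0})=(S^{3},\mathrm{unknot})$ and there is no homeomorphism of boundary pairs carrying $K$ to the unknot. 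The same confusion propagates to the covering space: the irregular dihedral $p$-fold cover of $(B^{4},D_{0})$ restricts over the boundary to the (disconnected) dihedral cover of the unknot, namely $\frac{p+1}{2}$ copies of $S^{3}$, not to the dihedral cover of $(S^{3},K)$, so your $Z_{0}$ cannot be glued to $Z$ along a common boundary cover.

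The missing idea is that a cone singularity must be produced by an actual cone. The paper takes the second piece to be the \emph{cone} on the pair $(S^{3},K)$, that is, $(B^{4}_{2}, D^{2}_{2}) = (cS^{3}, cK)$; the cone point is the unique non-locally-flat point and its link is $(S^{3},K)$ by construction. Gluing this cone to $(B^{4}_{1}, D^{2}_{1})$, with $D^{2}_{1}$ a PL slice disk (chosen homotopy ribbon via Lemma~\ref{fox-milnor} when needed), gives the desired singular sphere. The cover over the cone is then the cone on the irregular dihedral cover $M$ of $(S^{3},K)$ --- not a cover of a trivial disk pair --- and it glues to the cover $W$ of $(B^{4}_{1},D^{2}_{1})$ along $M$ automatically, since both restrict to the same cover of $(S^{3},K)$. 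Your final van Kampen argument is salvageable once this is fixed: for $K$ two-bridge, $M\cong S^{3}$, so the cone on $M$ is a ball, the total space is a manifold, and simple connectivity follows from the simple connectivity of the cover of $(B^{4}_{1},D^{2}_{1})$ given by Lemma~\ref{fox-milnor}, together with the fact that attaching a cone on a boundary $S^{3}$ does not change $\pi_{1}$.
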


\begin{proof}
Let $D_{1}^{2}\subset B^{4}_{1}$ be a PL slice disk for $K$. Denote the
cone on the pair $(S^{3}, K)$ by $(B_{2}^{4}, D_{2}^{2})$. The disk $D_{2}^{2}$ is a PL submanifold of $B^{4}_{2}$
except at the cone point $x$, where by construction $D_{2}^{2}$ has a
singularity of type $K$. Identifying the two pairs $(B_{1}^{4}, D_{1}
^{2})$ and $(B_{2}^{4}, D_{2}^{2})$ via the identity map along the two
copies of $(S^{3}, K)$ lying on their boundaries, we obtain an embedding
of a two-sphere $S: = D_{1}^{2} \cup _{K} D_{2}^{2}$ in $S^{4} = B_{1}
^{4} \cup _{S^{3}} B_{2}^{4}$ such that $S$ has a unique
singularity of type $K$ at $x$.

By {Lemma~\ref{fox-milnor}}, the pair $(B^{4}_{1}, D_{1}^{2})$ admits an
irregular dihedral $p$-fold cover $W$, and its boundary $M$ is the
irregular dihedral $p$-fold cover of the pair $(S^{3}, K)$. Since
$(B^{4}_{2}, D_{2}^{2})$ is a cone, its irregular dihedral $p$-fold
cover is simply the cone on $M$. Thus, the pair $(S^{4}, S)$ admits a
cover
\begin{equation*}
Z:=W\bigcup _{\partial W\sim M\times \{0\}} (M\times [0, 1]/ M\times
\{1\})
\end{equation*}
as claimed. If, in addition, $K$ is a two-bridge knot, by
{Lemma~\ref{fox-milnor}} we know that $M$ is the three-sphere and 
that we can pick the disk $D_{1}^{2}$ in the above construction to be homotopy ribbon so that $W$ is
simply-connected. Thus, for $K$ two-bridge, $Z$ is a simply-connected
manifold.
\end{proof}

\begin{proof}[Proof of {Theorem~\ref{thm:sufficient}}]
The proof is as follows: first, we modify the branching set $B$ by
introducing a singularity of type $\alpha $ to the embedding of
$B$ in $X$; next, we construct the desired covering space $Y$ by gluing
together several manifolds by homeomorphisms on their boundaries; we
check that $Y$ is indeed a $p$-fold irregular dihedral cover of $X$ with
the specified branching set; lastly, we verify that $Y$ is a
simply-connected manifold.

We begin by modifying the branching set as outlined above. Let
$S^{2}\subset S^{4}$ be an embedded two-sphere with a unique singularity
of type $\alpha $, constructed as in the proof of {Proposition~\ref{covers_of_spheres}}.
Let $y\in S^{2} \subset S^{4}$ be any locally flat point with
$N(y)$ a neighborhood of $y$ not containing the singular point $x$. We
use $N(y)$ to form the connected sum of pairs $(X, B)\# (S^{4}, S^{2})
= : (X, B_{1})$. By construction, $B_{1}$ is homeomorphic to $B$, is
embedded in $X$ with a unique singularity of type $\alpha $ and
satisfies $e(B_{1})=e(B)$. Furthermore, we see from the natural
Mayer--Vietoris sequence that $H_{1} (X- B; \mathbb{Z})\cong H_{1} (X-
B_{1}; \mathbb{Z})$ and the latter group is $\mathbb{Z}/2\mathbb{Z}$ by
assumption. Hence, $X$ admits a double cover $f: \hat{X}\rightarrow X$
branched along $B_{1}$.

A useful way to visualize this cover is the following. Since
$y\in S^{4}$ is a locally flat point, $B\cap \partial N(y)$ is the
unknot. Now viewing $\partial N(y)$ as embedded in $B_{1}$, we note that
the restriction of $f$ to $f^{-1}(\partial N(y))$ is a double branched
cover of the trivial knot, whose total space is again $S^{3}$.
Furthermore, the pre-images under $f$ of the connected summands
$(X, B) - B^{4}$ and $(S^{4}, S^{2})-N(y)$ are the double branched
covers of those summands. We can thus think of the double branched cover
$\hat{X}$ of the pair $(X, B_{1})$ as the union (along $S^{3}$ viewed
as a double cover of the unknot) of the double branched covers of a
punctured $(X, B)$ and $(S^{4}, S^{2})-N(y)$. For future use, we denote
the restriction of $f$ to the pre-image $\hat{X}_{0}$ of $X- N(x)$ by
$f_{0}$,
\begin{equation*}
f_{0}:\hat{X}_{0}\rightarrow (X- N(x)).
\end{equation*}

Next, consider the irregular dihedral $p$-fold cover $g: Z\rightarrow
S^{4}$ of $(S^{4}, S^{2})$ constructed as in
{Proposition~\ref{covers_of_spheres}}. For $y$ as above, the restriction
of $g$ to $g^{-1}(\partial N(y))$ is the irregular dihedral $p$-fold
cover of the unknot, which consists of the disjoint union of
$\frac{p+1}{2}$ copies of $S^{3}$, $\frac{p-1}{2}$ of which are double
covers and one a single cover. Furthermore, $g^{-1}(S^{4}-N(y))$ is the
irregular dihedral $p$-fold cover of the pair $(B^{4}, D^{2})$, where
the two-disk is singular. The boundary of this dihedral cover consists
of ${\frac{p+1}{2}}$ copies of $S^{3}$. Of those, $\frac{p-1}{2}$
double-cover the complement of the unknot and one is mapped
homeomorphically by $g$.

We now describe the manifold $Y$ which we will show is homeomorphic to
a dihedral cover of $X$ along $B_{1}$. We attach to $g^{-1}(S^{4}-N(y))$
a copy of $\hat{X}_{0}$ along each boundary component $S^{3}$ which
double-covers the complement of the unknot. Naturally, the attachment
identifies the boundary components by a homeomorphism of pairs
$(S^{3}, S^{1})$, where the second component is the (unknotted)
branching set. In the same manner, we also attach a punctured copy of
$X$ along the boundary component $S^{3}$ which is a cover of index
$1$. The map
\begin{equation*}
h:= g\cup _{\frac{p-1}{2}}f_{0}\cup 1_{X-N(\ast )} : Y\rightarrow X
\end{equation*}
is a branched cover of $(X, B_{1})$. By construction, $h$ satisfies the
property that for all points $z\in B- x$, if $N(z)$ is a small
neighborhood of $z$ in $X$ not containing $x$, then $h^{-1}(N(z))$ has
$\frac{p-1}{2}$ components of index $2$ and one component of index
$1$. So $Y$ is the desired dihedral cover. By
{Theorem~\ref{thm:necessary}}, the Euler characteristic and signature of
$Y$ are those determined by the prescribed triple $X, B, \alpha $.
(Here, we use the fact that the above construction does not change the
homeomorphism type or self-intersection number of~$B$.)

Finally, we observe that $Y$ consists of simply-connected manifolds
joined together via homeomorphisms on their boundaries. Indeed, $X$ is
simply-connected by assumption, and $\hat{X}$ is simply-connected by
{Proposition~\ref{lem:SC_double_cover_exists}}. The irregular dihedral
cover $Z$ of $S^{4}$ is simply-connected by
{Proposition~\ref{covers_of_spheres}}, and, therefore, so is $g^{-1}(S
^{4}-N(y))$. We conclude that $Y$ is simply-connected, which completes
the proof.
\end{proof}

\begin{rem}
One can obtain analogous results by varying the hypotheses on the branching set $B$.
For instance, if we do not require that our construction result in a
simply-connected cover, we can relax the condition that $\pi _{1}(X-B,
x_{0})\cong \mathbb{Z}/2\mathbb{Z}$ and use for our branching set any
surface $B$ which represents an even class in $H_{2}(X; \mathbb{Z})$.
This allows us to produce, by introducing any two-bridge slice
knot as the singularity and by varying the genus of $B$ (see
{Lemma~\ref{genus_increase}}), infinitely many dihedral branched covers
of $S^{4}$, which are easily distinguished by their Euler
characteristic. Furthermore, if $B$ is the boundary union of the cone on a two-bridge knot $\alpha$ and a homotopy ribbon surface for $\alpha$, one can construct simply-connected covers of $S^4$ by this method, as done in~\cite{cahn2017singular}.  
\end{rem}

We can also use the techniques of {Theorem~\ref{thm:sufficient}} to
construct over a given four-manifold $X$ an infinite family of dihedral
covers with the same singularity type on the branching set
({Theorem~\ref{thm:odd_indefinite}}). The first step is to establish the
following Lemma.

\begin{lem}
\label{genus_increase}
Let $B\subset X^{4}$ be an oriented surface of genus $g$, PL embedded in
$X$ and such that $\pi _{1}(X- B, x_{0})\cong \mathbb{Z}/2\mathbb{Z}$. Then,
there exists a PL embedded oriented surface $C$ of genus $g+1$ in
$X$ such that $\pi _{1}(X- C, x_{0})\cong \mathbb{Z}/2\mathbb{Z}$, and
such that $e(B) = e(C)$.
\end{lem}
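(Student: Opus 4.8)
The plan is to produce $C$ by taking an interior connected sum of $B$ with a standard unknotted torus in a small ball, so that the ambient manifold $X$ is unchanged and only the genus of the surface goes up by one. First I would pick a PL point $b\in B$ and a small ball $D^4\subset X$ meeting $B$ in a trivial disk $(D^4, D^2)$. Inside a \emph{second} small ball $E^4\subset D^4$ disjoint from the first, place a standard embedded torus $T\subset S^4$ (for instance the spun trivial knot, or the boundary of a tubular neighborhood of a standard embedded circle), which is well known to have $\pi_1(S^4 - T)\cong \mathbb{Z}$. Then form the connected sum of pairs $(X, B) = (X,B)\#(S^4, T) =: (X, C)$, using the locally flat point $b$. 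Since we are connect-summing with the \emph{sphere} $(S^4, T)$, the ambient manifold stays homeomorphic (indeed PL homeomorphic) to $X$, while $C$ is the connected sum of surfaces $B \# T$, hence orientable of genus $g+1$, and $C$ remains PL embedded because $T$ is PL embedded in $S^4$ and $b$ is a PL point of $B$.

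The next step is to check the two numerical/topological conclusions. For the self-intersection: connected sum of surfaces along a locally flat point does not change the homology class in a way that affects $e$, since $T$ bounds on either side of the connect-sum sphere and contributes nothing; more precisely $[C]=[B]$ under the identification $H_2(X;\mathbb{Z})\cong H_2(X\#S^4;\mathbb{Z})$, so $e(C)=e(B)$. This is exactly the same bookkeeping used in the proof of {Theorem~\ref{thm:sufficient}} for $(X,B_1)$, and I would simply cite that computation. For the fundamental group of the complement, I would run a Mayer--Vietoris / van Kampen argument on $X - C = (X - B - D^2)\cup_{\,S^3 - (\text{unknot})}(E^4 - T)$, gluing along the complement of the trivial knot in $S^3$, whose group is $\mathbb{Z}$ generated by a meridian of $C$. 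Since $\pi_1(X - B, x_0)\cong\mathbb{Z}/2\mathbb{Z}$ is generated by a meridian of $B$, and since the meridian of $C$ near $b$ is identified with a meridian of $B$, van Kampen's theorem gives $\pi_1(X - C, x_0)$ as the pushout of $\pi_1(X - B, x_0)\cong\mathbb{Z}/2\mathbb{Z}$ and $\pi_1(S^4 - T)\cong\mathbb{Z}$ amalgamated over $\pi_1(S^3 - \text{unknot})\cong\mathbb{Z}$, where the generator of the amalgamating $\mathbb{Z}$ maps to the common meridian on both sides. That pushout is $\mathbb{Z}/2\mathbb{Z}$, since the $\mathbb{Z}$ coming from $\pi_1(S^4 - T)$ is generated by the meridian of $T$ (so it dies modulo the relation identifying it with the order-two meridian of $B$), giving $\pi_1(X - C, x_0)\cong\mathbb{Z}/2\mathbb{Z}$ as required.

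The main obstacle I anticipate is verifying cleanly that $\pi_1(S^4 - T)$ is generated by a meridian of $T$, so that the amalgamation genuinely collapses the extra $\mathbb{Z}$ down and leaves only $\mathbb{Z}/2\mathbb{Z}$; with the ``wrong'' choice of standard torus one could in principle pick up extra generators. This is why I would be careful to choose $T$ to be an \emph{unknotted} torus in $S^4$ — concretely $T=\partial\nu(\gamma)$ for an unknotted circle $\gamma\subset S^4$, or a standard spun unknot — for which $\pi_1(S^4 - T)\cong\mathbb{Z}$ is classical and is normally generated (indeed generated) by a single meridian. Everything else — the PL-ness, orientability, the genus count, and $e(C)=e(B)$ — is routine and parallels the connected-sum argument already carried out in the proof of {Theorem~\ref{thm:sufficient}}.
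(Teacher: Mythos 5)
Your proposal is correct and follows essentially the same route as the paper: connected sum of pairs with the standard unknotted torus $T\subset S^4$, van Kampen identifying the meridian of $T$ with the order-two meridian of $B$ to keep $\pi_1\cong\mathbb{Z}/2\mathbb{Z}$, and $[T]=0$ to preserve $e$. One small caution: your parenthetical descriptions of $T$ are off (the boundary of a tubular neighborhood of a circle in $S^4$ is $S^1\times S^2$, and a spun unknot is a $2$-sphere); the intended object is the standard torus lying in an equatorial $S^3\subset S^4$, whose complement indeed has infinite cyclic fundamental group generated by a meridian.
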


\begin{proof}
Let $T\subset S^{4}$ be the standard embedding of the two-torus in the
four-sphere. We have $\pi _{1} (S^{4}- T, x_{0}) \cong \mathbb{Z}$,
generated by any meridian of $T$ in $S^{4}$.

Now consider the connected sum of pairs $(X, B) \# (S^{4}, T)$ and let
$C= B \#T \subset X\# S^{4} \cong X$. Note that the genus of $C$ is
one higher than that of $B$. Since a meridian $m_{1}$ of
$T$ in $S^{4}$ becomes identified under the connected sum with a
meridian $m_{2}$ of $B$ in $X$, it follows that the fundamental group
of $ (X- C)$ is isomorphic to $\langle m_{1}, m_{2} | m_{1}=m_{2}, m
_{2}^{2} = 0 \rangle \cong \mathbb{Z}/2\mathbb{Z}$.

Finally, under the isomorphism of pairs $(X, B) \# (S^{4}, T) \cong (X,
C)$, the class $[C]\in H_{2}(X; \mathbb{Z})$ corresponds to the class
$[B\#T] \in H_{2}(X\# S^{4}; \mathbb{Z})$. Since $[T] = 0\in H_{2}(X
\# S^{4}; \mathbb{Z})$, indeed $e(B) = e(C)$. 
\end{proof}

We have now done most of the work needed to obtain an infinite family
of covers over a given base.

\begin{proof}[Proof of {Theorem~\ref{thm:construction-slice}}]
The first step of our construction is to find a closed surface
\mbox{$B\subset X$}, PL embedded in $X$ and such that $\pi _{1} (X-B; x_{0})
\cong \mathbb{Z}/2\mathbb{Z}$. Since $X$ is simply-connected and its
second Betti number is positive, such a surface exists, as we now show.
Let $F$ be a closed oriented surface, smoothly embedded in $X$ and such
that the maximum divisibility of $[F]$ in $H_{2}(X; \mathbb{Z})$ is 2.
Then $H_{1}(X-F; \mathbb{Z})\cong \mathbb{Z}/2\mathbb{Z}$. By classical
techniques, $F$ can be modified to produce a new surface $F'$, carrying
the same homology class as $F$, the fundamental group of whose
complement is abelian, as follows. Since $X$ is simply-connected,
$\pi _{1}(X-F)$ is normally generated by a meridian $\mu $ of $F$. For
any $g\in \pi _{1}(X-B, x_{0})$, the commutator $[\mu , g\mu g^{-1}]$ can
be killed by performing a finger move on the surface $F$, as shown in
Lemma~1 of~\cite{casson1986three}. After iterating this move finitely
many times, the result is a self-transverse immersed surface $F'$,
the fundamental group of whose complement is generated by $\mu $.
Finally, self-intersections of $F'$ can be removed by replacing, in a
small neighborhood of any double point, the cone on the Hopf link by an
annulus. This operation has no effect on the fundamental group of the
complement and produces the desired surface $B$.

The next step is to further modify the surface to introduce a
singularity of the desired type. Following the procedure in the proof
of {Theorem~\ref{thm:sufficient}}, we use a two-sphere $S\subset S
^{4}$, PL embedded in $S^4$ except for one singularity of type $\alpha $; next,
we construct a $p$-fold irregular dihedral cover of the pair
$(X, B)\#(S^{4}, S) \cong (X, B)$, as in the proof of
{Theorem~\ref{thm:sufficient}}.

Fixing a knot $\alpha $ as the singularity type, by
{Lemma~\ref{genus_increase}}, we can increase the genus of the branching
set $B$ to obtain an infinite family of such covers. These covers are pairwise non-homeomorphic and can be
distinguished by their Euler characteristics. Using knots for which the values of
$\Xi _{p}$ differ, it is possible to obtain covers distinguished by their
signatures as well.
\end{proof}

As an immediate consequence of our construction, we have the following.

\begin{cor}
Let $(\sigma , \chi )$ be a pair of integers which satisfy
Equations~{(\ref{eq:chi})} and~{(\ref{eq:sigma})} for some given $X$, $B$,
$\alpha $ and $p$, where $p$ is an odd prime and $\alpha $ a two-bridge
slice knot. Then, if $\chi ' = \chi + (p-1)k$ for a natural number
$k$, there exists a manifold $Y'$ which is homeomorphic to a $p$-fold
irregular dihedral cover of $X$ and satisfies $\sigma (Y') =\sigma $,
$\chi (Y')=\chi '$. Moreover, if $\pi _{1}(X-B, x_{0})\cong \mathbb{Z}/2
\mathbb{Z}$, $Y$ is simply-connected.
\end{cor}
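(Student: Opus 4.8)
The plan is to derive the corollary directly from Lemma~\ref{genus_increase} together with the construction carried out in the proof of Theorem~\ref{thm:sufficient}; the only new observation is that raising the genus of the branching set shifts the Euler characteristic of the cover by multiples of $p-1$ while leaving its signature untouched. First I would record the arithmetic. Suppose $C\subset X$ is a PL-embedded oriented surface with $\chi(C)=\chi(B)-2k$ and $e(C)=e(B)$. Substituting $C$ for $B$ in Equation~{(\ref{eq:chi})} replaces the term $-\frac{p-1}{2}\chi(B)$ by $-\frac{p-1}{2}\chi(B)+(p-1)k$, so the right-hand side becomes $\chi+(p-1)k=\chi'$; since $e$ and $\alpha$ are unchanged, the right-hand side of Equation~{(\ref{eq:sigma})} is still $\sigma$. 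Hence the pair $(\sigma,\chi')$ satisfies Equations~{(\ref{eq:chi})} and~{(\ref{eq:sigma})} with respect to $(X,C,\alpha)$.

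Second, I would produce such a $C$ and build the cover. Iterating Lemma~\ref{genus_increase} a total of $k$ times, starting from $B$, yields a PL-embedded oriented surface $C$ of genus $g(B)+k$ with $e(C)=e(B)$; at each stage the fundamental group of the complement, and hence the homology class carried by the surface, is preserved, because the connected summand $(S^{4},T)$ used in Lemma~\ref{genus_increase} satisfies $[T]=0$ and only identifies two meridians. Applying the construction in the proof of Theorem~\ref{thm:sufficient} to $(X,C,\alpha)$ and the integers $(\sigma,\chi')$ then produces a four-manifold $Y'$ which is an irregular dihedral $p$-fold cover of $X$, branched over a surface homeomorphic to $C$ with a single singularity of type $\alpha$ and the same self-intersection as $C$; by Theorem~\ref{thm:necessary} it satisfies $\sigma(Y')=\sigma$ and $\chi(Y')=\chi'$. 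For the last clause, if $\pi_{1}(X-B,x_{0})\cong\mathbb{Z}/2\mathbb{Z}$ then Lemma~\ref{genus_increase} gives $\pi_{1}(X-C,x_{0})\cong\mathbb{Z}/2\mathbb{Z}$ as well, so Theorem~\ref{thm:sufficient} applies in its stated form and $Y'$ is simply-connected; when this hypothesis is dropped one instead invokes the version of the construction described in the Remark following Theorem~\ref{thm:sufficient}, which requires only that $[C]=[B]$ be an even class in $H_{2}(X;\mathbb{Z})$.

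I do not expect a serious obstacle: the corollary is essentially a bookkeeping consequence of machinery already in place. The two points that warrant a sentence of care are (i) checking that every hypothesis needed to run the construction of Theorem~\ref{thm:sufficient} survives the $k$-fold genus increase, which is immediate from Lemma~\ref{genus_increase}, and (ii) confirming that this construction is insensitive to replacing $B$ by a higher-genus surface carrying the same homology class and self-intersection number, which is clear because the cover is assembled from copies of the double branched cover of $(X,C)$ and a punctured copy of $X$ exactly as before. The genuinely substantive work, namely building a dihedral cover with a prescribed two-bridge slice singularity and computing its signature and Euler characteristic, was already done in Proposition~\ref{covers_of_spheres} and Theorem~\ref{thm:necessary}.
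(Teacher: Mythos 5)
Your proposal is correct and follows exactly the route the paper intends: the corollary is stated as an immediate consequence of the construction in Theorem~\ref{thm:construction-slice}, namely iterating Lemma~\ref{genus_increase} to lower $\chi(B)$ by $2k$ while fixing $e(B)$ and $\pi_1(X-B,x_0)$, and then running the construction of Theorem~\ref{thm:sufficient}. Your arithmetic check that this shifts the right-hand side of Equation~(\ref{eq:chi}) by $(p-1)k$ and leaves Equation~(\ref{eq:sigma}) unchanged is exactly the bookkeeping the paper leaves implicit.
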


We conclude by proving {Theorem~\ref{thm:odd_indefinite}}.

\begin{proof}[Proof of {Theorem~\ref{thm:odd_indefinite}}]
$(=>)$ If $Y$ is homeomorphic to a dihedral $p$-fold cover of $X$ with
the specified branching data, by {Theorem~\ref{thm:necessary}}, the Euler
characteristic and signature of~$Y$ satisfy Equations~{(\ref{eq:chi})}
and~{(\ref{eq:sigma})} with respect to $B_{1}$ and thus, by assumption, with
respect to $B$.

$(<=)$ Assume the Euler characteristic and signature of $Y$ satisfy
Equations~{(\ref{eq:chi})} and~{(\ref{eq:sigma})}. We will construct a branched
cover of $X$ whose branching set has the specified properties, and we
will prove that this cover is homeomorphic to $Y$.

We follow the steps used in the proof of {Theorem~\ref{thm:sufficient}}
to construct a $p$-fold irregular dihedral cover of $X$ branched over
a surface $B_{1}\cong B$ which is embedded in $X$ with a singularity of
type $\alpha $ and so that $e(B_{1})=e(B)$. Call this cover $Z$. Since
$\alpha $ is a two-bridge slice knot, by {Theorem~\ref{thm:sufficient}}, $Z$ is a simply-connected manifold. We will prove that the
intersection form of $Z$ is equivalent to that of $Y$.

Being a dihedral cover of $X$, $Z$ satisfies the equations set forth in
{Theorem~\ref{thm:necessary}}, where, again, $B$ and $B_{1}$ can be used
interchangeably. By assumption, $Y$ also satisfies these equations,
so $\sigma (Y) = \sigma (Z)$ and $\chi (Y)=\chi (Z)$. Since $Y$ is a
simply-connected four-manifold, the rank of $H_{2}(Y; \mathbb{Z})$ is
$\chi (Y) - 2$, and the analogous statement holds for $Z$. In other words, the intersection forms of $Y$ and $Z$ have the same
signature and rank. The intersection form of $Y$ is odd by assumption.
The intersection form of $Z$ is also odd because by construction $Z$ has
a copy of $X$ as a connected summand and $X$ itself is odd. Therefore,
the intersection forms of $Y$ and $Z$ have the same signature, rank and
parity. In particular, both are definite or both are indefinite. If both
forms are definite, since they arise as intersection forms of smooth
four-manifolds, by Donaldson's result~\cite{donaldson1983application},
each diagonalizes to $\pm I_{n}$, where $n=\chi (Y) - 2=\chi (Z) - 2$
and the sign determined by $\sigma (Y)=\sigma (Z)$. If both are
indefinite, we again conclude that they are isomorphic, this time using
Serre's classification~\cite{serre1961formes} of indefinite unimodular
integral bilinear forms. By~Freedman's classification of
simply-connected four-manifolds~\cite{freedman1982topology}, it follows that $Y$ and $Z$ are homeomorphic.
\end{proof}

\section*{Appendix A. Characteristic knots}\label{appA}
Our construction of an infinite family of irregular dihedral $p$-fold
covers of over a given four-manifold
({Theorem~\ref{thm:construction-slice}}) hinges on being able to find
two-bridge slice knots which admit dihedral $p$-fold covers themselves. In this section we prove
that, for any odd prime $p$, infinitely many such knots exist. In
particular, we exhibit for every $p$ an infinite class of knots for
which the necessary condition ({Theorem~\ref{thm:necessary}}) for the
existence of a dihedral $p$-fold cover over a given base is sharp. As a
biproduct, we also illustrate how to find characteristic knots in the
two-bridge case.

Recall that Lisca~\cite{lisca2007lens} proved that, for two-bridge
knots, being slice is equivalent to being ribbon. Previously, Casson and
Gordon~\cite{cass-gor1986cobordism} gave a necessary condition for a
two-bridge knot to be ribbon, and Lamm~\cite{lamm2000symmetric}
\cite{lamm2006symmetric} listed all knots satisfying this condition.
He found that for all $a\neq 0, b\neq 0$ the knots $K_{1}(a,b) =
C(2a,2,2b,-2,-2a,2b)$ and $K_{2}(a,b) = C(2a, 2, 2b, 2a, 2, 2b)$ are
ribbon. {Fig.~\ref{fig:slice_knot}} recalls the notation $C(e_{1}, ... , e_{6})$. In
{Fig.~\ref{fig:seifert_surface}} we give a genus $3$ Seifert surface $V$ for the knot
$\alpha = C(e_{1},e_{2},e_{3},e_{4},e_{5},e_{6})$. We use the surface
$V$ for all subsequent computations.

Since two-bridge slice knots play a key role our construction of
dihedral covers of four-manifolds, we determine the values of the
parameters $a$ and $b$ for which the knots $K_{i}(a, b)$ admit
three-fold dihedral covers.

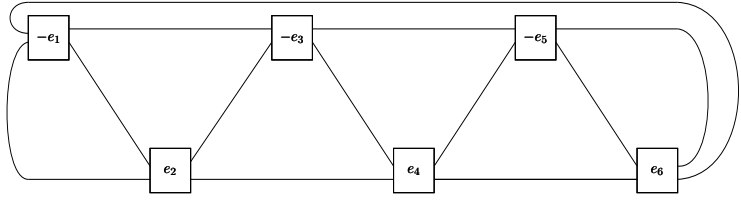
\begin{figure}
\resizebox{4in}{1in} {
\begin{tikzpicture}

\foreach \x in {1,3,5}
\foreach \y in {2,4,6} {
\draw (\x+\x+\x,3) +(-.5,-.5) rectangle ++(.5,.5); \draw (\x+\x+\x,3) node{$-e_{\x}$};
\draw (\y +\y+\y,0) +(-.5,-.5) rectangle ++(.5,.5); \draw (\y +\y+\y,0) node{$e_{\y}$};
}

\draw (3.5, 3.2)--(8.5,3.2); 
\foreach\z in {0, 6,12}  \draw [xshift=\z cm] (3.5, 2.9)--(5.5, 0.1); 
\draw (9.5, 3.2)--(14.5,3.2);
\draw (6.5, -0.2)--(11.5,-0.2);
\draw (12.5, -0.2)--(17.5,-0.2); 
\draw (12.5, -0.2)--(17.5,-0.2);
\draw (6.5, 0.2) -- (8.5, 2.9);/Users/sasheto/Desktop/branched covers/thesis stuff/twobridgeKNOT.tex
\draw[xshift=6cm] (6.5, 0.1) -- (8.5, 2.9);
\draw (2.5, 3.1) ..controls (1.9,3.1) and(1.9, 3.8)  .. (2.5, 3.8);
\draw (2.5, -0.2).. controls (1.8, -0.2) and (1.8,2.8) ..(2.5, 2.9);
\draw  (2.5, 3.8) -- (18.5, 3.8);
\draw (2.5, -0.2)--(5.5, -0.2);
\draw (18.5, 3.8)..controls (20.5, 3.7) and (20.5, -0.1).. (18.5,-0.2);
\draw (15.5,3.2)--(18.5, 3.2);
\draw (18.5, 3.2).. controls (19.5, 3.1) and (19.5,0).. (18.5, 0.1);

\end{tikzpicture}}
\caption{The knot $C(e_{1},...,e_{6})$. Each square represents a two-strand
braid with only positive or only negative horizontal twists, according to the
sign of $e_{i}$. The absolute value of $e_{i}$ gives the number of crossings.} \label{fig:slice_knot}
\vspace*{-2pt}
\end{figure}

\begin{figure}
\colorlet{darkred}{red!95!black}

\resizebox{5in}{1.5in} {
\begin{tikzpicture} 

\draw (0, 0.17) .. controls  (-2.5, 0.6) and (-2.5,3)     .. (0, 3.6); 
\draw (0, 3.6) .. controls (3.5, 4.5) and (10,4.5) ..  (13.5, 3.6);  
\draw (13.5, 0.17) .. controls  (16, 0.6)  and (16, 3)   .. (13.5, 3.6); 

\draw (1,0.17) .. controls (3,1.6) .. (5,0.17); 
\draw (6,0.17) .. controls (8,1.6) .. (10,0.17);  
\draw   (13.5, 0.53) .. controls (14.5, 1)  and  (13.6, 1.5) ..    (11, 0.17);
\draw (1.75,2.5) .. controls (3, 2.5) and (2 , 1)   .. (1, 0.53);

\begin{scope}[xshift=2.5cm]
 \draw   (2.5,0.53) .. controls (0.5,2) and (0.6,2.5) .. (1.75,2.5); 
\end{scope}

\begin{scope}[xshift=7.5cm]
 \draw   (2.5,0.53) .. controls (0.5,2) and (0.6,2.5) .. (1.75,2.5); 
\end{scope} 

\begin{scope}[xshift=10cm]
    \draw (1.75,2.5) .. controls (3, 2.5) and (2 , 1)   .. (1, 0.53);
\end{scope} 

\begin{scope}[xshift=5cm]
 \draw (1.75,2.5) .. controls (3, 2.5) and (2 , 1)   .. (1, 0.53);
\end{scope} 

\draw  (1.6,0.9) node [circle,  fill=white] {} ;
\draw (1.85,0.85)  node [circle, fill=white] {} ;
\draw (4.5,1) node [circle, fill=white]   {};
\draw (6.7,1) node [circle, fill=white]   {};
\draw  (9.3,1) node [circle, fill=white]  {};
\draw (11.6,1) node [rectangle, fill=white] {}  ;

\begin{scope} [xshift=-2.6cm, yshift=-0.33] 
       \begin{scope}[yscale=1,xscale=-1, xshift=-3cm]    
\draw[darkred,  very thick, densely dashed] [rotate=90] (0.15, 0) parabola bend (2.2, 2.2) (3 ,1) ; 
  \end{scope}  
   
 \draw [darkred, densely dashed]  [->, very thick] (2,3) .. controls (3, 3.2) .. (4,3)node [midway, above] {$\omega_1$}; 
    
 \begin{scope}[xshift=3cm]    
\draw[darkred,  very thick, densely dashed] [rotate=90] (0.15, 0) parabola bend (2.2, 2) (3 ,1) ;  
  \end{scope}    \end{scope}        
\draw (1.75,0.9) node [circle, fill=white] {} ;

\begin{scope} [xshift=2.5cm] 
       \begin{scope}[yscale=1,xscale=-1, xshift=-3cm]    
\draw[darkred,  very thick, densely dashed] [rotate=90] (0.15, 0) parabola bend (2.2, 2.2) (3 ,1) ; 
  \end{scope}  
   
 \draw [darkred, densely dashed]  [->, very thick] (2,3) .. controls (3, 3.2) .. (4,3)node [midway, above] {$\omega_3$};
    
 \begin{scope}[xshift=3cm]    
\draw[darkred,  very thick, densely dashed] [rotate=90] (0.15, 0) parabola bend (2.2, 2.2) (3 ,1) ;  
  \end{scope}    \end{scope}

    \begin{scope} [xshift=7.5cm] 
       \begin{scope}[yscale=1,xscale=-1, xshift=-3cm]    
\draw[darkred,  very thick, densely dashed] [rotate=90] (0.15, 0) parabola bend (2.2, 2.2) (3 ,1) ; 
  \end{scope}  
   
 \draw [darkred, densely dashed]  [->, very thick] (2,3) .. controls (3, 3.2) .. (4,3)node [midway, above] {$\omega_5$};
    
 \begin{scope}[xshift=3cm]    
\draw[darkred,  very thick, densely dashed] [rotate=90] (0.15, 0) parabola bend (2.2, 2.2) (3 ,1) ;  
  \end{scope}    \end{scope}

\begin{scope}[xshift=5cm]
 \draw  [ white , double=black , line width=4.44 , double distance =0.3pt]  (2.5,0.53) .. controls (0.5,2) and (0.6,2.5) .. (1.75,2.5); 
\end{scope}

\draw  [ white , double=black , line width=3.8 , double distance =0.3pt]  (0,0.53) .. controls (-1,1) and (0.6,1.5) .. (2.5,0.17);
\draw [ white , double=black , line width=4.44 , double distance =0.3pt] (3.5,0.17) .. controls (5.5,1.6) .. (7.5,0.17); 
\draw [ white , double=black , line width=4.44 , double distance =0.3pt] (8.5,0.17) .. controls (10.53,1.6) .. (12.5,0.17); 

\draw  [ white , double=black , line width=4.44 , double distance =0.3pt]  (2.5,0.53) .. controls (0.5,2) and (0.6,2.5) .. (1.75,2.5); 

\begin{scope}[yscale=1,xscale=-1, xshift = -6cm]
 \draw  [ white , double=black , line width=4.44 , double distance =0.3pt]  (2.5,0.53) .. controls (0.5,2) and (0.6,2.5) .. (1.75,2.5);
\end{scope} 

\begin{scope}[yscale=1,xscale=-1, xshift = -11cm]
 \draw  [ white , double=black , line width=4.44 , double distance =0.3pt]  (2.5,0.53) .. controls (0.5,2) and (0.6,2.5) .. (1.75,2.5);
\end{scope} 

\begin{scope}[xshift=5cm]
\draw  [ white , double=black , line width=4.44 , double distance =0.3pt]  (2.5,0.53) .. controls (0.5,2) and (0.6,2.5) .. (1.75,2.5);     
\end{scope} 
 
 \begin{scope}[xshift=10cm]
   \draw  [ white , double=black , line width=4.44 , double distance =0.3pt]  (2.5,0.53) .. controls (0.5,2) and (0.6,2.5) .. (1.75,2.5);
\end{scope} 

 \begin{scope}[yscale=1,xscale=-1, xshift=-3cm]    
\draw[darkred,  very thick, densely dashed] [rotate=90] (0.15, 0) parabola bend (2.2, 2.2) (3 ,1) ; 
  \end{scope}  
   
 \draw [darkred, densely dashed]  [->, very thick] (2,3) .. controls (3, 3.2) .. (4,3)node [midway, above] {$\omega_2$};
    
 \begin{scope}[xshift=3cm]    
\draw[darkred,  very thick, densely dashed] [rotate=90] (0.15, 0) parabola bend (2.2, 2.2) (3 ,1) ;  
  \end{scope}  

\begin{scope} [xshift=5cm] 
       \begin{scope}[yscale=1,xscale=-1, xshift=-3cm]    
\draw[darkred,  very thick, densely dashed] [rotate=90] (0.15, 0) parabola bend (2.2, 2.2) (3 ,1) ; 
  \end{scope}  
   
 \draw [darkred, densely dashed]  [ ->, very thick]  (2,3) .. controls (3, 3.2) .. (4,3) node [midway, above] {$\omega_4$};
    
 \begin{scope}[xshift=3cm]    
\draw[darkred,  very thick, densely dashed] [rotate=90] (0.15, 0) parabola bend (2.2, 2.2) (3 ,1) ;  
  \end{scope}  
  \end{scope}

  \begin{scope} [xshift=10cm] 
       \begin{scope}[yscale=1,xscale=-1, xshift=-3cm]    
\draw[darkred,  very thick, densely dashed] [rotate=90] (0.15, 0) parabola bend (2.2, 2.1) (3 ,1) ; 
  \end{scope}  
   
 \draw [darkred, densely dashed]  [ ->, very thick]  (2,3) .. controls (3, 3.2) .. (4,3) node [midway, above] {$\omega_6$};
    
 \begin{scope}[xshift=3cm]    
\draw[darkred,  very thick, densely dashed] [rotate=90] (0.15, 0) parabola bend (2.2, 2.2) (3 ,1) ;  
  \end{scope}  
  \end{scope}

\draw (0,0) rectangle +(1,0.7)   (2.5,0) rectangle +(1,0.7)   (5, 0) rectangle +(1,0.7)   (7.5,0) rectangle +(1,0.7)  (10, 0) rectangle +(1,0.7)  (12.5, 0) rectangle +(1,0.7)  ;
\fill[white] (0,0) rectangle +(1,0.7)   (2.5,0) rectangle +(1,0.7)   (5, 0) rectangle +(1,0.7)   (7.5,0) rectangle +(1,0.7)  (10, 0) rectangle +(1,0.7)  (12.5, 0) rectangle +(1,0.7)  ;
\draw (0.53,0.35) node {$-e_1$} (3,0.35) node {$e_2$} (5.5, 0.35) node {$-e_3$}  (8,0.35) node {$e_4$} (10.53, 0.35) node {$-e_5$}  (13, 0.35) node {$e_6$};


\end{tikzpicture} }
\caption{A Seifert surface for the knot $C(e_{1},...,e_{6})$, together with the
set of preferred generators for its first homology group.}
\label{fig:seifert_surface}
\vspace*{-6pt}
\end{figure}
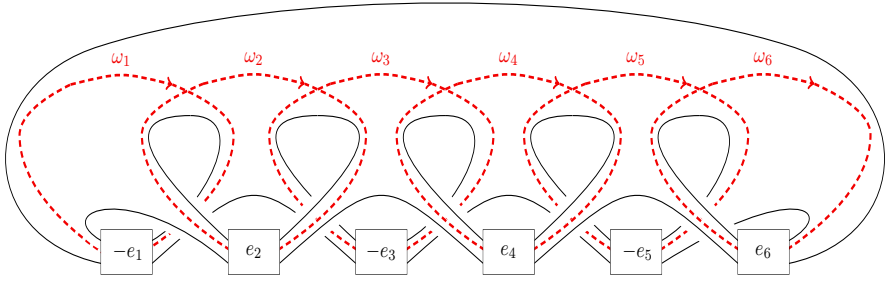

\begin{prop}\label{characteristic_knots_p=3}
A knot of the type $K_{1}(a, b)$ admits an irregular three-fold dihedral
cover if and only if

(1) $a\equiv 0 \mod 3, b\equiv 2 \mod 3$ or

(2) $a\equiv 1 \mod 3, b\equiv 1 \mod 3$.
\goodbreak

A knot of the type $K_{2}(a, b)$ admits an irregular 3-fold dihedral
cover if and only if

(3) $a\equiv 0 \mod 3, b\equiv 1 \mod 3$ or

(4) $a\equiv 1 \mod 3, b\equiv 0 \mod 3$.

In these cases, a curve representing the class $\beta \in H_{1}(V;
\mathbb{Z})$ is a mod $3$ characteristic knot for the corresponding
$K_{i}(a,b)$ if and only if there is a choice of orientation on
$\beta $ such that, with respect to the basis $\{ \omega _{1},\omega
_{2},\omega _{3}, \omega _{4},\omega _{5},\omega _{6} \}$, we have,
respectively,

(1) $[\beta ]\equiv (1, 0, 1, 1, -1, 1) \mod 3$,

(2) $[\beta ]\equiv (-1, 1, 1, 0, 1, 1)\mod 3$,

(3) $[\beta ]\equiv (1, 0, 1, -1, 1, 1)\mod 3$,

(4) $[\beta ]\equiv (-1, 1, 1, 1, 0, 1)\mod 3$.
\end{prop}

\begin{proof}
Let $V$ denote the Seifert surface for $C(e_{1},e_{2},e_{3},e_{4},e
_{5},e_{6})$ depicted in {Fig.~\ref{fig:seifert_surface}}. We think of the $e_{i}$ as being
chosen so that the knot $C(e_{1},e_{2},e_{3},e_{4},e_{5},e_{6})$ is of
type $K_{1}(a, b)$ or $K_{2}(a, b)$. Let $L$ denote the matrix of the
linking form on $V$ with respect to the basis $\{ \omega _{1},\omega
_{2},\omega _{3}, \omega _{4},\omega _{5},\omega _{6} \}$. The symmetrized
linking form for $V$ in this basis is $L_{V} = L + L^{T}$. It equals:
\vspace*{-3pt}
\[
\left (
\begin{array}{c@{\quad }c@{\quad }c@{\quad }c@{\quad }c@{\quad }c}
-e_{1} & 1 & 0 & 0 & 0 & 0
\\
1 & e_{2} & -1 & 0 & 0 & 0
\\
0 & -1 & -e_{3} & 1 & 0 & 0
\\
0 & 0 & 1 & e_{4} & -1 & 0
\\
0 & 0 & 0 & -1 & -e_{5} & 1
\\
0 &0 & 0 & 0 & 1 & e_{6}
\end{array}
\right )
\]

It is sufficient to check that $\det (L + L^{T})\equiv 0 \mod 3$
precisely in situations $(1),\allowbreak ...,(4)$. For instance, in the case
$C(e_{1},e_{2},e_{3},e_{4},e_{5},e_{6}) = K_{1}(a, b)$, we obtain
$\det (L + L^{T}) = -(8ab+2b-1)^{2}$. So we need to solve the equation
\begin{equation*}
8ab+2b-1\equiv 0\mod 3.
\end{equation*}
If $a\equiv 0$~mod~3, the equation reduces to $2b-1\equiv 0$~mod~3, so
$b\equiv 2$. If $a\equiv 1$~mod~3, then $b\equiv 1$~mod~3. If
$a\equiv 2$~mod~3, there is no solution. The computations for
$K_{1}(a, b)$ are equally trivial, so they are omitted.

To verify that the classes $[\beta ]\in H_{1}(V; \mathbb{Z})$ listed
represent all characteristic knots, it suffices to check that, for
$a$, $b$ and $\beta$ as specified, we have $(L + L^{T})\beta \equiv 0\mod 3$ and
moreover that the classes $\beta $ are the unique solutions mod~3 for each pair $(a, b)$. The
arithmetic involved has been left out.
\end{proof}

More generally, we have the following:

\begin{prop}
\label{slice_knots_have_covers}
Let $p>1$ be an odd prime. There exits an infinite family of integer
pairs $(a, b)$ such that the two-bridge slice knot $K_{1}(a, b)\subset
S^{3}$ admits an irregular dihedral $p$-fold cover, and similarly for
$K_{2}(a, b)$.
\end{prop}

\begin{proof}
The case $p=3$ was treated in
{Proposition~\ref{characteristic_knots_p=3}}, so assume $p>3$. The
determinant $D_{1}(a, b)$ of the Seifert matrix of the knot
$K_{1}(a, b)$ is equal to $-(8ab+2b-1)^{2}$. Setting $a\equiv 0
\mod p$, we find that $D_{1}(a, b)\equiv 0\mod p$ if and only if
$2b\equiv 1\mod p$. Since $p$ is odd, a solution exists. Another pair
of solutions is $a\equiv 8^{-1}\mod p$ and $b\equiv 3^{-1}\mod p$.

Similarly, we find that the determinant $D_{2}(a, b)$ of the Seifert
matrix of the knot $K_{2}(a, b)$ is $(8ab+2a+2b+1)^{2}$. Setting
$b\equiv -1\mod p$, we find that $a (-6)\equiv{1} \mod p$. For $p>3$,
this gives a solution.
\end{proof}

For any given $p$ and any family of two-bridge slice knots $K_{i}(a, b)$
with $a$ and $b$ chosen so that $\det (L + L^{T})\equiv 0 \mod p$, the
classes in $H_{1}(V; \mathbb{Z})$ represented by characteristic knots
are easily computed as in {Proposition~\ref{characteristic_knots_p=3}} by
solving a system of equations mod~$p$. One can see by direct examination
that if $p=3$ each of these homology classes can be realized by an unknot embedded in the interior of $V$. The
same methods can be used to find knot types of characteristic knots for
all $p$.%

\section*{Appendix B. Computing linking numbers in branched covers}\label{appB}

Let $\alpha \subset S^{3}$ be a knot, and let $f: M\to S^{3}$ be a cover
branched along $\alpha $, arising from a presentation $\psi : \pi _{1}(S
^{3}-\alpha , x_{0})\to S_{n}$. The linking numbers (when defined)
between the various components of $f^{-1}(\alpha )$ constitute a subtle
knot invariant studied extensively by Hartley and Murasugi
\cite{hartley1977covering}, Bankwitz and Schumann
\cite{bankwitz1934viergeflechte}, Laufer \cite{laufer1971some} and
Perko \cite{perko1974classification}, 
among others. Further applications of linking numbers in dihedral covers of knots were found by Cappell and Shaneson~\cite{CS1975invariants} and Litherland~\cite{litherland1980formula}.

In his undergraduate thesis \cite{perko1964thesis}, Perko detailed
a procedure, going back to Reidemeister~\cite{reidemeister1929knoten}, for computing linking numbers between branch curves. His
method is, to this day, the most efficient and general algorithm known
for computing these numbers. We give a very short summary of this classical 
method for computing linking numbers in a branched cover. We intend to
provide just enough detail to be able to describe a generalization of
these ideas which will allow us to calculate the linking numbers of other
curves, as needed for evaluating the component of $\Xi _{p}(\alpha )$
which is expressed in terms of linking. Readers interested in the
specifics needed to carry out the procedure can find them in~\cite{perko1964thesis} or~\cite{cahn2016linking}.

\medskip
\textit{Perko's procedure for computing linking numbers between branch
curves in a branched cover $f: M\to S^{3}$ with branching set
$\alpha $:}
\begin{enumerate}
\item
Use a diagram for $\alpha $ to endow $S^{3}$ with a cell structure. The
two-skeleton is the cone on $\alpha $, and there is a single
three-cell.
\item
Endow the cover $M$ with a cell structure as follows. The cells are the
pre-images $f^{-1}(e_{k}^{j})$ of the various cells in $S^{3}$. The
attaching maps are determined by the action of the meridians of
$\alpha $ on the interiors of the cells.
\item
Compute the boundaries of all two-cells of $M$. This step is non-trivial
for two-cells whose boundary contains one-cells corresponding to
over-arcs in the knot diagram. Such two-cells will accrue
additional boundary components determined by the action of meridians of
$\alpha $ on the three-cells.
\item
Solve a system of linear equations to determine, for each component
$\alpha _{i}$ of $f^{-1}(\alpha )$, a~two-chain with boundary
$\alpha _{i}$, if such a two-chain exists.
\item
For each pair $(\alpha _{i}, \alpha _{j})$, examine the signed
intersection numbers of $\alpha _{i}$ with a two-chain, found in (4),
whose boundary is $\alpha _{j}$. This gives $lk(\alpha _{i}, \alpha _{j})$.
We remark that, in practice, the intersection number of any one-cell with
any two-cell is trivial to read off from the data examined in order to
complete (3), so this final step of the computation poses no
difficulty.
\end{enumerate}

In order to compute the linking numbers of other curves in $M$, we
introduce an appropriate subdivision of the cell structure described above. Consider a curve $\gamma \subset (S^{3}-\alpha )$ whose lifts to
$M$ are of interest. We use the cone on $\alpha \cup \gamma $ to form
the two-skeleton of $S^{3}$. In order to lift this new cell
structure to a cell structure on $M$, we treat $\gamma $ as a
``pseudo-branch curve'' of the map $f$. That is, we think of the
homomorphism $\pi _{1}(S^{3}-\alpha )\to S_{n}$ as a homomorphism
$\pi _{1}(S^{3}-(\alpha \cup \gamma ))\to S_{n}$ in which meridians of
$\gamma $ map to the trivial permutation. Naturally, this can be done
for multiple curves $\gamma _{i}$ simultaneously. In this set up,
linking numbers can be computed by following steps (3), (4) and (5)
above. The above procedure is carried out in~\cite{cahn2016linking}, and
a computer algorithm for performing linking number calculations is
provided. \\

\textbf{Acknowledgements. }  The author is deeply indebted to her PhD
adviser, Julius Shaneson, for his support, encouragement and insight.
Thanks to humor, my rock. Mohamed-Ali Belabbas was also a sustained
source of helpful conversations. The anonymous referee made many
valuable suggestions. \\

\bibliographystyle{amsplain}
\bibliography{BrCovArt}

\end{document}